\newtheorem{Definition}{Definition}[section]
\newtheorem{thm}{Theorem}[section]
\newtheorem{pro}{Proposition}[section]
\newtheorem{lem}{Lemma}[section]
\newtheorem{cor}{Corollary}[section]
\theoremstyle{remark}
\newtheorem{exam}{Example}
\newtheorem{remark}{Remark}[section]
\begin{document}
\begin{frontmatter}
\title{On two problems about isogenies of elliptic curves\\ over finite fields}
\author[CAS,UCAS]{Lixia Luo}\ead{luolixia@amss.ac.cn}
\author[CAS,UCAS]{Guanju Xiao}\ead{gjXiao@amss.ac.cn}
\author[CAS,UCAS]{Yingpu Deng}
\ead{dengyp@amss.ac.cn}
\address[CAS]{Key Laboratory of Mathematics Mechanization, NCMIS, Academy of Mathematics and Systems Science, Chinese Academy of Sciences, Beijing 100190, People's Republic of China}
\address[UCAS]{School of Mathematical Sciences, University of Chinese Academy of Sciences, Beijing 100049, People's Republic of China}

\begin{abstract}
	 Isogenies occur throughout the theory of elliptic curves. Recently, the cryptographic protocols based on isogenies are considered as candidates of quantum-resistant cryptographic protocols. Given two elliptic curves $E_1, E_2$ defined over a finite field $k$ with the same trace, there is a nonconstant isogeny $\beta$ from $E_2$ to $E_1$ defined over $k$. This study  gives out the index of $\rm{Hom}_{\it k}(\it E_{\rm 1},E_{\rm 2})\beta$ as a left ideal in $\rm{End}_{\it k}(\it E_{\rm 2})$ and figures out the correspondence between isogenies and kernel ideals. In addition, some results about the non-trivial minimal degree of isogenies between the two elliptic curves are also provided.
\end{abstract}
\begin{keyword}
 Elliptic curve, Isogeny, Kernel ideal, Minimal degree.
\end{keyword}
\end{frontmatter}

\bibliographystyle{plain}

\section{Introduction}
Isogenies play an important part in  the theory of elliptic curves. A recent research area is cryptographic protocols based on the difficulty of constructing isogenies between elliptic curves over finite fields\cite{ azarderakhsh2017supersingular,charles2009cry,galbraith2017identification}. These cryptographic protocols are supposed to resist the quantum computations. To get more facts about isogenies, this paper concerns two problems related to isogenies.

Let $F$ be a perfect field, and $E_1, E_2$ be elliptic curves defined over $F$, it had been proved that $\rm{Hom}(\it E_{\rm 1},\it E_{\rm 2})$ is a free $\mathbb Z$-module of rank at most $4$ \cite[Corollary \uppercase\expandafter{\romannumeral3}.7.5]{silverman2009arithmetic}. Further, the possible ranks of $\rm {End}(\it E_{\rm 1})$ (or $\rm {End}(\it E_{\rm 2})$) are $1,2,4$. The possible result that rank$_{\mathbb{Z}}$ $\rm{Hom}(\it E_{\rm 1},\it E_{\rm 2})=\rm3$ is proved to be negative. If there is a nonconstant isogeny $\beta$ from $E_2$ to $E_1$, $\rm{Hom}(\it E_{\rm 1},\it E_{\rm 2})\beta$ is a left ideal of $\rm {End}(\it E_{\rm 2})$, then $\rm{rank}_{\mathbb{Z}}\rm{Hom}(\it E_{\rm 1},\it E_{\rm 2})=\rm{rank}_{\mathbb{Z}}\rm {End}(\it E_{\rm 2})$. The index of $\rm{Hom}(\it E_{\rm 1},\it E_{\rm 2})\beta$ in $\rm {End}(\it E_{\rm 2})$ is finite, but the exact result needs to be identified. We assume that the characteristic of $F$ is not $0$ and for the cases $\rm{char}(\it F) = \rm 0$, we discuss them in Appendix A.

In Waterhouse's thesis\cite{waterhouse1969abe}, he introduced the concept of kernel ideals and proved that for any elliptic curve $E$ over a finite field, the left ideals of its endomorphism ring are all kernel ideals. Every such left ideal can induce an isogeny from $E$, and ideal multiplication corresponds to isogeny composition.  Kohel proved the correspondence between the invertible ideals and the isogenies between two ordinary elliptic curves with the same endomorphism type and
used the invertible ideals of imaginary quadratic orders to compute the endomorphism types of ordinary elliptic curves over finite fields\cite{kohel1996endomorphism}. In this paper, we explore the index of $\rm{Hom}_{\it k}(\it E_{\rm 1},E_{\rm 2})\beta$ as a left ideal in $\rm{End}_{\it k}(\it E_{\rm 2})$ for any nonconstant isogeny $\beta$ from $E_2$ to $E_1$ defined over a finite field $k$ as the first problem.  For ordinary elliptic curves, not all isogenies can correspond to kernel ideals. We will give out a way to judge whether the isogenies correspond to kernel ideals in this case. Besides, it is natural for us to consider the non-trivial minimal degree of isogenies between any two elliptic curves over finite fields as the second problem.

The paper is organized as follows. In Section 2, we provide the preliminaries on elliptic curves, isogenies, endomorphism rings and kernel ideals. The answer to the first problem will be given out by Theorems 3.1 and 3.2 in Section 3. And the study of the second problem can be found in Section 4.

\section{Preliminaries}
\subsection{Elliptic curves and isogenies}
Let $k$ be a finite field of characteristic $p$ and let $E$ be an elliptic curve defined over $k$. $E$ can be written in a generalised Weierstrass equation
\begin{equation*}
  E:y^2+a_{\rm 1}xy+a_3y=x^3+a_{\rm 2}x^2+a_4x+a_6.
\end{equation*}
For simplicity, if $p>3$, up to $k$-isomorphism, $E$ can be written as the short Weierstrass form
\begin{equation*}
  E:y^2=x^3+Ax+B
\end{equation*}
with its discriminant $4A^3+27B^2$ not equal to 0.  Under this form the $j$-invariant of $E$ is defined as $1728(4A^3)/(4A^3+27B^2)$, which is an invariant up to $\bar{k}$-isomorphism. $E(k)$ is a finite abelian group with unity $\infty$ under the chord and tangent addition law \cite{washington2008elliptic}.

Let $E_{\rm 1}$ and $E_{\rm 2}$ be two elliptic curves defined over $k$. An isogeny  $\alpha$ from $E_{\rm 1}$ to $E_{\rm 2}$ is a morphism mapping $\infty$ to $\infty$. If $\alpha(E_{\rm 1})=\infty$, $\alpha$ is called constant and denoted by 0. Its degree is defined to be $0$. Otherwise, $\alpha$ can be represented as $(r_{\rm 1}(x),r_{\rm 2}(x)y)$ where $r_{\rm 1}(x)$, $r_{\rm 2}(x)$ are rational functions and we can write $r_{\rm 1}(x)=p(x)/q(x)$ with polynomials $p(x)$ and $q(x)$ that don't have a common factor and $q(x)\ne 0$. The degree of $\alpha$ is defined to be
\begin{equation*}
  \rm{deg}(\alpha)=\rm{max}\{\rm{deg}\it p(x),\rm{deg}\it q(x)\}.
\end{equation*} $\alpha$ is called separable if $(r_{\rm 1}(x))'\ne 0$. All nonconstant isogenies are surjective homomorphisms with finite kernels and $\vert \rm{ ker} (\alpha)\vert = \rm{deg}\alpha$ if $\alpha$ is separable \cite{washington2008elliptic}. According to \cite[Proposition \uppercase\expandafter{\romannumeral3}.4.12]{silverman2009arithmetic}, the nonconstant separable isogenies from a given elliptic curve can be distinguished by their kernels. For every nonconstant isogeny $\beta: E_{\rm 2}\to E_{\rm 1}$ of degree $m$, there is a unique dual isogeny $\widehat{\beta}: E_{\rm 1}\to E_{\rm 2}$ satisfying $\widehat{\beta}\beta=[m]_{E_2}$ and $\beta\widehat{\beta}=[m]_{E_1}$ where $[m]$ is the multiplication-by-m map of degree $m^2$ and $\rm{deg}\widehat{\beta}=\it m$ \cite[Theorem \uppercase\expandafter{\romannumeral3}.6.1]{silverman2009arithmetic}.
For the properties of dual isogenies, we can refer to  \cite[Theorem \uppercase\expandafter{\romannumeral3}.6.2]{silverman2009arithmetic}. All the isogenies we consider in the following are nonconstant.

Let $\rm{Hom}(\it{E}_{\rm{1}},\it{E}_{\rm{2}})$ denote the collection of all isogenies from $E_{\rm 1}$ to $E_{\rm 2}$. Similarly, $\rm{Hom}_{\it{k}}(\it{E}_{\rm{1}},\it{E}_{\rm{2}})$ can be defined for the isogenies defined over $k$. If $E_{\rm 1}=E_{\rm 2}$, the isogenies are called endomorphisms. All endomorphisms of an elliptic curve $E$ form a ring and it is called the endomorphism ring of $E$ and denoted by $\rm{End}(\it{E})$. The endomorphism ring defined over $k$ is denoted by  $\rm{End}_{\it{k}}(\it{E})$.

Let $q=\vert k\vert $ and $t=q+1-\vert E(k) \vert $. Then the Frobenius endomorphism $\phi_{q, E}:
(x,y)\mapsto(x^q,y^q)$ of $E$ related to $k$ has characteristic polynomial  $\mathit h_{q,E} :=x^2-tx+q$. $t$ is called the trace of $\phi_{q, E}$ or $E$ over $k$. By Hasse's theorem, we have $t^2-4q\leqslant 0$. Let $\pi_{q,E}$ be a root of $\mathit h_{q,E}$. If there is a nonconstant isogeny from $E_{\rm 1}$ to $E_{\rm 2}$ defined over $k$, $E_{\rm 1}$ and $E_{\rm 2}$ are called $k$-isogenous. It is well known that  $E_{\rm 1}$ and $E_{\rm 2}$ are $k$-isogenous if and only if $\mathit h_{q,E_{\rm 1}}=\mathit h_{q,E_{\rm 2}}$ if and only if $\vert E_{\rm 1}(k)\vert=\vert E_{\rm 2}(k)\vert$ \cite[Theorem 1]{tate1966end}. All the $k$-isogenous elliptic curves form an isogeny class over $k$.

\subsection{Endomorphism rings and kernel ideals}

An elliptic curves $E$ is called supersingular if $p\mid t$ and ordinary, otherwise. As we all know, $\rm{End}(\it{E})$ is isomorphic to an imaginary quadratic order containing $\pi_{q,E}$ when $E$ is ordinary or a maximal order of a definite quaternion algebra $B_{p,\infty}$ over $\mathbb Q$ ramified at $p$ and $\infty$  when $E$ is supersingular\cite[Theorem \uppercase\expandafter{\romannumeral5}.3.1]{silverman2009arithmetic}. According to Waterhouse's thesis\cite[Theorem 4.2]{waterhouse1969abe}, $\rm{End}_{\it{k}}(\it{E})$ is equal to $\rm{End}(\it{E})$ when $p\nmid t$ or $t= \pm 2\sqrt{q}$ and for other cases, $\rm{End}_{\it{k}}(\it{E})$ is isomorphic to an imaginary quadratic order which is ramified at $p$ containing $\pi_{q,E}$. Then $\rm{End}(\it{E})($$\rm{End}_{\it{k}}(\it{E}))\otimes \mathbb Q$ is isomorphic to an imaginary quadratic field or $B_{p,\infty}$. In general, we will denote the field or the algebra by $K$, and we call it endomorphism algebra. The isogenous elliptic curves have the same endomorphism algebra.

Any definite quaternion algebra over $\mathbb Q$ has a representation of the form $\left(\frac{a,b}{\mathbb Q}\right)=\mathbb Q +\mathbb Q i+\mathbb Q j+\mathbb Q k$ where $i^2=a, j^2=b, k=ij=-ji$. Define the conjugation of the elements of $\left(\frac{a,b}{\mathbb Q}\right)$ given by
\begin{equation*}
  \alpha=x+yi+zj+wk \mapsto \bar{\alpha}=x-yi-zj-wk,
\end{equation*}
from which the reduced trace and norm take the form
\begin{equation*}
  \rm{Trd}(\alpha)=\alpha+\bar{\alpha}=2\it x\    \rm{and}\    \rm{Nrd}(\alpha)=\alpha\bar{\alpha}=\it x^{\rm 2}-ay^{\rm2}-bz^{\rm2}+abw^{\rm2}.
\end{equation*}
For simplicity, we use the same symbols for the trace and norm of imaginary quadratic fields. Let $i_{E}: K \to $ $\rm{End}(\it{E})\otimes \mathbb Q$ be an isomorphism, then every endomorphism $\alpha$ and its dual satisfy the characteristic polynomial $x^2-\rm{Trd}$$(i_{E}^{-1}(\alpha))x+\rm{Nrd}$$(i_{E}^{-1}(\alpha))$.
Under this isomorphism, the dual of every endomorphism corresponds to the conjugate of the corresponding element in the imaginary quadratic field or the conjugation of the corresponding element in $B_{p,\infty}$.  And the degree corresponds to the field norm of the imaginary quadratic field or the reduced norm of $B_{p,\infty}$.

Let $\beta:E_2\to E_1$ be an isogeny defined over $k$. Let $i_{E_{\rm 2}}: K \to $ $\rm{End}_{\it{k}}(\it{E}_{\rm{2}})\otimes \mathbb Q$ be an isomorphism and fix $i_{E_{\rm 2}}(\pi_{q,E_{\rm 2}})=\phi_{q,E_{\rm 2}}$, then we can induce
\begin{equation*}
\begin{split}
     i_{E_{\rm 1}}: K & \to  \rm{End}_{\it{k}}(\it{E}_{\rm{1}})\otimes \mathbb Q  \\
     \zeta & \mapsto \frac{1}{\rm{deg}(\beta)} \beta \it{i}_{E_{\rm 2}}(\zeta)\widehat {\beta}
  \end{split}
\end{equation*}
where $\widehat {\beta}$ is the dual isogeny of $\beta$ and $i_{E_{\rm 1}}^{-1}(\phi_{q,E_{\rm 1}})=i_{E_{\rm 2}}^{-1}(\phi_{q,E_{\rm 2}})$. Hence under these two isomorphisms, we have
\begin{equation*}
i_{E_{\rm 2}}^{-1}(\rm{Hom}_{\it{k}}(\it{E}_{\rm{1}},\it{E}_{\rm{2}})\beta)=i_{E_{\rm 1}}^{-\rm1}(\beta \rm{Hom}_{\it{k}}(\it{E}_{\rm{1}},\it{E}_{\rm{2}})).
\end{equation*}
In the following, for simplicity, we take $\rm{End}_{\it{k}}(\it{E})$ as an order of $K$ and take $\rm{Hom}_{\it{k}}(\it{E}_{\rm{1}},\it{E}_{\rm{2}})\beta$ as $i_{E_{\rm 2}}^{-1}($$\rm{Hom}_{\it{k}}(\it{E}_{\rm{1}},\it{E}_{\rm{2}})\beta)$, then $\rm{Hom}_{\it{k}}(\it{E}_{\rm{1}},\it{E}_{\rm{2}})\beta=\beta $$\rm{Hom}_{\it{k}}(\it{E}_{\rm{1}},\it{E}_{\rm{2}}))$ is a left ideal of  $\rm{End}_{\it{k}}(\it{E}_{\rm{2}})$  and a right ideal of  $\rm{End}_{\it{k}}(\it{E}_{\rm{1}})$.  Since $i_{E_{\rm 2}}^{-1}(\widehat \alpha)$ is equal to $\overline{i_{E_{\rm 2}}^{-1}(\alpha)}$, then \begin{equation*}
  i_{E_{\rm 1}}^{-1}(\rm{Hom}_{\it{k}}(\it \it E_{\rm 2},\it \it E_{\rm 1})\widehat{\beta})= i_{E_{\rm 2}}^{-\rm1}(\widehat{\beta}\rm{Hom}_{\it{k}}(\it \it E_{\rm 2},\it \it E_{\rm 1}))=\overline{i_{E_{\rm 2}}^{-\rm1}(\rm{Hom}_{\it{k}}(\it \it E_{\rm 1},\it \it E_{\rm 2})\beta)}.
 \end{equation*}

We now investigate kernel ideals and their relations with isogenies. The main references are Waterhouse's thesis\cite{waterhouse1969abe} and \cite[Chapter 42.2]{voight2019quaternion}. For an isogeny $\beta$ from an elliptic curve $E$ defined over $k$, let $\rm{H}(\beta)$ be its kernel as a finite group scheme over $k$. Define the rank of a finite group scheme $H$ as dim$_{k}k[H]$. And rank H$(\beta)$=deg$\beta$. Give an non-zero ideal $J$ of $ \rm{End}_{\it{k}}(\it{E})$, let $\rm{H}(\it{J})$ be the scheme-theoretic intersection of the kernels of all elements of $J$. Let $ \rm{I}(\it H):=\{\alpha \in \rm{End}_{\it{k}}(\it{E}) :  \alpha(\it H)=\infty\}$. $J$ is called kernel ideal if $J = \rm{I}(\rm{H}(\it{J}))$. In fact, all the ideals considered in this paper are kernel ideals due to \cite{waterhouse1969abe}.

 \begin{pro}
   Given an elliptic curve $E$ over finite field $k$.
   \begin{enumerate}[1)]
     \item If $\rm{End}(\it E)$ is isomorphic to a maximal order of quaternion algebra, then every ideal $I$ of $\rm{End}(\it E)$ is a kernel ideal, and $\rm{rank}\ H(\it I)$ is equal to the reduced norm $\rm{Nr}$$(I)$.
     \item If $\rm{End}(\it E)$ is isomorphic to an imaginary quadratic order, then every ideal $I$ of $\rm{End}(\it E)$ is a kernel ideal, and if $I$ is invertible, then $\rm{rank}\ H(\it I)$ is equal to the norm of $I$.

   \end{enumerate}

 \end{pro}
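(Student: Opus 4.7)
The plan is to follow Waterhouse's thesis and Voight's textbook (both cited above), proving each assertion by reducing to a local computation at every rational prime. The common principle is: if a left ideal $I$ of $\rm{End}(E)$ is locally principal at $\ell$, with local generator $\alpha_\ell$, then the $\ell$-primary part of the kernel scheme satisfies $H(I)[\ell^\infty]=\ker(\alpha_\ell)$, hence has rank $\deg\alpha_\ell=\rm{Nrd}(\alpha_\ell)$, and moreover $I_\ell=I(H(I))_\ell$ automatically. Multiplying over all primes then yields both $I=I(H(I))$ and the rank/norm identity simultaneously.

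For part (1), since $\rm{End}(E)$ is a maximal order in $B_{p,\infty}$, a standard fact about maximal orders in quaternion algebras guarantees that every one-sided ideal is locally principal at each $\ell$, so the local argument applies uniformly. At primes $\ell\neq p$ I would realize the local generator on the $\ell$-adic Tate module $T_\ell E$ and read off the kernel rank directly from $\rm{Nrd}(\alpha_\ell)$. At $\ell=p$ I would replace $T_\ell E$ by the Dieudonn\'e module of the $p$-divisible group $E[p^\infty]$ and repeat the computation, assembling the local ranks into $\rm{rank}\ H(I)=\rm{Nr}(I)$.

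For part (2), the same local-principal argument handles invertible ideals in the quadratic order, since invertibility of $I$ is equivalent to principality of every localization $I_\ell$; this immediately gives $\rm{rank}\ H(I)=\rm{N}(I)$. For general (possibly non-invertible) ideals, the equality $I=I(H(I))$ still holds, but one needs the more delicate argument of Waterhouse treating the conductor prime separately, using the explicit description of the local order at the conductor together with the structure of $E[\ell^\infty]$ there. Crucially, the rank-versus-norm identity can fail for non-invertible $I$, which is exactly why the proposition restricts the norm formula to invertible ideals.

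The main obstacle is the case $\ell=p$ in the supersingular setting of (1): $T_p E$ is trivial, so the kernel-rank computation must be carried out on the Dieudonn\'e side, and maximality of $\rm{End}(E)$ is precisely what supplies the local-principality input at $p$. In case (2), the analogous technical point is controlling $H(I)$ at the conductor prime when $I$ is not invertible, which is why the rank formula there is stated only under invertibility.
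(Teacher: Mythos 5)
Your proposal is correct and is essentially the same approach the paper takes: the paper simply cites Waterhouse's Theorems 3.15 and 4.5, and Waterhouse's proof is exactly the prime-by-prime local argument you sketch (Tate modules away from $p$, Dieudonn\'e modules at $p$, local principality of one-sided ideals of a maximal order). You also correctly capture the paper's one added remark, namely that the rank/norm formula extends to invertible ideals in a non-maximal quadratic order because invertibility is equivalent to local principality, so the same local argument applies verbatim.
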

 \begin{proof}
   See\cite[Theorems 3.15 and 4.5]{waterhouse1969abe}. If $\rm{End}(\it E)$ is not maximal, for every invertible ideal $I$, it is similar to the proof of Theorem 3.15 where Waterhouse just proved for the case when $\rm{End}(\it E)$ is maximal that $\rm{rank}\ H(\it I)$ is equal to the norm of $I$.
 \end{proof}

 Waterhouse also proved that ideal multiplication corresponds to composition of isogenies.
 \begin{pro}
 Let $I$ be a left ideal of the endomorphism ring End$_k(E)$ , $\varphi_I:E\to E/\rm{H}(\it I)=E'$ be the isogeny with kernel H$(I)$. Let $J$ be a left ideal of End$_k(E')$,  $\varphi_J:E'\to E'/\rm{H}(\it J)=E''$. Then $\varphi_J\varphi_I$ is the isogeny from $E$ to $E''$ with kernel H$(IJ)$.
 \end{pro}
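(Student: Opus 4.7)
The strategy is to match the two finite subgroup schemes of $E$, namely $\ker(\varphi_J\varphi_I)$ and $\mathrm{H}(IJ)$, by first checking they have the same rank and then establishing one inclusion.

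First, $\ker(\varphi_J\varphi_I)=\varphi_I^{-1}(\mathrm{H}(J))$ has rank $\deg(\varphi_I)\cdot\deg(\varphi_J)$ as a preimage under $\varphi_I$ of a rank-$\deg\varphi_J$ subscheme of $E'$. By Proposition~2.1 and multiplicativity of the reduced (respectively ideal) norm, $\mathrm{H}(IJ)$ has rank $\mathrm{Nr}(IJ)=\mathrm{Nr}(I)\mathrm{Nr}(J)=\deg(\varphi_I)\cdot\deg(\varphi_J)$, so the two subschemes share a common rank.

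Next I would show $IJ\subseteq\mathrm{I}(\ker(\varphi_J\varphi_I))$; by the kernel-ideal correspondence of Proposition~2.1 this gives $\mathrm{H}(IJ)\supseteq\ker(\varphi_J\varphi_I)$, and combined with the rank equality this forces equality. Since $I$ is a kernel ideal with $\mathrm{H}(I)=\ker\varphi_I$, every element of $I$ factors as $\rho\varphi_I$ with $\rho\in\mathrm{Hom}_k(E',E)$, and similarly every element of $J$ factors as $\sigma\varphi_J$ with $\sigma\in\mathrm{Hom}_k(E'',E')$. A typical generator of $IJ$ is therefore the algebra product $(\rho\varphi_I)(\sigma\varphi_J)\in K$. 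Using the commutation relation $\varphi_I\circ i_E(\lambda)=i_{E'}(\lambda)\circ\varphi_I$ from Section~2.2 with $\lambda=\sigma\varphi_J\in\mathrm{End}_k(E')$, one moves $\sigma\varphi_J$ past $\varphi_I$ and rewrites this product as the genuine composition of isogenies $\rho\sigma\varphi_J\varphi_I\in\mathrm{End}_k(E)$, with $\rho\sigma\in\mathrm{Hom}_k(E'',E)$. Since this endomorphism plainly factors through $\varphi_J\varphi_I$, it vanishes on $\ker(\varphi_J\varphi_I)$, so it lies in $\mathrm{I}(\ker(\varphi_J\varphi_I))$, as required.

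The main obstacle is making the rearrangement $(\rho\varphi_I)(\sigma\varphi_J)\mapsto\rho\sigma\varphi_J\varphi_I$ precise, because algebra multiplication in $K$ and geometric composition of isogenies a priori live in different structures; in particular the factor $i_E(\sigma\varphi_J)$ need not be an integral endomorphism of $E$ when $\mathrm{End}_k(E)\ne\mathrm{End}_k(E')$. The fixed identifications $i_E,i_{E'},i_{E''}$ of $K$ with the respective endomorphism algebras, together with the compatibility relation from Section~2.2, are essential to bridge them and to guarantee that the combined expression is nevertheless an integral endomorphism of $E$. Once this rearrangement is in hand, the rank comparison and the kernel-ideal correspondence complete the proof.
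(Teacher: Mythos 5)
The paper itself does not present a proof of this proposition; it simply cites Waterhouse's thesis, so there is nothing in the text to compare against. I can only assess whether your argument stands on its own.

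Your Step~3 --- rewriting $\alpha\gamma=(\rho\varphi_I)(\sigma\varphi_J)$ as $\rho\sigma\varphi_J\varphi_I$ under the fixed identifications $i_E,i_{E'},i_{E''}$, and concluding $IJ\subseteq\mathrm I(\ker(\varphi_J\varphi_I))$ --- is exactly the right idea and is correct. The underlying facts are that $I$ being a kernel ideal gives $I=\mathrm{Hom}_k(E',E)\varphi_I$ (every $\alpha\in I$ vanishes on $\mathrm H(I)=\ker\varphi_I$ and hence factors through $\varphi_I$; conversely $\rho\varphi_I$ kills $\mathrm H(I)$, so lies in $\mathrm I(\mathrm H(I))=I$), and similarly $J=\mathrm{Hom}_k(E'',E')\varphi_J$. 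This is precisely what licenses the commutation you worried about.

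The weak link is Step~2, the rank comparison. Proposition~2.1 only asserts $\mathrm{rank}\,\mathrm H(I)=\mathrm N(I)$ for \emph{invertible} ideals in the imaginary-quadratic (ordinary) case, and the multiplicativity $\mathrm N(IJ)=\mathrm N(I)\mathrm N(J)$ you invoke is also only automatic for invertible ideals (or for reduced norms in the maximal-order/supersingular case). For non-invertible left ideals of an imaginary quadratic order --- exactly the situation where $\mathrm{End}(E)\not\cong\mathrm{End}(E')$, which the paper cares about --- neither step is covered by the cited Proposition, so the rank equality is not established. As it stands, the proof is complete for the supersingular case and the ordinary invertible case, but has a gap otherwise.

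Fortunately the rank detour is unnecessary. Using the factorizations above,
\begin{equation*}
\mathrm H(IJ)=\bigcap_{\alpha\in I,\,\gamma\in J}\ker(\alpha\gamma)=\bigcap_{\rho,\sigma}\ker(\rho\sigma\varphi_J\varphi_I)=(\varphi_J\varphi_I)^{-1}\Bigl(\bigcap_{\rho,\sigma}\ker(\rho\sigma)\Bigr),
\end{equation*}
with $\rho$ ranging over $\mathrm{Hom}_k(E',E)$ and $\sigma$ over $\mathrm{Hom}_k(E'',E')$. Because $I$ is a kernel ideal, $\mathrm H(I)=\varphi_I^{-1}\bigl(\bigcap_\rho\ker\rho\bigr)=\ker\varphi_I$, forcing $\bigcap_\rho\ker\rho=0$ on $E'$; likewise $\bigcap_\sigma\ker\sigma=0$ on $E''$. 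Hence $\bigcap_{\rho,\sigma}\ker(\rho\sigma)=\bigcap_\sigma\sigma^{-1}\bigl(\bigcap_\rho\ker\rho\bigr)=\bigcap_\sigma\ker\sigma=0$, and $\mathrm H(IJ)=\ker(\varphi_J\varphi_I)$ follows directly, with no appeal to norms or invertibility. I would replace your Step~2 with this computation; your Step~3 already contains all the necessary ingredients.
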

 \begin{proof}
   See\cite[Proposition 3.12]{waterhouse1969abe}.
 \end{proof}
 If an isogeny from $E$ has kernel $\rm{H}(\it{J})$ for some ideal $J$ of $\rm{End}_{\it{k}}(\it{E})$ or $\rm{End}(\it{E})$, then we say the isogeny are corresponding to $J$ and $J$ is the kernel ideal of the isogeny. In Waterhouse's thesis, he mentioned that not every finite subgroup of $E$ has the form $\rm{H}(\it{J})$. In fact, we will give out a way to judge whether the isogenies can correspond to ideals in Section 3.

 Let $\beta: E_2\to E_1$ be an isogeny defined over $k$. If deg$(\beta)$ is prime to $p$, then H$(\beta)$=$\rm{ker}(\beta)$ as a subgroup of $E_2(\bar{k})$ which is closed under the action of the Galois group of $\bar{k}$ over $k$. Otherwise, we do some discussion following.

  Given an elliptic curve $E$ defined over $k=\mathbb F_{p^n}$, the Frobenius map $\phi_{p,E}: E\to E^{(p)}$ with $(x,y)\to(x^p,y^p)$ is defined over $k$. And if $E: y^2=x^3+Ax+B$, then $E^{(p)}: y^2=x^3+A^{p}x+B^{p}$. Although \rm{ker}$(\phi_{\it p,E})$=$\{\infty\}$, H$(\phi_{\it p,E})$ is a finite group scheme of rank $p$ and $\rm{Hom(\it E^{(p)},E})\phi_{\it p,E}$ is the kernel ideal of $(\phi_{\it p,E})$. And
  $\phi_{p^i,E}: E\to E^{(p^i)}$ with $(x,y)\to(x^{p^i},y^{p^i})$ is equal to $\phi_{p,E^{(p^{i-1})}}\cdots \phi_{p,E^{(p)}}\phi_{p,E}$ where $i \in \mathbb Z_{>0}$ and $\phi_{p^n,E}$ is the Frobenius endomorphism of $E$ over $k$. If $\beta\in \rm{Hom}(\it{E}_{\rm{2}},\it{E}_{\rm{1}})$ is not separable,  by \cite[Corollarly \uppercase\expandafter{\romannumeral2}.2.12]{silverman2009arithmetic}, $\beta$ can be factored as
\begin{equation*}
E_{\rm 2}\stackrel{\phi_{q'}}\to E_{\rm 2}^{(q')}\stackrel{\lambda}\to  E_{\rm 1},
\end{equation*}
 where $\phi_{q'}$ is the $q'$th-power Frobenius map, $q'$ is the inseparable degree of $\beta$ and the map $\lambda$ is separable.

If $E$ is supersingular, then $\widehat{\phi _{p^n,E}}$ is purely inseparable.  Let $\beta$ be any isogeny from $E$ of degree $p^e$, then $\beta=\lambda \phi_{p^e, E}$, where $\lambda$ is an isomorphism. By Proposition 2.2, $\beta$ corresponds to the following ideal
\begin{equation*}
\rm{Hom}(\it E^{(p)},\it E)\phi_{p,E}\rm{Hom}(\it E^{(p^{\rm2})},\it E^{(p)})\phi_{p,E^{(p)}}\cdots
\rm{Hom}(\it E^{(p^{e})},\it E^{(p^{e-\rm1})})\phi_{p,E^{p^{(e-\rm1)}}} .
\end{equation*}
 Given a maximal order type containing some order $\mathcal O$, whose localization $\mathcal O_p$ has unique maximal two-sided ideal $\mathfrak{P}$. According to Deuring's correspondence between supersingular $j$-invariants and the types of maximal orders in $B_{p,\infty}$\cite[Chapter 10.2]{deuring1941constructing}\cite[Theorem 44]{kohel1996endomorphism}, if $\mathfrak{P}$ is principal, then there is only one $j$-invariant which is defined over $\mathbb F_p$ such that its endomorphism ring corresponds to this type; if $\mathfrak{P}$ is not principal, then there are just two $j$-invariants which are a conjugate pair over $\mathbb F_{p^2}$ such that their endomorphism rings correspond to this type. Hence, if $j$-invariant of $E$ is defined over $\mathbb F_p$, then $\rm{Hom}(\it E^{(p)},\it E)\phi_{p,E}$ is the unique principal prime ideal $\mathfrak{P}$ over $p$; if $j$-invariant of $E$ is not defined over $\mathbb F_p$, then $\rm{Hom}(\it E^{(p)},\it E)\phi_{p,E}$ is the unique two-sided prime ideal $\mathfrak{P}$ over $p$ where $\mathfrak{P}$ is not principal and $\mathfrak{P}^2=(p)$. Thus $\beta$ corresponds to $\mathfrak{P}^e$.

 If $E$ is ordinary, then  $\widehat{\phi _{p^n,E}}$ is separable and $\rm{ker} \widehat{\phi _{\it p^n,E}}=\it E[p^n]$ which is a cyclic group. Since the Kronecker symbol $(\frac{t^2-4q}{p})=1$, then $p$ splits in $\rm{End}(\it{E})$ and there are two prime ideals $\mathfrak P_{\rm 1},\mathfrak P_{\rm 2}$ of norm $p$. Let $\mathfrak P_{\rm 1}$ be the one such that $\mathfrak P_{\rm 1}^n=(\pi_{p^n, E})$ where $q=p^n$, then $\mathfrak P_{\rm 1}=\rm{Hom}(\it{E}^{(p)},E)\phi_{p,E}$ and the other prime ideal $\mathfrak P_{\rm 2}=\rm{Hom}(\it{E},E^{(p)})\widehat \phi_{\it p,E}=\rm{Hom}(\it E^{(\it{p}^{\it n-\rm1})},E)\widehat{\phi}_{\it p,E^{(\it p^{n-\rm 1})}}$. Hence $\mathfrak P_{\rm 1}$ is the kernel ideal corresponding to $\phi_{p,E}$ and $\mathfrak P_{\rm 2}$ is  the kernel ideal corresponding to $\widehat{\phi}_{p,E^{(p^{n-1})}}$.
 Let $\beta$ be an isogeny from $E$ with degree $p^e$ and inseparable degree $p^{e_{\rm 1}}$, then $\beta$ corresponds to $=\mathfrak P_{\rm 1}^{e_{\rm 1}}\mathfrak P_{\rm 2}^{e-e_{\rm 1}}$.

\subsection{$\ell$-isogeny graph}
It is obvious that an ordinary curve and a supersingular curve can never be isogenous. So we can consider the ordinary case and the supersingular case respectively. Given a prime number $\ell$, $\ell\neq p$, we introduce the isogeny graph in the following.

 \begin{itemize}
 \item
Supersingular case:  The $\ell$-isogeny graph $G_{\ell}^{s}$ of supersingular elliptic curves  over $\bar k$ where the vertices are the $\bar k$-isomorphism classes of elliptic curves  and the directed edges correspond to the $\ell$-isogenies, is a $\ell+1$ regular connected Ramanujan graph\cite{charles2009cry,pizer1990ram} . Every vertex has  $\ell+1$ out-degree. Consider the supersingular $\ell$-isogeny graph  $G_{\ell}^{s}(k,t)$ over $k$ with trace $t$, where the vertices are the $k$-isomorphism classes of elliptic curves and the edges correspond to the $l$-isogenies defined over $k$. If all endomorphisms are defined over $k$, we have $G_{\ell}^{s}(\mathbb F_{p^n},\pm2p^\frac{n}{2})\cong G_{\ell}^{s}$\cite{adj2019iso}. Otherwise, the $\ell$-isogeny graphs over $k$ are similar to the ordinary case. In fact, we might as well suppose that $k=\mathbb F_{p^2}$  since every supersingular $j$-invariant is defined over $\mathbb F_{p^2}$.
\item

Ordinary case: Let $G_{\ell}^{o}( k,t)$ be the $\ell$-isogeny graph of ordinary elliptic curves defined over $k$ with trace $t$. The structure of  $G_{\ell}^{o}( k,t)$ was clear by the following three propositions from Kohel's  thesis\cite{kohel1996endomorphism} and then Sutherland put forward the definition of $\ell$-volcano\cite{sutherland2013isogeny}. Suppose $\mathbb Z[\pi_{q,E}]$ has conductor $\mathit f_0$, then $G_{\ell}^{o}(k,t)$ consists of finite such $\ell$-volcanoes with $d=v_{\ell}(\mathit f_0)$ where $v_{\ell}$ is the $\ell$-adic valuation. More precisely, the number of all vertices with endomorphism ring $\mathcal O$ is the ring class number $h(\mathcal O)$\cite{schoff1987nonsingular,sutherland2013isogeny,waterhouse1969abe}.
Any two volcanoes can't be connected, otherwise they are the same one. After enumerating the number of vertices in $G_{\ell}^{o}(k,t)$, $G_{\ell}^{o}( k,t)$  have
   $\sum\limits_{\substack{\mathbb{Z}[\pi]\subseteq \mathcal O\subseteq \mathcal O_K,\\ \mathcal O\ \rm {is\ maximal\ at}\ \ell }}\frac{h(\mathcal O)}{ord_{cl(\mathcal O)}(I)}$ connected components where $cl(\mathcal O)$ is the ring class group and $I$ is a prime ideal of $\mathcal O$ over $\ell$.

\end{itemize}

\begin{Definition}
 An $\ell$-volcano $V$ of level $d$ is a connected undirected graph whose vertices are partitioned into $V_0,\dots ,V_d$ such that the followings hold:
 \begin{enumerate}[1)]
   \item The subgraph on $V_0$ (the surface) is a regular graph of degree at most 2.
   \item For $i > 0$, each vertex in $V_i$ has exactly one neighbor in level $V_{i-1}$, and this accounts for every edge not on the surface.
   \item For $i < d$, each vertex in $V_i$ has degree $\ell+1$ and each vertex in $V_d$(the floor) has degree $1$.
 \end{enumerate}
\end{Definition}

\begin{pro}
Let $E_{\rm 2}/k$ be ordinary, $\beta: E_{\rm 2}\to E_{\rm 1}$ is an isogeny of prime degree $\ell$ prime to $p$, then the following three conditions will happen:
\begin{enumerate}[1)]
  \item $\rm{End}(\it{E}_{\rm{1}})\cong \rm{End}(\it{E}_{\rm{2}})$, in which case we say that $\beta$ is horizontal.
  \item $[\rm{End}(\it{E}_{\rm{2}}):\rm{End}(\it{E}_{\rm{1}})]=$ $\ell$, in which case we say that $\beta$ is descending.
  \item $[\rm{End}(\it{E}_{\rm{1}}):\rm{End}(\it{E}_{\rm{2}})]=$ $\ell$, in which case we say that $\beta$ is ascending.
\end{enumerate}

\end{pro}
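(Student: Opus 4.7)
The plan is to embed both endomorphism rings as orders in the common imaginary quadratic field $K$ via the isomorphisms $i_{E_1}, i_{E_2}$ set up in Section~2.2, and then bound how far apart these two orders can sit. Since $E_1$ and $E_2$ are $k$-isogenous and ordinary, their endomorphism algebras coincide with the same $K$; write $\mathcal{O}_1 := i_{E_1}^{-1}(\mathrm{End}(E_1))$ and $\mathcal{O}_2 := i_{E_2}^{-1}(\mathrm{End}(E_2))$ for the resulting orders of $K$. The statement then becomes: either $\mathcal{O}_1 = \mathcal{O}_2$, or one contains the other with index exactly $\ell$.

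The key step is to establish the two containments $\ell\,\mathcal{O}_2 \subseteq \mathcal{O}_1$ and $\ell\,\mathcal{O}_1 \subseteq \mathcal{O}_2$. For the first, take $\zeta \in \mathcal{O}_2$, so that $i_{E_2}(\zeta) \in \mathrm{End}(E_2)$; then $\beta\,i_{E_2}(\zeta)\,\widehat\beta$ is an honest endomorphism of $E_1$, and by the defining formula $i_{E_1}(\zeta) = \frac{1}{\deg\beta}\,\beta\,i_{E_2}(\zeta)\,\widehat\beta$ (with $\deg\beta=\ell$) this composite equals $\ell\,i_{E_1}(\zeta)$, so $\ell\zeta \in \mathcal{O}_1$. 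For the reverse, given $\zeta \in \mathcal{O}_1$, the composite $\widehat\beta\,i_{E_1}(\zeta)\,\beta$ is an endomorphism of $E_2$; substituting the definition of $i_{E_1}(\zeta)$ and using $\widehat\beta\beta = [\ell]_{E_2}$ together with $\beta\widehat\beta = [\ell]_{E_1}$ shows that this composite equals $\ell\,i_{E_2}(\zeta)$, hence $\ell\zeta \in \mathcal{O}_2$.

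Once both containments are in hand I pass to localisations. At any rational prime $q \neq \ell$, multiplication by $\ell$ is a unit in $\mathbb{Z}_q$, so the two containments force $\mathcal{O}_{1,q} = \mathcal{O}_{2,q}$. At $q = \ell$, each localisation has the standard form $\mathcal{O}_{i,\ell} = \mathbb{Z}_\ell + \ell^{a_i}\,\mathcal{O}_{K,\ell}$, where $\mathcal{O}_K$ is the maximal order of $K$ and $a_i\ge 0$; the containments translate into $a_1 \leq a_2 + 1$ and $a_2 \leq a_1 + 1$, so $|a_1 - a_2| \leq 1$. The three cases $a_1 = a_2$, $a_1 = a_2 + 1$, $a_1 = a_2 - 1$ yield respectively $\mathcal{O}_1 = \mathcal{O}_2$ (horizontal), $[\mathcal{O}_2 : \mathcal{O}_1] = \ell$ (descending), and $[\mathcal{O}_1 : \mathcal{O}_2] = \ell$ (ascending), which is exactly the trichotomy asserted. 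No serious obstacle is anticipated: the ordinary hypothesis makes $K$ commutative so that each order is determined by a single conductor, reducing the whole comparison to a one-dimensional valuation argument at $\ell$. The only mildly delicate bookkeeping is ensuring the two embeddings $i_{E_1}, i_{E_2}$ are used consistently throughout the sandwich computation above.
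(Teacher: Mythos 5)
Your proof is correct and reproduces, essentially verbatim, the standard argument that the paper itself defers to by citing Kohel's thesis (Proposition~21) and Sutherland's isogeny-volcano exposition (Section~2.7): the sandwich $\ell\,\mathcal{O}_1\subseteq\mathcal{O}_2$ and $\ell\,\mathcal{O}_2\subseteq\mathcal{O}_1$ obtained by conjugating endomorphisms by $\beta$ and $\widehat\beta$, followed by comparison of conductors after localizing at $\ell$. The bookkeeping is sound — $\widehat\beta\beta=[\ell]_{E_2}$ and $\beta\widehat\beta=[\ell]_{E_1}$ are used consistently with the paper's conventions from Section~2.2, and the local form $\mathcal{O}_{i,\ell}=\mathbb{Z}_\ell+\ell^{a_i}\mathcal{O}_{K,\ell}$ together with the two containments yields $|a_1-a_2|\leqslant 1$ exactly as claimed — so no gap remains.
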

\begin{proof}
 See \cite[Proposition 21]{kohel1996endomorphism} or \cite[Section 2.7]{sutherland2013isogeny}.
\end{proof}

Note that for $[\rm{End}(\it{E}_{\rm{1}}):\rm{End}(\it{E}_{\rm{2}})]=$ $\ell$, we may write $[\rm{End}(\it{E}_{\rm{2}}):\rm{End}(\it{E}_{\rm{1}})]$$ =\frac{1}{\ell}$ for the same meaning in the following.

\begin{pro}
 Let $\beta: E_{\rm 2}\to E_{\rm 1}$ be an isogeny of ordinary elliptic curves over $k$, then the following conditions are equivalent:
 \begin{enumerate}[1)]
   \item $\rm{End}(\it{E}_{\rm{1}})$ and $\rm{End}(\it{E}_{\rm{2}})$ are isomorphic.
   \item The left ideal $\rm{Hom}(\it{E}_{\rm{1}},\it{E}_{\rm{2}})\beta$ is an invertible ideal of $\rm{End}(\it{E}_{\rm{2}})$ with norm equal to deg$(\beta)$.
   \item There exists an isogeny $\psi: E_{\rm 2}\to E_{\rm 1}$ of degree relatively prime to deg$(\beta)$.
 \end{enumerate}

\end{pro}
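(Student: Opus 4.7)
The plan is to prove the equivalences via the cycle $(1)\Rightarrow(2)\Rightarrow(3)\Rightarrow(1)$. Throughout I identify $\mathrm{End}(E_1)$ and $\mathrm{End}(E_2)$ as orders $\mathcal{O}_1,\mathcal{O}_2$ in the imaginary quadratic field $K$ using the isomorphisms of Section~2.2, so that $I:=\mathrm{Hom}(E_1,E_2)\beta\subseteq\mathcal{O}_2$ sits as a left ideal.

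For $(1)\Rightarrow(2)$, I assume $\mathcal{O}_1=\mathcal{O}_2=\mathcal{O}$. The CM correspondence between ordinary elliptic curves with endomorphism ring $\mathcal{O}$ and the Picard group $\mathrm{Pic}(\mathcal{O})$ shows that $\mathrm{Hom}(E_1,E_2)$ is a proper (invertible) fractional $\mathcal{O}$-ideal. Multiplication by $\beta\in K^{*}$ preserves invertibility, so $I$ is an invertible $\mathcal{O}_2$-ideal. Since $I=I(\ker\beta)$ is a kernel ideal by Waterhouse's theorem, $H(I)=\ker\beta$ has rank $\deg\beta$, and Proposition~2.1(2) gives $N(I)=\mathrm{rank}\,H(I)=\deg\beta$.

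For $(2)\Rightarrow(3)$, since $I$ is invertible in $\mathcal{O}_2$, the action of $\mathrm{Pic}(\mathcal{O}_2)$ on the set of ordinary elliptic curves with endomorphism ring $\mathcal{O}_2$ sends $E_2$ to $E_2/H(I)=E_1$; this action preserves endomorphism rings, incidentally forcing $\mathcal{O}_1=\mathcal{O}_2$. I then choose an invertible integral ideal $J\subseteq\mathcal{O}_2$ in the class $[I]\in\mathrm{Pic}(\mathcal{O}_2)$ with $\gcd(N(J),\deg\beta)=1$; such $J$ exists by the standard approximation argument in the finite class group of an order. By Proposition~2.2, $J$ induces an isogeny $\varphi_J:E_2\to E_2/H(J)$ of degree $N(J)$, and $[J]=[I]$ gives $E_2/H(J)\cong E_2/H(I)=E_1$. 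Composing $\varphi_J$ with this isomorphism produces $\psi:E_2\to E_1$ with $\deg\psi=N(J)$ coprime to $\deg\beta$.

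For $(3)\Rightarrow(1)$, fix any rational prime $\ell$. By coprimality at least one of $\deg\beta,\deg\psi$ is coprime to $\ell$; by symmetry assume $\ell\nmid\deg\beta$. Factor $\beta=\beta_s\circ\cdots\circ\beta_1$ into prime-degree isogenies, each of degree some prime $\ell_i\neq\ell$. Proposition~2.3 says each $\beta_i$ alters the endomorphism ring only at the prime $\ell_i$, so the $\ell$-part of $\mathrm{End}$ is preserved across the entire chain. Hence $\mathrm{End}(E_1)_\ell=\mathrm{End}(E_2)_\ell$ for every $\ell$, and since orders in $K$ are determined by their localizations we conclude $\mathrm{End}(E_1)=\mathrm{End}(E_2)$.

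The main obstacle is step $(2)\Rightarrow(3)$: upgrading the abstract class equality $[J]=[I]$ to an honest isomorphism $E_2/H(J)\cong E_1$ that converts $\varphi_J$ into a bona fide isogeny $\psi:E_2\to E_1$ of the stated degree requires the full $\mathrm{Pic}(\mathcal{O})$-torsor structure on ordinary elliptic curves with a fixed, possibly non-maximal, endomorphism ring. The CM-theoretic input in the non-maximal case is delicate, while Propositions~2.1 and 2.2 serve as the algebraic bridge that translates ideal norms into isogeny degrees and ideal products into isogeny compositions.
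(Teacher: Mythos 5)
The paper does not give its own proof of this proposition; it simply cites Kohel's thesis \cite[Proposition~22]{kohel1996endomorphism} and Sutherland \cite[Section~2.9]{sutherland2013isogeny}. Your cyclic scheme $(1)\Rightarrow(2)\Rightarrow(3)\Rightarrow(1)$ is a sensible route and is close in spirit to those sources, with the volcano argument for $(3)\Rightarrow(1)$ and the $\mathrm{Pic}(\mathcal{O})$-torsor for $(2)\Rightarrow(3)$ both standing on solid ground (granted the CM-theoretic facts for non-maximal orders, which you rightly flag as the delicate input).

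There is, however, a real gap in $(1)\Rightarrow(2)$. You write that since $I=\mathrm{I}(\ker\beta)$ is a kernel ideal, ``$H(I)=\ker\beta$''; this does not follow. The Galois connection between group schemes and ideals only gives $\ker\beta\subseteq H(I)$, and $I$ being a kernel ideal means $I=\mathrm{I}(H(I))$, which together with $I=\mathrm{I}(\ker\beta)$ still does not force $H(I)=\ker\beta$ (a priori $H(I)$ could be strictly larger with the same annihilator ideal). The conclusion $N(I)=\deg\beta$ is correct, but you should derive it differently. The cleanest repair uses the fractional-ideal picture you already set up: when $\mathcal{O}_1=\mathcal{O}_2=\mathcal{O}$, the identification of $\mathrm{Hom}(E_1,E_2)$ and $\mathrm{Hom}(E_2,E_1)$ with invertible colon ideals gives $\mathrm{Hom}(E_1,E_2)\cdot\mathrm{Hom}(E_2,E_1)=\mathcal{O}$, so
\begin{equation*}
I\bar{I}=\mathrm{Hom}(E_1,E_2)\,\beta\,\widehat{\beta}\,\mathrm{Hom}(E_2,E_1)=(\deg\beta)\,\mathcal{O},
\end{equation*}
and for an invertible integral ideal of an imaginary quadratic order one has $I\bar{I}=N(I)\mathcal{O}$, whence $N(I)=\deg\beta$ directly, with no appeal to $H(I)$. (The equality $H(I)=H(\beta)$ is then a \emph{consequence} of this: both have rank $\deg\beta$ and one contains the other.) A secondary imprecision is that $I=\mathrm{I}(\ker\beta)$ via the paper's Lemma~3.2 only covers separable $\beta$; for inseparable $\beta$ you must work with the finite group scheme $H(\beta)$ throughout, which is why the paper is careful about group-scheme kernels in Section~2.2.
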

\begin{proof}
  See \cite[Proposition 22]{kohel1996endomorphism} or \cite[Section 2.9]{sutherland2013isogeny}.
\end{proof}

\begin{pro}
Let E/k be an ordinary elliptic curve with endomorphism ring $\mathcal O$ of discriminant $D$, let $\ell$ be a prime different from p, and let $\left(\frac{\ast}{\ast}\right)$ be the Kronecker symbol. The following isogenies are defined over $k$.
\begin{enumerate}[1)]
  \item If $\mathcal O$ is maximal at $\ell$, then there are $1+\left(\frac{D}{\ell}\right)$ isogenies of degree $\ell$ from $E$ to curves with endomorphism ring  $\mathcal O$.
  \item If $\mathcal O$ is not maximal at $\ell$, then there are no isogenies of degree $\ell$ from $E$ to curves with endomorphism ring $\mathcal O$.
  \item If there exist more than $1+\left(\frac{D}{\ell}\right)$ distinct isogenies from $E$ of degree $\ell$ then
all isogenies from $E$ of degree $\ell$ are defined over k, and up to $k$-isomorphism of $E'$, there are exactly
 \begin{equation*}
\left(l-\left(\frac{D}{\ell}\right)\right)[\mathcal O^{\ast}:\mathcal O'^{\ast}]^{-1}
\end{equation*}
elliptic curves $E'$ and $[\mathcal O^{\ast}:\mathcal O'^{\ast}]$ distinct isogenies $E \to E'$ of degree $\ell$ such that $[\rm{End}(\it{E}):\rm{End}(\it{E}')]=$ $\ell$ where $\rm{End}(\it{E}')=\mathcal O'$.
\end{enumerate}
\end{pro}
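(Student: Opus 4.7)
The plan is to combine the horizontal/ascending/descending trichotomy of Proposition 2.3 with the invertible-ideal correspondence of Proposition 2.4, and then extract the counts via the action of Frobenius and of $\mathrm{Aut}(E)$ on the $\ell+1$ lines of $E[\ell]\simeq(\mathbb{Z}/\ell\mathbb{Z})^{2}$.

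For parts (1) and (2), I would observe that by Proposition 2.4, horizontal degree-$\ell$ isogenies from $E$ correspond bijectively to invertible left ideals of $\mathcal{O}$ of norm $\ell$, equivalently to invertible prime ideals of $\mathcal{O}$ above $\ell$. Standard theory of orders in imaginary quadratic fields says such primes exist if and only if $\mathcal{O}$ is $\ell$-maximal, in which case their count is $1+\left(\frac{D}{\ell}\right)$ (two if $\ell$ splits, one if it ramifies, zero if it is inert). If $\mathcal{O}$ is not $\ell$-maximal, then $\ell^{2}\mid D$, forcing $\left(\frac{D}{\ell}\right)=0$; this will be used below.

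For (3), since $E[\ell]$ has exactly $\ell+1$ subgroups of order $\ell$, there are $\ell+1$ degree-$\ell$ isogenies from $E$ up to post-composition by $\bar{k}$-isomorphism. Such an isogeny is $k$-rational iff its kernel is stable under the Frobenius $\pi_{q,E}$, acting on $E[\ell]$ as a $2\times 2$ matrix over $\mathbb{F}_{\ell}$; hence the number of $k$-rational degree-$\ell$ isogenies is $0$, $1$, $2$, or $\ell+1$. Since $1+\left(\frac{D}{\ell}\right)\in\{0,1,2\}$, the hypothesis forces all $\ell+1$ to be $k$-rational. The $1+\left(\frac{D}{\ell}\right)$ horizontal ones from (1), together with the unique ascending isogeny that exists exactly when $\mathcal{O}$ is not $\ell$-maximal (in which case $\left(\frac{D}{\ell}\right)=0$), account for $1+\left(\frac{D}{\ell}\right)$ non-descending isogenies; so the descending ones number $\ell-\left(\frac{D}{\ell}\right)$ and all land in curves with endomorphism ring $\mathcal{O}'$, the unique suborder of $\mathcal{O}$ of index $\ell$.

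Finally, $\mathrm{Aut}(E)=\mathcal{O}^{\ast}$ acts by pullback on the set of descending kernels in $E[\ell]$; two kernels give $k$-isomorphic quotient curves precisely when they lie in the same orbit, and the stabilizer of each is the subgroup of $\mathcal{O}^{\ast}$ whose elements descend to automorphisms of the quotient, namely $\mathcal{O}'^{\ast}$. Each orbit therefore has size $[\mathcal{O}^{\ast}:\mathcal{O}'^{\ast}]$, yielding $\left(\ell-\left(\frac{D}{\ell}\right)\right)[\mathcal{O}^{\ast}:\mathcal{O}'^{\ast}]^{-1}$ distinct target curves up to $k$-isomorphism with $[\mathcal{O}^{\ast}:\mathcal{O}'^{\ast}]$ isogenies to each. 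I expect the main obstacle to be this last orbit count: only for the exceptional discriminants $D=-3,-4$ does $[\mathcal{O}^{\ast}:\mathcal{O}'^{\ast}]$ exceed $1$, and there one must verify carefully that the stabilizer of a descending kernel in $\mathcal{O}^{\ast}$ really is $\mathcal{O}'^{\ast}$.
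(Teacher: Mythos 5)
The paper itself gives no proof of this proposition---it simply cites Kohel's thesis (Prop.~23) and Sutherland's survey (\S 2.10)---so your write-up is necessarily the more detailed of the two, and your overall strategy (sort degree-$\ell$ kernels into horizontal / ascending / descending via Propositions~2.3--2.4 and count Frobenius- and $\mathcal{O}^\ast$-orbits among the $\ell+1$ lines of $E[\ell]$) is exactly the standard one; parts (1) and (2) are handled correctly. In part (3), however, there are two genuine gaps. First, ``the number of Frobenius-stable lines lies in $\{0,1,2,\ell+1\}$'' together with ``$1+\left(\frac{D}{\ell}\right)\in\{0,1,2\}$'' does \emph{not} imply that exceeding $1+\left(\frac{D}{\ell}\right)$ forces all $\ell+1$: if $1+\left(\frac{D}{\ell}\right)$ equals $0$ or $1$, the count could a priori still be $1$ or $2$. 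What is actually needed is the sharper dichotomy that the count is always exactly $1+\left(\frac{D}{\ell}\right)$ or exactly $\ell+1$, and the missing step is: a single rational descending kernel $C$ forces $\mathbb{Z}[\pi_{q,E}]\subseteq\mathrm{End}(E/C)\subsetneq\mathcal{O}$ with index $\ell$, so $v_\ell(\mathrm{cond}\,\mathcal{O})<v_\ell(\mathrm{cond}\,\mathbb{Z}[\pi_{q,E}])$, whence $(\pi_{q,E}-a)/\ell\in\mathcal{O}$ for a suitable integer $a$; thus $\pi_{q,E}$ acts as a scalar on $E[\ell]$ and \emph{all} $\ell+1$ lines are stable.

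Second, the orbit bookkeeping is asserted rather than proved. You flag the stabilizer identification $\mathrm{Stab}_{\mathcal{O}^\ast}(C)=\mathcal{O}'^\ast$ as the hard part, but the ``only if'' direction of ``$E/C_1\cong_k E/C_2$ iff $C_2=\sigma(C_1)$ for some $\sigma\in\mathcal{O}^\ast$'' is equally in need of an argument, and it is the crux of the count $\left(\ell-\left(\frac{D}{\ell}\right)\right)\big/[\mathcal{O}^\ast:\mathcal{O}'^\ast]$. Both follow from one device you have not made explicit: given $\iota\colon E/C_1\to E/C_2$ (an isomorphism, or an automorphism when $C_1=C_2$), $\iota$ must carry the unique ascending kernel $\phi_1(E[\ell])$ of $E/C_1$ to that of $E/C_2$, because $\mathrm{End}(E/C_i)=\mathcal{O}'$ is not $\ell$-maximal and hence admits exactly one non-descending $\ell$-kernel; therefore $\widehat{\phi_2}\,\iota\,\phi_1$ annihilates $E[\ell]$, factors as $\ell\sigma$ with $\sigma\in\mathcal{O}$ of degree $1$, and $\sigma\in\mathcal{O}^\ast$ satisfies $\sigma(C_1)=C_2$. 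Without this lemma the final formula is unsubstantiated, precisely in the exceptional cases $D\in\{-3,-4\}$ where the formula has content.
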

\begin{proof}
 See \cite[Proposition 23]{kohel1996endomorphism} or \cite[Section 2.10]{sutherland2013isogeny}.
\end{proof}

Note that $\vert \mathcal O^{\ast} \vert=2$ if $j_E\ne 0, 1728$. If $ j_E=1728$, then   $\rm{End}(\it{E})=\mathbb Z[i]$ and if $j_E=0$, then $\rm{End}(\it{E})=\mathbb Z[\frac{\rm 1+\sqrt{-3}}{2}]$ .

 \subsection{Ideals of imaginary quadratic orders}

This subsection can be found in \cite{buchmann2007binary,cox2011primes}. Suppose now $K$ is an imaginary quadratic field,  $\mathcal O$ is an order of it. An $\mathcal O$-ideal $I$ is an additive subgroup of $\mathcal O$ which is an $\mathcal O$-module with respect to multiplication. $I$ is called primitive if  it cannot be written as $tJ$ for some  $t\in \mathbb Z$ and $J$ is an $\mathcal O$-ideal. $I$ is called proper if $\{ \alpha \in K \vert \alpha I\subseteq I\}=\mathcal O$. $I$ is invertible if and only if it is proper. Since $\rm{rank}_\mathbb Z(\mathcal O)=2$, its ideals also have a basis consisting of two elements. Suppose $K$ has discriminant $D_0$, and let $\gamma$ be $\pm \frac {\sqrt{D_0}}{2} $ if $D_0\equiv0\ (\rm{mod}\ 4)$ or $\frac {1\pm \sqrt{D_0}}{2}$ if $D_0\equiv1\ (\rm{mod}\ 4)$. Suppose $\mathcal O$ has conductor $\mathit f$, then the discriminant of $\mathcal O$ is $\mathit f^2D_0$. Any primitive $\mathcal O$-ideal $I$ can be written as $\mathbb Za+\mathbb Z(b+\mathit f \gamma)$\cite[Proposition 8.4.5]{buchmann2007binary}. For the general integral ideals, they have the form $\mathbb Zat+\mathbb Zt(b+\mathit f \gamma)$ where $t,a \in \mathbb Z$. The norm of $I$ in  $\mathcal O$ is $[\mathcal O:I]=t^{\rm 2}a$.

\section{The index of $\rm{Hom}_{\it{k}}(\it{E}_{\rm{1}},\it{E}_{\rm{2}})\beta$ in $\rm{End}_{\it{k}}(\it{E}_{\rm{2}})$}

\begin{pro}
  Given two elliptic curves $E_{\rm 1}$, $E_{\rm 2}$ defined over a finite field $k$ with the same trace, if $\rm{End}_{\it{k}}(\it{E}_{\rm{2}})=\rm{End}(\it{E}_{\rm{2}})$, then $\rm{Hom}_{\it{k}}(\it{E}_{\rm{1}},\it{E}_{\rm{2}})=\rm{Hom}(\it{E}_{\rm{1}},\it{E}_{\rm{2}})$.
\end{pro}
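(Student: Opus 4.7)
The plan is to leverage the hypothesis $\operatorname{End}_k(E_2)=\operatorname{End}(E_2)$ to trade an arbitrary $\bar k$-isogeny $\alpha\colon E_1\to E_2$ for a $k$-rational endomorphism of $E_2$, and then cancel off the scalar that appears.

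First, since $E_1$ and $E_2$ have the same trace, they are $k$-isogenous, so I may fix a nonconstant isogeny $\beta\colon E_2\to E_1$ defined over $k$, and its dual $\widehat\beta\colon E_1\to E_2$ is automatically defined over $k$ as well. The containment $\operatorname{Hom}_k(E_1,E_2)\subseteq \operatorname{Hom}(E_1,E_2)$ is immediate, so I only need the reverse. Take any $\alpha\in\operatorname{Hom}(E_1,E_2)$. Then $\alpha\beta$ is a member of $\operatorname{End}(E_2)$, and by hypothesis $\operatorname{End}(E_2)=\operatorname{End}_k(E_2)$, so $\alpha\beta$ is defined over $k$.

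Next, I compose on the right with $\widehat\beta$: both $\alpha\beta$ and $\widehat\beta$ are defined over $k$, hence so is $(\alpha\beta)\widehat\beta = \alpha\,[\deg\beta]_{E_1} = [\deg\beta]_{E_2}\,\alpha$. Thus the isogeny $[n]\alpha$, with $n=\deg\beta$, is $k$-rational. To finish I pass to Galois: for any $\sigma\in\operatorname{Gal}(\bar k/k)$, applying $\sigma$ to the equation $[n]\alpha=[n]\alpha$ gives $[n](\sigma(\alpha)-\alpha)=0$ in $\operatorname{Hom}(E_1,E_2)$. Because $\operatorname{Hom}(E_1,E_2)$ is a free $\mathbb Z$-module (of rank at most $4$), it is torsion-free, so $\sigma(\alpha)=\alpha$; this holds for every $\sigma$, so $\alpha$ is defined over $k$.

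The only subtle point is the last cancellation step: multiplication by an integer is faithful on $\operatorname{Hom}(E_1,E_2)$ precisely because of the $\mathbb Z$-freeness cited in Section~1, which is what lets us upgrade ``$n\alpha$ is $k$-rational'' to ``$\alpha$ is $k$-rational.'' Everything else is formal manipulation with $\beta\widehat\beta=[\deg\beta]$ and the fact that the dual of a $k$-rational isogeny is $k$-rational.
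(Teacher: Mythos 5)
Your proof is correct, and it takes a genuinely different route from the paper's. The paper first observes that $\mathrm{Hom}_k(E_1,E_2)$ and $\mathrm{Hom}(E_1,E_2)$ have the same $\mathbb Z$-rank (because $\mathrm{Hom}_k(E_1,E_2)\beta$ is a nonzero ideal of $\mathrm{End}_k(E_2)$), which yields an integer $m$ with $m\alpha\in\mathrm{Hom}_k(E_1,E_2)$; it then splits into cases, handling separable $\alpha$ by factoring $m\alpha$ through $[m]$ via \cite[Corollary III.4.11]{silverman2009arithmetic}, and inseparable $\alpha$ by first peeling off a Frobenius $\phi_{q',E_1}$, which is automatically $k$-rational. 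Your argument instead produces the multiple explicitly — $[\deg\beta]\alpha=(\alpha\beta)\widehat\beta$, with $\alpha\beta\in\mathrm{End}(E_2)=\mathrm{End}_k(E_2)$ by hypothesis — and then passes from ``$[n]\alpha$ is $k$-rational'' to ``$\alpha$ is $k$-rational'' by Galois descent and the torsion-freeness of $\mathrm{Hom}(E_1,E_2)$: since $[n](\sigma(\alpha)-\alpha)=0$ and the module is $\mathbb Z$-free, $\sigma(\alpha)=\alpha$ for every $\sigma\in\mathrm{Gal}(\bar k/k)$. This avoids the separable/inseparable case split entirely and makes the use of the hypothesis $\mathrm{End}_k(E_2)=\mathrm{End}(E_2)$ more transparent; the paper's route, by contrast, leans on the rank comparison and the kernel-factoring lemma, which is the technology it reuses elsewhere. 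Both are valid; yours is arguably the cleaner self-contained argument, while the paper's fits its later toolkit.
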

\begin{proof}
  If $\rm{Hom}_{\it{k}}(\it{E}_{\rm{1}},\it{E}_{\rm{2}})\ne \rm 0$, then rank$_{\mathbb Z} \rm{Hom}_{\it{k}}(\it{E}_{\rm{1}},\it{E}_{\rm{2}})=\rm{rank}_{\mathbb Z} \rm{End}_{\it{k}}(\it{E}_{\rm{2}})$, since $\rm{Hom}_{\it{k}}(\it{E}_{\rm{1}},\it{E}_{\rm{2}})\beta$ is a non-zero ideal of $\rm{End}_{\it{k}}(\it{E}_{\rm{2}})$ for any non-zero $\beta \in \rm{Hom}_{\it{k}}(\it{E}_{\rm{2}},\it{E}_{\rm{1}})$.
  Since $\rm{Hom}_{\it{k}}(\it{E}_{\rm{1}},\it{E}_{\rm{2}})\subseteq \rm{Hom}(\it{E}_{\rm{1}},\it{E}_{\rm{2}})$ with the same rank as $\mathbb Z$-module, for any $\alpha \in \rm{Hom}(\it{E}_{\rm{1}},\it{E}_{\rm{2}})$, there exists $m\in \mathbb Z$ such that $m\alpha \in \rm{Hom}_{\it{k}}(\it{E}_{\rm{1}},\it{E}_{\rm{2}})$. If $\alpha$ is separable, since $\rm{ker}(\it{m})\subseteq \rm{ker} (\it{m}\alpha)$, there exists $\alpha' \in \rm{Hom}_{\it{k}}(\it{E}_{\rm{1}},\it{E}_{\rm{2}})$ such that $m\alpha=\alpha' m$  by \cite[Corollarly \uppercase\expandafter{\romannumeral3} 4.11]{silverman2009arithmetic}, hence $\alpha=\alpha'$. If $\alpha$ is not separable with inseparable degree $q'$, then $\alpha$ can be factored as $\alpha=\lambda \phi_{q',E_{\rm 1}}$ where $\lambda$ is separable from $E_{\rm 1}^{(q')}$ to $E_{\rm 2}$.
$\lambda$ can be proved to be defined over $k$ since $E_{\rm 1}^{(q')}$ is also defined over $k$, hence $\alpha$ is defined over $k$.
\end{proof}

Hence for the elliptic curves over $k$ whose endomorphisms are all defined over $k$, it is enough to consider $\rm{Hom}(\it{E}_{\rm{1}},\it{E}_{\rm{2}})$. In fact, with the help of the study of Waterhouse and Kohel, it is relatively simple to get the index of  $\rm{Hom}(\it{E}_{\rm{1}},\it{E}_{\rm{2}})\beta$ in $\rm{End}(\it E_{\rm2})$ when $E_2$ is supersingular or $E_2$ is ordinary and $\rm{Hom}(\it{E}_{\rm{1}},\it{E}_{\rm{2}})\beta$ is an invertible ideal of  $\rm{End}(\it E_{\rm 2})$. But it is not trivial to get the answer for any $\beta$ when $E_2$ is ordinary and $\rm{Hom}(\it{E}_{\rm{1}},\it{E}_{\rm{2}})\beta$ is not invertible.

\subsection{Supersingular case}

\begin{thm}

Given two supersingular elliptic curves $E_{\rm 1}$, $E_{\rm 2}$ defined over $k$, and an isogeny $\beta: E_2 \to E_1$, then
 \begin{equation*}
   [\rm{End}(\it{E}_{\rm{2}}): \rm{Hom}(\it{E}_{\rm1}, E_{\rm2})\beta]=[\rm{End}(\it{E}_{\rm{1}}): \beta\rm{Hom}(\it{E}_{\rm1}, E_{\rm2})]=(\rm{deg}\beta)^2
 \end{equation*} and $\rm{Hom}(\it{E}_{\rm1}, E_{\rm2})\beta$ is the kernel ideal corresponding to $\beta$.
\end{thm}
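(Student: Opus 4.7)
The plan is to establish the scheme-theoretic identity $H(I)=H(\beta)$ for $I:=\mathrm{Hom}(E_1,E_2)\beta$, and then read off both indices and the kernel-ideal assertion from Proposition~2.1(1) together with the standard index formula for left ideals in a maximal order of $B_{p,\infty}$.

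First I would verify $H(I)=H(\beta)$ as finite group schemes over $k$. Since $\beta\in I$, the scheme-theoretic intersection $H(I)$ is contained in $\ker\beta=H(\beta)$. Conversely, each element of $I$ has the form $\alpha\beta$ with $\alpha\in\mathrm{Hom}(E_1,E_2)$, and the factorization $\alpha\beta=\alpha\circ\beta$ gives $\ker\beta\subseteq\ker(\alpha\beta)$ as group schemes; intersecting over $\alpha$ yields $H(\beta)\subseteq H(I)$. Because $\mathrm{End}(E_2)$ is a maximal order in $B_{p,\infty}$, Proposition~2.1(1) says every ideal is a kernel ideal, so $I=\mathrm{I}(H(I))=\mathrm{I}(H(\beta))$, which by the definition in Section~2.2 is exactly the kernel ideal corresponding to $\beta$.

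A second application of Proposition~2.1(1) gives $\mathrm{Nr}(I)=\mathrm{rank}\,H(I)=\mathrm{rank}\,H(\beta)=\deg\beta$. The standard formula $[\mathcal{O}:J]=\mathrm{Nr}(J)^2$ for a left ideal $J$ of a maximal order $\mathcal{O}$ in a definite quaternion algebra over $\mathbb{Q}$ (see \cite{voight2019quaternion}) then yields $[\mathrm{End}(E_2):I]=(\deg\beta)^2$. For the companion index, recall from Section~2.2 that under the identifications induced by $i_{E_1}$ and $i_{E_2}$ the subsets $\mathrm{Hom}(E_1,E_2)\beta$ and $\beta\,\mathrm{Hom}(E_1,E_2)$ coincide inside $B_{p,\infty}$; this common subset is simultaneously a left $\mathrm{End}(E_2)$-ideal and a right $\mathrm{End}(E_1)$-ideal, and since both orders are maximal the same formula on either side gives $[\mathrm{End}(E_1):\beta\,\mathrm{Hom}(E_1,E_2)]=\mathrm{Nr}(I)^2=(\deg\beta)^2$. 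Alternatively, one may run the argument for the dual isogeny $\widehat\beta:E_1\to E_2$, whose associated left ideal $\mathrm{Hom}(E_2,E_1)\widehat\beta$ of $\mathrm{End}(E_1)$ is the $B_{p,\infty}$-conjugate of $I$, so both the reduced norm and the $\mathbb{Z}$-index are preserved.

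The main obstacle I anticipate is the scheme-theoretic equality $H(I)=H(\beta)$: when $\beta$ has an inseparable part, checking kernels only on $\bar k$-points loses the rank information, so the factorization $\alpha\beta=\alpha\circ\beta$ must be exploited at the level of finite group schemes rather than just point sets. Once this is in place, the kernel-ideal claim and both index computations are immediate consequences of Proposition~2.1(1) and the known structure of connecting ideals between maximal orders in $B_{p,\infty}$.
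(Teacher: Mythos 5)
Your outline has the right shape, but the pivotal step is wrong, and it is exactly the step where the paper has to do genuine work. You assert ``Since $\beta\in I$, the scheme-theoretic intersection $H(I)$ is contained in $\ker\beta$.'' But $I=\mathrm{Hom}(E_1,E_2)\beta$ lives inside $\mathrm{End}(E_2)$ (after the identification of Section~2.2); its elements are the endomorphisms $\alpha\beta$ of $E_2$ with $\alpha\in\mathrm{Hom}(E_1,E_2)$. Since $\mathrm{id}_{E_2}\notin\mathrm{Hom}(E_1,E_2)$ when $E_1\ne E_2$, the map $\beta$ itself is \emph{not} an element of $I$, so the inclusion $H(I)\subseteq H(\beta)$ does not follow this way. (The reverse inclusion $H(\beta)\subseteq H(I)$, which you prove via $\ker\beta\subseteq\ker(\alpha\beta)$, is the easy direction and is fine.) Because your computation $\mathrm{Nr}(I)=\mathrm{rank}\,H(I)=\deg\beta$ and the kernel-ideal conclusion both rest on $H(I)=H(\beta)$, the gap propagates through the whole argument.

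What is actually needed is a supply of $\alpha$'s whose kernels are ``coprime'' to $\ker\beta$. The paper picks a prime $\ell\nmid\deg\beta$, invokes connectivity of the supersingular $\ell$-isogeny graph to obtain $\alpha_0:E_1\to E_2$ of $\ell$-power degree, and observes that $H(\alpha_0\beta)\cap H([\deg\beta])=H(\beta)$: indeed $H(\alpha_0\beta)/H(\beta)$ has $\ell$-power rank while $H([\deg\beta])/H(\beta)$ has rank $\deg\beta$ prime to $\ell$, so the two cut each other down to $H(\beta)$. The same pair $\{\widehat\beta,\alpha_0\}$ is also what makes $\gcd\{\deg\alpha:\alpha\in\mathrm{Hom}(E_1,E_2)\}=1$, which is how the paper gets $\mathrm{Nrd}(I)=\deg\beta$ directly. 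Your route via Proposition~2.1(1), $\mathrm{Nr}(I)=\mathrm{rank}\,H(I)$, would be a perfectly good alternative once $H(I)=H(\beta)$ is in hand, and your handling of the companion index (either via the $i_{E_1}/i_{E_2}$ identification or via the dual isogeny and conjugation) is consistent with Section~2.2; but as written the proof needs the isogeny-graph-connectivity argument (or an equivalent coprimality device) inserted to close the central gap.
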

\begin{proof}
For every left(right) ideal $I$ of a maximal order of $B_{p,\infty}$, the reduced norm of $I$ can be defined as
\begin{equation*}
\rm{Nrd}(\it I)=\rm{gcd}\{\rm{Nrd}(\alpha):\alpha \in \it I\}.
\end{equation*}
Choose a prime $\ell$ prime to $\rm{deg}\beta$. Since the $\ell$-isogeny graph is connected, there is an isogeny $\alpha_0: E_{\rm 1} \to E_{\rm 2}$ with degree $\ell$-power. Hence
\begin{equation*}
\rm{Nrd}(Hom(\it{E}_{\rm 1},\it{E}_{\rm 2})\beta)=\rm{gcd}\{\rm{deg}(\alpha) \rm{deg}(\beta): \alpha \in Hom(\it{E}_{\rm 1},\it{E}_{\rm 2})\}= \rm{deg}(\beta).
\end{equation*}
Since the index of an left(right) ideal in a maximal order of the quaternion algebra is the square of the reduced norm of the ideal, we have \begin{equation*}[\rm{End}(\it{E}_{\rm{2}}): \rm{Hom}(\it{E}_{\rm1}, E_{\rm2})\beta]=[\rm{End}(\it{E}_{\rm{1}}): \beta\rm{Hom}(\it{E}_{\rm1}, E_{\rm2})]=(\rm{deg}\beta)^2.\end{equation*}
 Since $\rm{H}(\beta) \subseteq H(\rm{Hom}(\it{E}_{\rm{1}},\it{E}_{\rm{2}})\beta) \subseteq \rm{H}([deg(\beta)])$ and $\rm{H}(\alpha_{\rm 0} \beta)\cap  \it H([\rm{deg}\beta])=\rm{H}(\beta)$, it follows that
 \begin{equation*}
   \rm H(\rm{Hom}(\it{E}_{\rm{1}},\it{E}_{\rm{2}})\beta)=\rm{H}(\beta).
\end{equation*}
 Hence $\rm{Hom}(\it{E}_{\rm1}, E_{\rm2})\beta$ is the kernel ideal of $\beta$.
\end{proof}

\begin{remark} Kohel had proved that all ideals of $\rm{End}(\it{E}_{\rm{2}})$ could be in form of $\rm{Hom}(\it{E}_{\rm{1}},\it{E}_{\rm{2}})\beta$ in his thesis\cite{kohel1996endomorphism}. In fact, there is a one-to-one correspondence between the  isogenies from $E_{\rm 2}$ and the left ideals of $\rm{End}(\it{E}_{\rm{2}})$. Under this correspondence, the reduced norm of the ideal is  equal to the degree of the corresponding isogeny.
\end{remark}


 If $\rm{Hom}_{\it{k}}(\it{E}_{\rm1}, E_{\rm2})\ne \rm 0$ and $\rm{End}_{\it{k}}(\it{E}_{\rm{2}})(\ne \rm{End}(\it{E}_{\rm{2}}))$ is isomorphic to an imaginary quadratic order, we can analyze it similarly to the ordinary case which is omitted here.
\subsection{Ordinary case}
In the following, we say an isogeny $\beta$  doesn't have any backtracking if $\beta$  can't be written as $m\beta'$ for some $m\in \mathbb Z$ and  some isogeny $\beta'$. We assume $E_1$ and $ E_2$ are ordinary, and let $K$ be the endomorphism algebra of $E_2$, and denote its algebraic integer ring $\mathcal O_K$ by $\mathbb Z +\mathbb Z \gamma$. We begin with the following lemma and its corollary which guarantee the assumptions about the relations of the endomorphism rings in Lemma 3.3 and Theorem 3.2. Next we get the results for the isogenies of prime powers and finally use the results of the prime power case to prove the general case by the recurrence method.

\begin{lem}
 Given two ordinary elliptic curves $E_{\rm 1}, E_{\rm 2}$ defined over $k$. If there is an isogeny between them of degree $\ell^e$ where $\ell \ne p$, then $[\rm{End}(\it{E}_{\rm{2}}):\rm{End}(\it{E}_{\rm{1}})]=\ell^{e'}$ where $\vert e' \vert \leqslant e$ . Conversely, if $[\rm{End}(\it{E}_{\rm{2}}):\rm{End}(\it{E}_{\rm{1}})]=\ell^{e'}$, then the degree of every isogeny between them is divided by $\ell^{\vert e'\vert}$.
\end{lem}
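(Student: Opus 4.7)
The plan is to reduce both directions of the lemma to Proposition 2.3 by decomposing an arbitrary isogeny into a chain of prime-degree steps and tracking how each step alters the conductor of the endomorphism ring. Since $E_1, E_2$ are ordinary, all their endomorphisms are defined over $k$, and Proposition 3.1 ensures that every isogeny between them is likewise defined over $k$, so Proposition 2.3 applies at every link in the chain.

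For the forward direction, I would use that $\ell\neq p$ forces any isogeny $\beta\colon E_2\to E_1$ of degree $\ell^e$ to be separable, so $\ker(\beta)$ is a $\mathrm{Gal}(\bar k/k)$-stable abelian group of order $\ell^e$. I would pick a Galois-stable composition series $\{0\}=K_0\subsetneq K_1\subsetneq\cdots\subsetneq K_e=\ker(\beta)$ with $[K_i:K_{i-1}]=\ell$, giving a chain of $k$-rational $\ell$-isogenies $E_2\to E_2/K_1\to\cdots\to E_2/K_e\simeq E_1$. By Proposition 2.3 each link multiplies the index of the endomorphism rings by a factor in $\{1,\ell,\ell^{-1}\}$, so after composing all $e$ links we get $[\mathrm{End}(E_2):\mathrm{End}(E_1)]=\ell^{e'}$ where $e'$ is a sum of $e$ terms from $\{-1,0,1\}$, hence $|e'|\leq e$.

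For the converse, given an arbitrary isogeny $\beta\colon E_2\to E_1$ of degree $n$, I would first factor $\beta=\lambda\circ\phi_{q'}$ with $\phi_{q'}$ the Frobenius of inseparable degree $q'=p^s$ and $\lambda$ separable, then filter $\ker(\lambda)$ by Galois-stable subgroups of successive prime indices, producing a chain of prime-degree separable isogenies; together with the decomposition of $\phi_{q'}$ into $s$ successive Frobenius maps $\phi_p$, this exhibits $\beta$ as a composition of prime-degree isogenies. At each link of prime degree $\ell'\neq p$ Proposition 2.3 says the endomorphism ring index is a power of $\ell'$; in particular, when $\ell'\neq\ell$ the $\ell$-adic valuation of the conductor is unchanged, and when $\ell'=\ell$ it changes by at most one. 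At each $p$-link, by the kernel-ideal description of $\phi_p$ and $\hat\phi_p$ in the preliminaries these correspond to primes above $p$ in $\mathrm{End}(E)$, so any conductor change occurs at $p$ only and thus again leaves $v_\ell$ untouched. Summing the contributions, $|e'|$ is bounded by the number of $\ell$-degree links, which is at most $v_\ell(\deg\lambda)\leq v_\ell(n)$, giving $\ell^{|e'|}\mid\deg\beta$.

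The main obstacle is really the bookkeeping for the $p$-part in the converse: one needs to be confident that a degree-$p$ isogeny between ordinary curves alters the conductor only at $p$, which I would either extract from the prime-ideal factorization of $\phi_p$ and $\hat\phi_p$ recalled just before Section 2.3, or establish independently by noting that the endomorphism ring can only be enlarged/shrunk at primes dividing the degree of the isogeny. Once this is granted, the rest of the argument is a straightforward application of Proposition 2.3 to a prime-by-prime factorization of $\beta$.
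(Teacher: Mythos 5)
Your overall strategy for the forward direction is the same as the paper's: decompose $\beta$ into a chain of $\ell$-isogenies defined over $k$ and apply Proposition 2.3 at each link. But there is a real gap in how you produce the chain. You assert that $\ker\beta$ admits a Galois-stable composition series $\{0\}=K_0\subsetneq\cdots\subsetneq K_e=\ker\beta$ with each quotient of order $\ell$. This is false in general: if $\ker\beta$ is not cyclic, it contains $E_2[\ell]$, and when $\ell$ is inert in $\mathbb{Z}[\pi_{q,E_2}]$ the Frobenius acts on $E_2[\ell]\cong\mathbb{F}_\ell^2$ with irreducible characteristic polynomial $x^2-tx+q\pmod\ell$, so $E_2[\ell]$ has no Galois-stable subgroup of order $\ell$ at all. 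The cure is exactly the reduction the paper performs explicitly: first write $\beta=\ell^a\beta'$ with $\beta'$ having no backtracking, which forces $\ker\beta'$ to be cyclic; a cyclic $\ell$-group has a unique subgroup of each order, and these are characteristic, hence automatically Galois-stable. Multiplication by $\ell^a$ does not change the endomorphism rings, and $\deg\beta'=\ell^{e-2a}\leq\ell^e$, so the bound $|e'|\leq e$ survives the reduction. Without this step your chain simply need not exist. The same issue infects your converse, where you again filter $\ker\lambda$ by Galois-stable prime-index subgroups; the same strip-off-backtracking preprocessing repairs it, since removing a scalar factor only shrinks the degree and leaves the endomorphism rings unchanged.

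Modulo that repair, your converse is a cleaner and more explicit argument than the paper's. The paper argues through the geometry of the $\ell$-volcano (pointing to a curve $E_2'$ above $E_1$ that the path must traverse, and deferring to Kohel for a full proof), whereas you track $v_\ell$ of the conductor link by link: prime-degree steps of degree $\ell'\neq\ell,p$ cannot change $v_\ell$ by Proposition 2.3, degree-$\ell$ steps change it by at most one, and $p$-degree steps do nothing because $\mathbb{Z}[\pi_{q,E}]$ is already maximal at $p$ for ordinary curves ($p\nmid t$, so $p\nmid t^2-4q$). Your prime-by-prime valuation bookkeeping is arguably easier to verify than the volcano-path statement in the paper, so once the backtracking reduction is added your write-up is a legitimate and somewhat more self-contained alternative for the converse.
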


\begin{proof}
  Since distinct $\ell$-volcanoes are unconnected, $E_{\rm 1}$ and $E_{\rm 2}$ must be in the same $\ell$-volcano. Let $\beta: E_2\to E_1$ be an isogeny of degree $\ell^e$, $\beta$ can be looked as a path walking in the $\ell$-volcano since all $\ell$-isogenies are represented as a edge in the $\ell$-volcanos. We may as well assume that $\beta$ does not have backtracking, then the path can be showed as
\begin{equation*}
E'_0:=E_{\rm 2}\stackrel{\beta_{\rm 1}}{\to}E'_{\rm 1}\stackrel{\beta_{\rm 2}}{\to}\cdots \stackrel{\beta_e}{\to}E'_e=:E_{\rm 1}
\end{equation*}
where $\rm{ker}\beta_{\it i}=$$\ell^{e-i}\beta_{i-1}\cdots \beta_{\rm 1} ({\rm{ker}}\beta)$ and $\beta_i$ are also defined over $k$ for all $i$. According to Proposition 2.3, $[\rm{End}(\it{E}'_{i}):\rm{End}(\it{E}'_{i+\rm1})]=\rm 0$, $\ell$ or $\ell^{-1}$ for all $i$. Hence  $[\rm{End}(\it{E}_{\rm{2}}):\rm{End}(\it{E}_{\rm{1}})]=\ell^{e'}$ where $\vert e' \vert \leqslant e$.

Conversely, suppose $e'>0$, then there is an elliptic curve $E'_{\rm 2}$ with the same endomorphism ring with $E_{\rm 2}$ descending directly to $E_{\rm 1}$  such that every isogeny $\beta$ between them must pass  $E'_{\rm 2}$. Hence $\ell^{e'}\mid \rm{deg}\beta$. If $e'<0$, consider $\widehat{\beta}$ and the lemma holds. One can also refer to \cite[Proposition 5]{kohel1996endomorphism} for another proof.
\end{proof}

\begin{cor}
  Given two ordinary elliptic curves $E_{\rm 1}, E_{\rm 2}$ defined over $k$. If there is an isogeny between them of degree $m$ with factorization $p^e\ell_{\rm 1}^{e_{\rm 1}}\cdots \ell_s^{e_s}$, then $[\rm{End}(\it{E}_{\rm{2}}):\rm{End}(\it{E}_{\rm{1}})]$ has the form $\ell_{\rm 1}^{e'_{\rm 1}}\cdots \ell_s^{e'_s}$ where $\vert e'_i \vert \leqslant e_i$ for all $i$. Conversely, if $[\rm{End}(\it{E}_{\rm{2}}):\rm{End}(\it{E}_{\rm{1}})]$ has the form $\ell_{\rm 1}^{e'_{\rm 1}}\cdots \ell_s^{e'_s}$, then the degree of every isogeny between them is divided by $\ell_{\rm 1}^{\vert e'_{\rm 1}\vert }\cdots \ell_s^{\vert e'_s \vert}$.
\end{cor}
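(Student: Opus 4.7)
The plan is to reduce the corollary to the prime-power case already handled by Lemma 3.1, by decomposing an isogeny $\beta:E_{2}\to E_{1}$ of degree $m=p^{e}\ell_{1}^{e_{1}}\cdots \ell_{s}^{e_{s}}$ into a composition of isogenies each of whose degree is a prime power. Concretely, I would split the finite $k$-group scheme $\mathrm{H}(\beta)$ into its $p$-primary and $\ell_{i}$-primary components; these components are characteristic (hence Galois stable) subschemes, so they give a factorisation defined over $k$ of the form
\begin{equation*}
E_{2}=F_{0}\xrightarrow{\beta_{1}}F_{1}\xrightarrow{\beta_{2}}\cdots\xrightarrow{\beta_{s}}F_{s}\xrightarrow{\beta_{p}}F_{s+1}=E_{1},
\end{equation*}
with $\deg\beta_{i}=\ell_{i}^{e_{i}}$ for $1\le i\le s$ and $\deg\beta_{p}=p^{e}$.

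For each $i\in\{1,\ldots,s\}$, Lemma 3.1 applied to $\beta_{i}$ yields $[\mathrm{End}(F_{i-1}):\mathrm{End}(F_{i})]=\ell_{i}^{e'_{i}}$ for some integer with $|e'_{i}|\le e_{i}$. For the purely $p$-power step $\beta_{p}$, I would invoke the analysis of $p^{e}$-isogenies on ordinary curves given in Section 2.2: such an isogeny corresponds to an ideal of the form $\mathfrak{P}_{1}^{a}\mathfrak{P}_{2}^{e-a}$ in $\mathrm{End}(F_{s})$, where $\mathfrak{P}_{1},\mathfrak{P}_{2}$ are the two \emph{invertible} prime ideals of $\mathrm{End}(F_{s})$ over $p$. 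Invertibility of the kernel ideal forces the target $F_{s+1}$ to have the same endomorphism ring as $F_{s}$ under the canonical embedding into $K$, so $[\mathrm{End}(F_{s}):\mathrm{End}(F_{s+1})]=1$ and the $p$-part contributes nothing to the overall index.

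Combining the steps, I would use the fact that the conductors of orders in the imaginary quadratic field $K$ multiply telescopically along a chain, and that since the primes $p,\ell_{1},\ldots,\ell_{s}$ are pairwise distinct, the individual indices live in disjoint multiplicative components and so multiply to give
\begin{equation*}
[\mathrm{End}(E_{2}):\mathrm{End}(E_{1})]=\prod_{i=1}^{s}\ell_{i}^{e'_{i}}
\end{equation*}
with each $|e'_{i}|\le e_{i}$, proving the forward direction. For the converse, given $[\mathrm{End}(E_{2}):\mathrm{End}(E_{1})]=\prod_{i}\ell_{i}^{e'_{i}}$ and any isogeny $\beta$ between them of degree $N$, I would apply the forward direction just established to $\beta$ to write the same index as $\prod_{j}q_{j}^{f_{j}}$ for the non-$p$ primes $q_{j}$ dividing $N$, with $|f_{j}|\le v_{q_{j}}(N)$. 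Unique factorisation in $\mathbb{Q}^{\ast}$ then forces $f_{j(i)}=e'_{i}$ for the indices $j(i)$ with $q_{j(i)}=\ell_{i}$, so $\ell_{i}^{|e'_{i}|}\mid N$.

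The main obstacle I anticipate is justifying the $p^{e}$-step cleanly: one has to track which order in $K$ each $\mathrm{End}(F_{j})$ corresponds to under the identifications set up in Section 2, and then argue that composition with Frobenius-type and dual-Frobenius-type morphisms (corresponding to invertible prime ideals above $p$) leaves the conductor unchanged. A secondary subtlety is that the ``index'' in the statement is really a rational number, possibly less than $1$ when $\mathrm{End}(E_{1})$ strictly contains $\mathrm{End}(E_{2})$; the multiplicativity along the chain must be verified in this multiplicative (signed-exponent) sense, but this causes no trouble precisely because the primes $\ell_{i}$ are distinct and so their contributions sit in orthogonal coordinates of the conductor.
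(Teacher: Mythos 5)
Your proof is correct and follows essentially the same strategy as the paper: factor $\beta$ according to the primary decomposition of its kernel (the paper works with the group of geometric points, while you work with the group scheme $\mathrm{H}(\beta)$, which handles the inseparable part a bit more cleanly), apply Lemma 3.1 to each $\ell_i$-power step, and observe that the $p$-power step leaves the endomorphism ring unchanged because $\mathbb{Z}[\pi_{q}]$ is maximal at $p$ in the ordinary case. The one small difference is in the converse: you deduce the divisibility by applying the forward direction to an arbitrary isogeny and comparing prime factorizations, whereas the paper simply cites Kohel's Proposition 5 or invokes the volcano-path argument from Lemma 3.1; your version is a self-contained variant of the same idea.
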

\begin{proof}
 Suppose the isogeny is $\beta: E_2\to E_1$. Since $\rm{ker}\beta$ is a finite abelian group and can be a direct sum of subgroups of different prime powers. Let $\rm{ker}\beta=\it{G}_{\rm 1}+\cdots+G_s+G_{s+\rm 1}$ where $\vert G_i\vert=\ell_i^{e_i}$ for $i<s+1$ and $\vert G_{s+1}\vert \mid p^e$.
 Let
 \begin{equation*}
E_{\rm 2}=:E'_0\stackrel{\beta_{\rm 1}}{\to}E'_{\rm 1}\stackrel{\beta_{\rm 2}}{\to}\cdots \stackrel{\beta_s}{\to}E'_{s}\stackrel{\beta_{s+1}}{\to}E'_{s+1}:=E_{\rm 1}.
\end{equation*}
where $\rm{ker}\beta_{\it{i}}=\beta_{\it{i}-\rm1}\cdots \beta_{\rm 1} \it{G}_i$ and they are both defined over $k$. For every $i<s+1$, by Lemma 3.1, $[\rm{End}(\it{E}'_{i-\rm1}):\rm{End}(\it{E}'_{i})]=\ell_{i}^{e'_i}$ where $\vert e'_i \vert \leqslant e_i$. Hence $[\rm{End}(\it{E}_{\rm{2}}):\rm{End}(\it{E}_{\rm{1}})]$ has the form $\ell_{\rm 1}^{e'_{\rm 1}}\cdots \ell_s^{e'_s}$ where $\vert e'_i \vert \leqslant e_i$ for all $i$, since $[\rm{End}(\it{E}'_{s}):\rm{End}(\it{E}'_{s+\rm 1})]=\rm 0$.

 Conversely, see \cite[Proposition 5]{kohel1996endomorphism} or by Lemma 3.1.
\end{proof}

\begin{lem}
  Let $\beta: E_2 \to E_1$ be a separable isogeny of elliptic curves over $k$, then we have $\rm{I}({\rm{ker}}\beta)=\rm{Hom}(\it{E}_{\rm 1}, E_{\rm 2})\beta$.
\end{lem}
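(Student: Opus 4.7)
The plan is to establish the equality by two inclusions, relying essentially on the universal property of the quotient by a finite separable kernel (Silverman's Corollary III.4.11, which the paper has already cited). The statement is set up so that $\mathrm{I}(\ker\beta) \subseteq \mathrm{End}_k(E_2)$ by definition, and the right-hand side $\mathrm{Hom}(E_1,E_2)\beta$ will be interpreted as $\mathrm{Hom}_k(E_1,E_2)\beta$ (consistent with the convention of the paper and Proposition 3.1).

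First I would dispose of the easy inclusion ``$\supseteq$'': for any $\alpha \in \mathrm{Hom}_k(E_1,E_2)$, the composition $\alpha\beta$ is a $k$-endomorphism of $E_2$, and evaluating on $\ker\beta$ gives $(\alpha\beta)(\ker\beta)=\alpha(\infty)=\infty$, so $\alpha\beta \in \mathrm{I}(\ker\beta)$.

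For the non-trivial inclusion ``$\subseteq$'', take $\psi \in \mathrm{I}(\ker\beta)$; by definition $\psi \in \mathrm{End}_k(E_2)$ satisfies $\psi(\ker\beta)=\infty$, i.e.\ $\ker\beta \subseteq \ker\psi$. Since $\beta$ is separable, \cite[Corollary III.4.11]{silverman2009arithmetic} yields a unique isogeny $\alpha\colon E_1\to E_2$ over $\bar k$ with $\psi=\alpha\beta$. The goal then becomes to show $\alpha$ is defined over $k$, after which $\psi = \alpha\beta \in \mathrm{Hom}_k(E_1,E_2)\beta$.

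I expect the descent of $\alpha$ to $k$ to be the only real step requiring care, though it is short: for any $\sigma \in \mathrm{Gal}(\bar k/k)$, applying $\sigma$ to the equation $\psi = \alpha\beta$ and using that $\beta,\psi$ are $k$-rational gives $\alpha\beta = \alpha^\sigma\beta$; since $\beta$ is a surjective isogeny (in particular right-cancellable in the category of isogenies), we conclude $\alpha^\sigma = \alpha$ for all $\sigma$, so $\alpha$ is defined over $k$. This finishes the reverse inclusion and hence the lemma. The main obstacle, if any, is just taking care with the definition of $\mathrm{I}$ being over $k$-rational endomorphisms, which is exactly what forces us to verify the Galois-equivariance of $\alpha$; the rest is immediate from the quotient property of separable isogenies.
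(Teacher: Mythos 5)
Your proof is correct and follows essentially the same route as the paper: both directions reduce to the trivial inclusion $\mathrm{Hom}(E_1,E_2)\beta \subseteq \mathrm{I}(\ker\beta)$ together with the factorization $\psi=\alpha\beta$ from \cite[Corollary III.4.11]{silverman2009arithmetic} applied to the separable $\beta$. The only difference is that you additionally supply the Galois-descent argument showing $\alpha$ is $k$-rational (so that $\alpha\in\mathrm{Hom}_k(E_1,E_2)$), a detail the paper elides since the surrounding discussion (via Proposition 3.1) works under the hypothesis $\mathrm{End}_k(E_2)=\mathrm{End}(E_2)$, where the distinction vanishes.
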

\begin{proof}
  For any $\alpha \in \rm{I}({\rm{ker}}\beta)$, since $\rm{ker}\beta \subseteq \rm{ker}\alpha$ and $\beta$ is separable, there exists $\lambda \in \rm{Hom}(\it{E}_{\rm 1}, E_{\rm 2})$ such that $\alpha= \lambda\beta$. Hence $\rm{I}({\rm{ker}}\beta)\subseteq \rm{Hom}(\it{E}_{\rm 1}, E_{\rm 2})\beta$.
  So $\rm{I}({\rm{ker}}\beta)=\rm{Hom}(\it{E}_{\rm 1}, E_{\rm 2})\beta$, since $\rm{Hom}(\it{E}_{\rm 1}, E_{\rm 2})\beta \subseteq \rm{I}({\rm{ker}}\beta)$ according to definition.
\end{proof}

\begin{lem}
Given two ordinary elliptic curves $E_{\rm 1}, E_{\rm 2}$ defined over $k$ and $\beta:  E_{\rm 2}\to E_{\rm 1}$ an isogeny of degree $\ell^e$($\ell$ can be $p$), and suppose $[\rm{End}(\it{E}_{\rm{2}}):\rm{End}(\it{E}_{\rm{1}})]$ $=\ell^{e'}$ where $\vert e' \vert \leqslant e$, then $[\rm{End}(\it{E}_{\rm{2}}):\rm{Hom}(\it{E}_{\rm{1}},\it{E}_{\rm{2}})\beta]$ $=\ell^{e'}\rm{deg} \beta $ if $e'>0$  or $\rm{deg} \beta$, otherwise. In addition, suppose $\rm{End}(\it{E}_{\rm{2}})$ has conductor $\mathit f$, writing $\rm{End}(\it{E}_{\rm{2}})=\mathbb Z+\mathbb Z\mathit f\gamma$, and $\beta=\ell^{e_0}\beta'$ where $\beta'$ has no backtracking, then $\rm{Hom}(\it{E}_{\rm{1}},\it{E}_{\rm{2}})\beta=\mathbb Z \rm{deg}\beta +\mathbb Z$$ \ell^{e_0+e'}(b+\mathit f\gamma)$ if  $e'>0$ and $ \rm{Hom}(\it{E}_{\rm{1}},\it{E}_{\rm{2}})\beta=\mathbb Z \rm{deg}\beta$$  +\mathbb Z\ell^{e_0}(\ell^{\vert e'\vert}b+\mathit f\gamma)$ where $\ell^{\vert e' \vert} \vert \mathit f$, otherwise.
\end{lem}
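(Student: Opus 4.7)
My plan is to split the analysis into a reduction to the non-backtracking case, then case-work on the sign of $e'$ (and on whether $\ell=p$), using Lemma~3.2 and Proposition~2.2 as the main levers.

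First I would reduce to the case that $\beta$ itself has no backtracking. Since $\beta=[\ell^{e_0}]\beta'$ and $[\ell^{e_0}]$ is central in $\mathrm{End}(E_2)$, one has $\mathrm{Hom}(E_1,E_2)\beta=\ell^{e_0}\cdot\mathrm{Hom}(E_1,E_2)\beta'$. Scaling an ideal of $\mathcal{O}:=\mathrm{End}(E_2)$ by the integer $\ell^{e_0}$ multiplies its index in $\mathcal{O}$ by $\ell^{2e_0}$ and scales any $\mathbb{Z}$-basis by $\ell^{e_0}$; since $\deg\beta=\ell^{2e_0}\deg\beta'$, the asserted index and basis for $\beta$ follow from the corresponding claims for the non-backtracking $\beta'$ of degree $\ell^{e-2e_0}$ by straightforward rescaling.

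For non-backtracking $\beta'$ with $\ell\neq p$, I would apply Lemma~3.2 to identify $\mathrm{Hom}(E_1,E_2)\beta'$ with $I(\ker\beta')$; non-backtracking forces $\ker\beta'$ to be cyclic, since any non-cyclic subgroup of $E_2[\ell^r]$ of order $\ell^r$ contains $E_2[\ell]$ and would let $[\ell]$ be factored out. I would then split by the sign of $e'$. If $e'=0$ (horizontal), Proposition~2.4 directly gives that $\mathrm{Hom}(E_1,E_2)\beta'$ is an invertible $\mathcal{O}$-ideal of norm $\deg\beta'$, and the classification of primitive $\mathcal{O}$-ideals in Section~2.4 yields the basis $\mathbb{Z}\deg\beta'+\mathbb{Z}(b+f\gamma)$. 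If $e'<0$ (ascending), I would dualize: $\widehat{\beta'}$ is descending from $E_1$, whose endomorphism ring strictly contains $\mathcal{O}$ (which forces $\ell^{|e'|}\mid f$), and the output of the descending analysis can be conjugated back using the identifications set up in Section~2.2. If $e'>0$ (descending), I would decompose the non-backtracking walk in the $\ell$-volcano as a horizontal walk on the surface of length $\log_\ell\deg\beta'-e'$ followed by a pure descent of length $e'$; by Proposition~2.2 the kernel ideal factors correspondingly as the product of an invertible horizontal factor (handled as in the $e'=0$ case) and the kernel ideal of the pure descent. For a single descending step I would compute $I(\ker)$ directly by noting that the kernel point $P$ of order $\ell$ has $\mathcal{O}$-orbit of size $\ell$ (not $\ell^2$) precisely because the step is descending, giving the non-invertible ideal $\mathbb{Z}\ell+\mathbb{Z}(b_1+f\gamma)$ for an appropriate $b_1$; iterating and multiplying via Proposition~2.2 accumulates the factor $\ell^{e'}$ in the claimed position.

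For $\ell=p$, I would split $\beta'=\lambda\circ\phi_{p^{e_1},E_2}$ into its inseparable and separable parts as in Section~2.2. The inseparable factor's kernel ideal is the prime power $\mathfrak{P}_1^{e_1}$ identified there (with $\mathfrak{P}_1=\mathrm{Hom}(E^{(p)},E)\phi_{p,E}$), and the separable remainder $\lambda$ falls under the previous analysis; Proposition~2.2 combines the two. The main obstacle I anticipate is the descending case at $\ell\neq p$: pinning down the kernel ideal of a single descending step in the explicit form above with the correct residue $b_1$ (which encodes the choice of descending edge), and verifying that repeated multiplication through Proposition~2.2 preserves this non-invertible structure and produces the exact $\ell^{e'}$ contribution rather than collapsing. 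Once that is in place, the index formula is forced by the determinant of the resulting $\mathbb{Z}$-basis matrix, the ascending case is a duality bookkeeping in which $\ell^{|e'|}\mid f$ drops out of the endomorphism-ring inclusion, and the $\ell=p$ case is a concatenation with the Frobenius prime identified in Section~2.2.
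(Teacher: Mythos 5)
Your reduction to the non-backtracking case, your handling of $e'=0$ via Proposition~2.4, the dualization for $e'<0$, and the $\ell=p$ case all match the paper's argument in substance. The gap is exactly where you anticipated trouble: the descending case $e'>0$, and the problem is more serious than a missing residue computation.

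You have the orbit size backwards. For a descending step with kernel $\langle P\rangle$ of order $\ell$, the subgroup $\langle P\rangle$ is \emph{not} $\mathcal{O}$-stable (stability of the kernel is precisely what characterizes the horizontal/ascending steps in Kohel's classification), so $f\gamma(P)\notin\langle P\rangle$ and $\mathcal{O}P=E[\ell]$ has size $\ell^2$. Consequently $I(\ker)$ of a single descent is $\ell\mathcal{O}$, of index $\ell^2$, not the index-$\ell$ ideal $\mathbb{Z}\ell+\mathbb{Z}(b_1+f\gamma)$ you propose. Worse, $H(\ell\mathcal{O})=E[\ell]$ strictly contains the kernel of the descent, so a descending isogeny is not of the form $\varphi_J$ for any ideal $J$ (this is exactly what Corollary~3.3 later records). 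That means Proposition~2.2 cannot be applied to the factorization ``horizontal $\cdot$ descent'': the second factor has no kernel ideal, so there is no product $IJ$ to form, and the ``accumulation of $\ell^{e'}$'' you sketch has no mechanism to run. (A secondary issue: your decomposition ``surface-horizontal walk then pure descent'' presupposes $E_2$ lies on the surface; in general the intermediate walk may ascend above $E_2$'s level and return.)

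The paper sidesteps all of this by \emph{not} invoking Proposition~2.2 in the descending case. It factors $\beta=\beta_2\beta_1$ through the unique vertex $E'_2$ at $E_2$'s level lying directly above $E_1$ (only requiring $\mathrm{End}(E'_2)\cong\mathrm{End}(E_2)$, not that $\beta_1$ is horizontal), identifies $I(\ker\beta_1)=\mathbb{Z}\ell^{e-e'}+\mathbb{Z}(b+f\gamma)$ as an invertible ideal via Proposition~2.4, and then computes $I(\ker\beta)$ directly from the cyclic generator $P$: since $\ker\beta_1=\langle\ell^{e'}P\rangle$, one has $\ell^{e'}(b+f\gamma)(P)=(b+f\gamma)(\ell^{e'}P)=\infty$, giving $\ell^{e'}(b+f\gamma)\in I(\ker\beta)$. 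Combining this with $I(\ker\beta)\cap\mathbb{Z}=\mathbb{Z}\ell^e$ and the containment $I(\ker\beta)\subseteq\mathrm{End}(E_2)\cap\mathrm{End}(E_1)=\mathbb{Z}+\mathbb{Z}\ell^{e'}f\gamma$ pins down the ideal and its index without ever assigning a kernel ideal to the descent itself. You would need to replace your composition argument with something like this direct computation to close the gap.
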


\begin{proof}
First, we assume $\beta$ has no backtracking.  If $\ell\ne p$, then $\beta$ is separable, hence by Lemma 3.2, $\rm{Hom}(\it{E}_{\rm{1}},\it{E}_{\rm{2}})\beta=\rm I({\rm{ker}}\beta)$. It is enough to prove for $e'\geq 0$ . For $e'<0$, we can consider the dual isogeny $\widehat \beta$ where $\rm{I}({ \rm{ker}} \widehat\beta)=\overline{\rm{I}({ \rm{ker}}\beta)}$ (note that $\bar \gamma=-\gamma$ or $-\gamma+1$, so that $\overline{ \rm I({ \rm{ker}}\beta)}$ can written like above).

Now assume $e'>0$. Consider $E_2$ and $E_1$ in the $\ell$-volcano, then there is an elliptic curve $E'_{\rm 2}$ with the same endomorphism ring with $E_{\rm 2}$ descending directly to $E_{\rm 1}$  such that the path $\beta$ must pass  $E'_{\rm 2}$. Write $\beta=\beta_{\rm 2}\beta_{\rm 1}$ where $\beta_{\rm 1}: E_{\rm 2} \to E'_{\rm 2}$ and $\beta_{\rm 2}$ is the descending directly path from $E'_{\rm 2}$ to $E_{\rm 1}$.  If $e'=0$, let $E'_{\rm 2}=E_{\rm 1}$ and $\beta_{\rm 2}=id$.

Since $\beta$ has no backtracking, $\rm{ker}\beta$ is a cyclic group of order $\ell^e$. Since $\ell^e$ is the smallest integer belonging to $\rm{I}({\rm{ker}}\beta)$, we have $\rm{I}({ \rm{ker}}\beta)\cap \mathbb Z=\mathbb Z$$ \ell^e$.  Let $\rm{ker}\beta=\langle \it P\rangle$. Similarly, $\rm{ker}\beta_{\rm 1}$ is a cyclic group of order $\ell^{e-e'}$, and $\ell^{e'}\beta_{\rm 1} P=\infty$, then $\rm{ker}\beta_{\rm 1}=$ $\langle \ell^{e'}P\rangle$.

Since $\rm{End}(\it{E}_{\rm{2}})\cong \rm{End}(\it{E}'_{\rm 2})$, $\rm{I}({\rm{ker}}\beta_{\rm 1})$ is an invertible ideal of norm equal to deg $\beta_{\rm 1}=\ell^{e-e'}$  by Proposition 2.4. Similarly, it can be proved that $\rm{I}({\rm{ker}}\beta_{\rm 1})\cap\mathbb Z$ $=\mathbb Z \ell^{e-e'}$,  hence $\rm{I}({\rm{ker}}\beta_{\rm 1})$ has the form $\mathbb Z \ell^{e-e'} +\mathbb Z (b+\mathit f\gamma)$ for some $b\in \mathbb Z$.  Since $\rm{I}({\rm{ker}}\beta)$ is also an ideal of $\rm{End}(\it{E}_{\rm{1}})$, $\rm{I}({\rm{ker}}\beta) $ is contained in $\mathbb Z+\mathbb Z\ell^{e'}\mathit f\gamma$. Since $\ell^{e'}(b+\mathit f\gamma )(P)=(b+\mathit f\gamma)(\ell^{e'}P)=\infty$, it follows that $\ell^{e'}(b+\mathit f\gamma )\in$$ \rm{I}({\rm{ker}}\beta)$. Hence $\rm{I}({\rm{ker}}\beta)=$ $\mathbb Z \ell^e +\mathbb Z \ell^{e'}(b+\mathit f\gamma)$ and  $[\rm{End}(\it{E}_{\rm{2}}):\rm I({\rm{ker}}\beta)]= $ $\ell^{e'}\rm{deg} \beta $.

If $\ell=p$, then $\rm{End}(\it{E}_{\rm{1}})\cong \rm{End}(\it{E}_{\rm{2}})$ and  $\rm{Hom}(\it{E}_{\rm{1}},\it{E}_{\rm{2}})\beta$ is an invertible ideal of norm $\rm{deg}\beta$ by Proposition 2.4, since $\mathbb Z[\pi_{q,E_{\rm 2}}]$ is maximal at $p$.

 If  $\beta$ has backtracking, since $\rm{Hom}(\it{E}_{\rm{1}},\it{E}_{\rm{2}})$$\ell^{e_0}\beta'= $ $\ell^{e_0}\rm{Hom}(\it{E}_{\rm{1}},\it{E}_{\rm{2}})\beta'$, the lemma holds.
\end{proof}

\begin{cor}
Given ordinary elliptic curves $E_{\rm 1}, E_{\rm 2}$ defined over $k$ where $[\rm{End}(\it{E}_{\rm{2}}): \rm{End}(\it{E}_{\rm{1}})]=$ $\ell^{e'}$, $\ell\ne p$, $e'\in \mathbb Z_{>0}$. Let $\beta:E_2\to E_1$ be an isogeny without backtracking of prime power degree, and suppose $\rm{End}(\it{E}_{\rm{2}})=\mathbb Z+\mathbb Z\mathit f\gamma$, then $\rm{Hom}(\it{E}_{\rm{1}},\it{E}_{\rm{2}})=$ $\mathbb Z\widehat\beta+\mathbb Z \frac{\ell^{e'}(b+\mathit f\gamma)\widehat \beta}{\rm{deg} \beta}$ for some integer $b$. In addition, if $\rm{deg} \beta=\ell^{e'}$, then $\rm{Hom}(\it{E}_{\rm{1}},\it{E}_{\rm{2}})=\mathbb Z\widehat\beta+\mathbb Z\mathit f\gamma\widehat\beta$.

\end{cor}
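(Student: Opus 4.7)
The plan is to obtain the corollary as an almost immediate consequence of Lemma~3.3 by right-multiplying in $K$ by $\widehat{\beta}/\deg\beta$, which is the multiplicative inverse of $\beta$ in the endomorphism algebra. Throughout, I work under the paper's convention that $\mathrm{Hom}(E_1,E_2)\beta$ is identified with its image inside $K$ via $i_{E_2}^{-1}$, so that right-multiplication by an element of $K^{\times}$ makes sense at the level of $\mathbb{Z}$-lattices.

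First, I would apply Lemma~3.3 in the case $e'>0$. Since $\beta$ has no backtracking, the decomposition $\beta=\ell^{e_0}\beta'$ forces $e_0=0$, and the lemma gives
\begin{equation*}
\mathrm{Hom}(E_1,E_2)\beta \;=\; \mathbb{Z}\,\deg\beta \;+\; \mathbb{Z}\,\ell^{e'}(b+f\gamma)
\end{equation*}
for some integer $b$. Next, using $\beta\widehat{\beta}=\deg\beta$ in $K$, the element $\widehat{\beta}/\deg\beta$ is a two-sided inverse of $\beta$ in $K$, and right-multiplication by it is a $\mathbb{Z}$-linear bijection of lattices in $K$. Applying it to both sides yields
\begin{equation*}
\mathrm{Hom}(E_1,E_2) \;=\; \mathbb{Z}\,\widehat{\beta} \;+\; \mathbb{Z}\,\frac{\ell^{e'}(b+f\gamma)\widehat{\beta}}{\deg\beta},
\end{equation*}
which is the first assertion.

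For the second assertion, I simply substitute $\deg\beta=\ell^{e'}$ in the second generator, which simplifies to $(b+f\gamma)\widehat{\beta}=b\widehat{\beta}+f\gamma\widehat{\beta}$. Since $b\widehat{\beta}\in \mathbb{Z}\widehat{\beta}$, it can be absorbed into the first generator, yielding $\mathrm{Hom}(E_1,E_2)=\mathbb{Z}\widehat{\beta}+\mathbb{Z}\,f\gamma\widehat{\beta}$.

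The only step requiring any care is the legitimacy of the right-multiplication by $\widehat{\beta}/\deg\beta$: one must recall that both $\mathrm{Hom}(E_1,E_2)\beta$ and $\mathrm{Hom}(E_1,E_2)$ are being viewed as sublattices of $K$ via the fixed isomorphisms $i_{E_1},i_{E_2}$, and that the identification of $\mathrm{Hom}(E_1,E_2)\beta$ with $\beta\,\mathrm{Hom}(E_1,E_2)$ discussed in Section~2 shows that $\mathrm{Hom}(E_1,E_2)\cdot\beta\cdot(\widehat{\beta}/\deg\beta)=\mathrm{Hom}(E_1,E_2)$ inside $K$. Once this identification is in place, no further computation is needed; the corollary is a one-line rewriting of Lemma~3.3.
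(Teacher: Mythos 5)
Your proof is correct and follows essentially the same route as the paper: apply Lemma~3.3 to $\mathrm{Hom}(E_1,E_2)\beta$ with $e_0=0$ (forced by the no-backtracking hypothesis), transport the resulting lattice description back to $\mathrm{Hom}(E_1,E_2)$ by right-multiplication by $\widehat{\beta}/\deg\beta$ inside $K$, and absorb $b\widehat{\beta}$ into $\mathbb{Z}\widehat{\beta}$ when $\deg\beta=\ell^{e'}$. The only discrepancy is cosmetic: the paper's proof contains a typographical slip, writing $\mathrm{Hom}(E_1,E_2)\widehat{\beta}$ where Lemma~3.3 actually yields $\mathrm{Hom}(E_1,E_2)\beta$, and your version states it correctly.
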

\begin{proof}
Since $[\rm{End}(\it{E}_{\rm{2}}): \rm{End}(\it{E}_{\rm{1}})]=$ $\ell^{e'}$, we have $\ell\mid \rm{deg}\beta$. By Lemma 3.3,
\begin{equation*}\rm{Hom}(\it{E}_{\rm{1}},\it{E}_{\rm{2}})\widehat\beta=\mathbb Z \rm{deg}\beta +\mathbb Z \ell^{e'}(b+\mathit f\gamma)
\end{equation*} for some integer $b$, then $\rm{Hom}(\it{E}_{\rm{1}},\it{E}_{\rm{2}})=\frac{\rm{Hom}(\it{E}_{\rm{1}},\it{E}_{\rm{2}})\beta\widehat\beta}{\rm{deg}\beta}$ $=\mathbb Z\widehat\beta+\mathbb Z \frac{\ell^{e'}(b+\mathit f\gamma)\widehat \beta}{\rm{deg} \beta}$. If  $\rm{deg} \beta=\ell^{e'}$, then $b$ can be 0.
\end{proof}

\begin{cor}
  Given ordinary elliptic curves $E_{\rm 1}, E_{\rm 2}$ defined over $k$  where $[\rm{End}(\it{E}_{\rm{2}}):\rm{End}(\it{E}_{\rm{1}})]=\ell^{e'}$ where $e'\in \mathbb Z$. Then every isogeny from $E_{\rm 2}$ to $E_{\rm 1}$ can correspond to an ideal of $\rm{End}(\it{E}_{\rm{2}})$ if and only if  $e' \leqslant 0$.
\end{cor}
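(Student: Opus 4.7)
The plan is to first reformulate the question: by Lemma 3.2 any ideal $J$ with $H(J)=\ker\beta$ satisfies $J\subseteq \mathrm{Hom}(E_1,E_2)\beta$, so $\beta$ corresponds to some ideal if and only if the canonical candidate $\mathrm{Hom}(E_1,E_2)\beta$ itself satisfies $H(\mathrm{Hom}(E_1,E_2)\beta)=\ker\beta$. Both implications then reduce to computing this $H$ and comparing with $\ker\beta$.

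For the contrapositive of $(\Rightarrow)$, assume $e'>0$ and construct a bad $\beta$. Take a pure descending path $\beta_\ell:E_2\to E'$ of length $e'$ in the $\ell$-volcano (possible because $E_1$ lies $e'$ levels below $E_2$, so $E_2$ has at least $e'$ descending levels available), then a horizontal $\psi:E'\to E_1$ of degree prime to $\ell$ (available because $E'$ and $E_1$ share $\mathrm{End}(E_1)$ and are $k$-isogenous, via Proposition 2.4(3) or directly via the ideal-class-group action on ordinary curves with that endomorphism ring). Setting $\beta:=\psi\beta_\ell$, Lemma 3.3 gives $\mathrm{Hom}(E',E_2)\beta_\ell=\ell^{e'}\mathrm{End}(E_2)$, so $\mathrm{Hom}(E_1,E_2)\beta \subseteq \ell^{e'}\mathrm{End}(E_2)$. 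Any corresponding ideal $J$ would lie in $\ell^{e'}\mathrm{End}(E_2)$ and therefore satisfy $H(J)\supseteq E_2[\ell^{e'}]$; a short order comparison using $|\ker\beta|=\ell^{e'}\deg\psi$ with $\gcd(\deg\psi,\ell)=1$ then contradicts $H(J)=\ker\beta$.

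For $(\Leftarrow)$, assume $e'\leq 0$ and take any $\beta:E_2\to E_1$. Factor $\beta$ into prime-power-degree pieces by decomposing $\ker\beta$ into primary components; by Proposition 2.2 (composition corresponds to the product ideal) it suffices to show each piece corresponds to an ideal. For a piece of prime-power degree coprime to $\ell$ (including the $p$-part), Corollary 3.1 forces it to be horizontal, so Proposition 2.4 makes the associated $\mathrm{Hom}$-ideal invertible of norm equal to its degree, and Proposition 2.1(2) gives $H$ of rank equal to that norm, hence equal to the kernel. The same argument handles any horizontal $\ell$-step when $e'=0$. When $e'<0$, further factor the $\ell$-part into single-step ascending $\ell$-isogenies, reducing to a single such step $\eta:F\to F'$.

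The main obstacle is this single ascending step, where the ideal $\mathrm{Hom}(F',F)\eta=\mathbb{Z}\ell+\mathbb{Z}(\ell b+f\gamma)$ (from Lemma 3.3, with $f=\ell f_1$ the conductor of $\mathrm{End}(F)$) is not invertible, so the norm-equals-rank shortcut is unavailable. The plan here is to compute $H(\mathrm{Hom}\,\eta)=F[\ell]\cap\ker(f\gamma)$ and study the induced action of $f\gamma$ on the two-dimensional $\mathbb{F}_\ell$-vector space $F[\ell]$. Using the integral quadratic relation $\gamma^2=T\gamma-N$ with $T,N\in\mathbb{Z}$ together with $f=\ell f_1$, one sees $(f\gamma)^2=0$ on $F[\ell]$; on the other hand $f\gamma$ is nonzero on $F[\ell]$, since otherwise \cite[Corollary III.4.11]{silverman2009arithmetic} would yield $\alpha\in\mathrm{End}(F)$ with $f\gamma=\ell\alpha$, forcing $f_1\gamma\in\mathbb{Z}+\mathbb{Z} f\gamma$, a contradiction. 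Hence $f\gamma|_{F[\ell]}$ is a nonzero nilpotent operator on a two-dimensional space, so its kernel and image coincide and are one-dimensional; the image lies in $\ker\eta$ because $\eta\circ(f\gamma)=\ell\cdot(f_1\gamma)\circ\eta$ vanishes on $F[\ell]$, and matching orders yields $H(\mathrm{Hom}\,\eta)=\ker\eta$, completing the proof.
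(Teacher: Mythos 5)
Your forward direction is fine and in fact more economical than the paper's: to refute ``every isogeny corresponds'' it suffices to produce one bad $\beta$, and your choice (a pure descent of degree $\ell^{e'}$ composed with a horizontal of degree prime to $\ell$), together with the containment $\rm{Hom}(\it E_{\rm 1},\it E_{\rm 2})\beta\subseteq\ell^{e'}\rm{End}(\it E_{\rm 2})$ forcing $E_2[\ell^{e'}]\subseteq H(J)$ while $E_2[\ell^{e'}]\nsubseteq\ker\beta$, is correct. The nilpotency computation for a single ascending $\ell$-step (showing $f\gamma$ is a nonzero square-zero operator on $F[\ell]$) is also correct and is a genuinely different, rather clean argument from the one the paper uses.

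The gap is in the converse when $e'<0$. You reduce to the claim that the $\ell$-primary part of $\beta$ factors into single-step \emph{ascending} $\ell$-isogenies, but an arbitrary non-backtracking $\ell$-power isogeny from a vertex at level $i_2$ of the volcano to a vertex at the shallower level $i_1$ need not ascend the whole way: if the target curve is not the unique level-$i_1$ ancestor of the source, the path climbs to a common ancestor (or to the crater) and then \emph{descends}. A descending single $\ell$-step $\eta: F\to F'$ does not correspond to any ideal of $\rm{End}(\it F)$ --- your own computation in the other direction shows $\rm{Hom}(\it F',\it F)\eta=\ell\,\rm{End}(\it F)$, whose $H$ is all of $F[\ell]$, strictly larger than $\ker\eta$ --- so Proposition 2.2 cannot be applied factor by factor, and your argument covers only those $\beta$ whose $\ell$-path is the straight ascending chain, not every isogeny. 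The paper avoids this by never splitting the $\ell$-part: Lemma~3.3 gives the ideal of the entire backtracking-free $\ell$-power piece $\beta_2$ as $\mathbb Z\,\rm{deg}\beta_{\rm 2}+\mathbb Z(\ell^{\vert e'\vert}b+\mathit f\gamma)$, then $\ker\beta_2\subseteq H\subseteq E_3[\rm{deg}\beta_{\rm 2}]$ while $E_3[\ell]\nsubseteq H$ (else $(\mathit f/\ell)\gamma\in\rm{End}(\it E_{\rm 3})$), and a subgroup count forces $H=\ker\beta_2$. To salvage your approach you would need to run the operator argument on the full generator $\ell^{\vert e'\vert}b+\mathit f\gamma$ acting on $E'[\rm{deg}\beta_{\rm 2}]$ rather than on isolated single steps, or simply fall back to the paper's counting argument.
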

\begin{proof}
If $e'=0$, then $\rm{End}(\it{E}_{\rm{1}})\cong\rm{End}(\it{E}_{\rm{2}})$. Hence for any isogeny $\beta\in \rm{Hom}(\it{E}_{\rm{2}},\it{E}_{\rm{1}})$, $\beta$ corresponds to the invertible ideal $\rm{Hom}(\it{E}_{\rm{1}},\it{E}_{\rm{2}})\beta$. Otherwise, $\ell\ne p$ and $\ell \mid \rm{deg}(\beta)$. Factor $\beta$ as $\beta_2\beta_1$ where deg$\beta_2$ is $\ell$-power and $\ell \nmid$ deg$\beta_1$. Let $E_3$ be the target elliptic curve of $\beta_1$ where $\rm{End}(\it{E}_{\rm{3}})\cong\rm{End}(\it{E}_{\rm{2}})$.

If $e'< 0$, since
\begin{equation*}
 \rm{ker}(\beta_2)\subseteq H(\rm{Hom}(\it{E}_{\rm{1}},\it{E}_{\rm{3}})\beta_{\rm2})\subseteq E_{\rm 3}[\rm{deg}\beta_2]
\end{equation*}
and $E_{\rm 3}[\ell]\nsubseteq \rm{H}(\rm{Hom}(\it{E}_{\rm{1}},\it{E}_{\rm{3}})\beta_{\rm2})$ by Lemma 3.3 (otherwise $\frac{\mathit f\gamma}{\ell}  \in \rm{End}(\it{E}_{\rm{3}})$, which is a contradiction), we obtain $\rm{H}(\rm{Hom}(\it{E}_{\rm{1}},\it{E}_{\rm{3}})\beta_{\rm2})=\rm{ker}(\beta_2)$, hence $\rm{Hom}(\it{E}_{\rm{1}},\it{E}_{\rm{3}})\beta_{\rm2}$ is the kernel ideal for $\beta_2$. So $\beta$ can correspond to an ideal by Proposition 2.2, since $\beta_1$ can also correspond to an ideal.

 If $e'> 0$, suppose that $\beta$ has no backtracking. If $\beta$ can correspond to some ideal, then $\beta\widehat{\beta_1}=\beta_2\rm{deg}\beta_1$ corresponds to an ideal. Hence $\beta_2$ corresponds to an ideal. By lemma 3.3, we have $E_{\rm 3}[\ell^{e'}]\subseteq \rm{H}(\rm{Hom}(\it{E}_{\rm{1}},\it{E}_{\rm{3}})\beta_{\rm2})$, then the isogeny corresponding to the kernel ideal $\rm{Hom}(\it{E}_{\rm{1}},\it{E}_{\rm{3}})\beta_{\rm2}$ can be written as $\ell^{e'}\beta'$ for some isogeny $\beta'$ from $E_3$, hence $\rm{Hom}(\it{E}_{\rm{1}},\it{E}_{\rm{3}})\beta_{\rm2}$ isn't the kernel ideal for $\beta_2$ since $\beta_2$ has no backtracking. Let $J$ be the kernel ideal of $\beta_2$ such that $\rm{H}(\it J)=\rm{H}(\beta_{\rm2})$, then $J=\rm{I}(H(\it J))=\rm{I}(ker(\beta_2))=\rm{Hom}(\it{E}_{\rm{1}},\it{E}_{\rm{3}})\beta_{\rm2}$ since $\beta_2$ is separable. Thus we obtain a contradiction. $\beta$ can not correspond to an ideal.

\end{proof}

Now take out the condition that the degrees of the isogenies are prime powers. For simplicity, we define $\rho(e)$ as
\begin{equation*}
\rho(e)=\left \{
\begin{aligned}
  e, & \   \rm{if}\  \it e>\rm0. \\
  0, & \   \rm{otherwise}.
\end{aligned}
\right.
\end{equation*}

\begin{thm}
Given two ordinary elliptic curves $E_{\rm 1}, E_{\rm 2}$ defined over $k$ and $\beta:  E_{\rm 2}\to E_{\rm 1}$ an isogeny of degree $m$, let the prime factorization of $m$ be $p^e\ell_{\rm 1}^{e_{\rm 1}}\cdots \ell_s^{e_s}$ where $e\in \mathbb Z_{\geqslant 0}$ and $e_i \in \mathbb Z_{>0}$. Suppose $[\rm{End}(\it{E}_{\rm{2}}):\rm{End}(\it{E}_{\rm{1}})]=$ $\ell_{\rm 1}^{e'_{\rm 1}}\cdots \ell_s^{e'_s}$ where $\vert e'_i \vert \leqslant e_i$ for all $i$, then
\begin{equation*}
[\rm{End}(\it{E}_{\rm{2}}):\rm{Hom}(\it{E}_{\rm{1}},\it{E}_{\rm{2}})\beta]=\prod\limits_{\it{i}=\rm 1}^s \ell_i^ {\rho(e'_i)}\rm{deg} \beta.
\end{equation*} In addition, suppose $\rm{End}(\it{E}_{\rm{2}})$ has conductor $\mathit f$, writing $\rm{End}(\it{E}_{\rm{2}})=\mathbb Z+\mathbb Z\mathit f\gamma$, and $\beta=m'\beta'$ where $m'\in \mathbb Z$  and $\beta'$ has no backtracking, then
\begin{equation*}
\rm{Hom}(\it{E}_{\rm{1}},\it{E}_{\rm{2}})\beta=\it{m}'(\mathbb Z \rm{deg}\beta' +\mathbb Z \prod\limits_{\it{i}=\rm 1}^s \ell_{\it i}^ {\rho(\it e'_i)}(\it b\prod\limits_{\it{i}=\rm 1}^s \ell_{\it i}^ {\rho(\it e'_i)-e'_i}+\mathit f\gamma))
\end{equation*} for some integer $b$ where $\prod\limits_{i=1}^s \ell_i^ {\rho(e'_i)-e'_i}\mid \mathit f$.
\end{thm}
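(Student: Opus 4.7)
The plan is to decompose $\beta$ into pieces of prime-power degree, apply Lemma 3.3 to each piece separately, and glue them back together using Proposition 2.2 and a Chinese Remainder argument.

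First, reduce to $m'=1$ and factor by primary components. If $\beta=m'\beta'$, then $\mathrm{Hom}(E_1,E_2)\beta=m'\cdot\mathrm{Hom}(E_1,E_2)\beta'$, so the index and the stated basis both scale by the appropriate power of $m'$, and it suffices to treat $\beta$ without backtracking. The kernel group scheme $\mathrm{H}(\beta)$ then splits as $G_1\oplus\cdots\oplus G_s\oplus G_{s+1}$, with $G_i$ the $\ell_i$-primary part (rank $\ell_i^{e_i}$) for $i\leq s$ and $G_{s+1}$ the $p$-primary (inseparable) part of rank $p^e$. Each $G_i$ is Galois-stable, yielding a factorization over $k$
\begin{equation*}
E_2=E'_0\to E'_1\to\cdots\to E'_{s+1}=E_1,
\end{equation*}
where the $i$th step $\lambda_i:E'_{i-1}\to E'_i$ has prime-power degree $\ell_i^{e_i}$ (or $p^e$) and no backtracking. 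By Corollary 3.1, $[\mathrm{End}(E'_{i-1}):\mathrm{End}(E'_i)]=\ell_i^{e'_i}$ for $i\leq s$, matching the hypothesis, and the final Frobenius step is horizontal.

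Next, apply Lemma 3.3 to each $\lambda_i$: writing $f_{i-1}$ for the conductor of $\mathrm{End}(E'_{i-1})$, the lemma furnishes an explicit kernel ideal $I_i\subset\mathrm{End}(E'_{i-1})$ of index $\ell_i^{\rho(e'_i)+e_i}$ (or $p^e$ for $i=s+1$). By Proposition 2.2, the product $I_1I_2\cdots I_{s+1}$ formed inside $K$ equals $\mathrm{Hom}(E_1,E_2)\beta$ as a left ideal of $\mathrm{End}(E_2)$. Because the $I_i$ have pairwise coprime integer norms, $I_i+I_j$ equals the ambient order whenever $i\neq j$, so their product coincides with their intersection and the indices multiply:
\begin{equation*}
[\mathrm{End}(E_2):\mathrm{Hom}(E_1,E_2)\beta]=\prod_{i=1}^{s}\ell_i^{\rho(e'_i)}\cdot\deg\beta,
\end{equation*}
which is the first claim. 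For the explicit $\mathbb{Z}$-basis, any integral ideal of $\mathbb{Z}+\mathbb{Z}f\gamma$ of this index has the form $\mathbb{Z}\deg\beta+\mathbb{Z}(b'+f\prod_i\ell_i^{\rho(e'_i)}\gamma)$; matching with each $I_i$ after localization at $\ell_i$ determines $b'$ modulo $\deg\beta$ via CRT and simultaneously forces $\prod_i\ell_i^{\rho(e'_i)-e'_i}\mid b'$, so $b'=b\prod_i\ell_i^{\rho(e'_i)-e'_i}$ for a unique integer $b$ modulo $\prod_i\ell_i^{e_i-\rho(e'_i)}\cdot p^e$, as stated.

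The main obstacle is the conductor bookkeeping. Each $I_i$ is given by Lemma 3.3 in terms of its own conductor $f_{i-1}$, and one must re-express the product in the fixed basis $\mathbb{Z}+\mathbb{Z}f\gamma$ of $\mathrm{End}(E_2)$. In particular, the divisibility $\prod_i\ell_i^{\rho(e'_i)-e'_i}\mid f$ asserted in the statement has to be verified; this is equivalent to the fact that each ascending segment at $\ell_i$ (those with $e'_i<0$) can only exist when $\ell_i\mid f_{i-1}$ by Proposition 2.5(2), which, iterated along the ascending path, forces $\ell_i^{|e'_i|}\mid f$ whenever $e'_i<0$.
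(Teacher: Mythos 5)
Your overall plan — decompose $\beta$ into prime-power pieces and control each piece via Lemma 3.3 — is the right shape, and it matches the paper's strategy. But the step that glues the pieces back together has a genuine gap.

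You assert that, by Proposition 2.2, the product $I_1 I_2\cdots I_{s+1}$ formed inside $K$ equals $\mathrm{Hom}(E_1,E_2)\beta$. Proposition 2.2 does not give this. It concerns kernel ideals and the isogenies they induce: if $\varphi_I$ has kernel $\mathrm{H}(I)$ and $\varphi_J$ has kernel $\mathrm{H}(J)$, then $\varphi_J\varphi_I$ has kernel $\mathrm{H}(IJ)$. To apply it to the chain $\beta=\lambda_{s+1}\cdots\lambda_1$ you would need $\ker\lambda_i=\mathrm{H}(I_i)$ for each $i$, i.e., each $I_i=\mathrm{Hom}(E'_i,E'_{i-1})\lambda_i$ to be the kernel ideal of $\lambda_i$. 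This fails exactly when $e'_i>0$: in that case $\mathrm{H}(I_i)$ strictly contains $\ker\lambda_i$ (for instance, when $\deg\lambda_i=\ell_i^{e'_i}$ and $\lambda_i$ is descending, one finds $I_i=\ell_i^{e'_i}\mathrm{End}(E'_{i-1})$, so $\mathrm{H}(I_i)=E'_{i-1}[\ell_i^{e'_i}]$ has rank $\ell_i^{2e'_i}$ while $\ker\lambda_i$ has order $\ell_i^{e'_i}$). This is precisely the failure mode analyzed in Corollary 3.3, and it is why the theorem is non-trivial. So the equality $I_1\cdots I_{s+1}=\mathrm{Hom}(E_1,E_2)\beta$ cannot be cited from Proposition 2.2 and needs its own argument; even the containment $\supseteq$ (that every $\alpha\beta$, $\alpha\in\mathrm{Hom}(E_1,E_2)$, lies in the product) is not automatic, since a given $\alpha$ need not factor through the intermediate curves $E'_i$.

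A second, smaller gap: the coprimality/CRT step is stated as if the $I_i$ were all ideals of a single order, but $I_i$ is an ideal of $\mathrm{End}(E'_{i-1})$, and these orders change with $i$. The statement "$I_i+I_j$ equals the ambient order" is ambiguous, and the index of the product in $\mathrm{End}(E_2)$ is not the naive product of $[\mathrm{End}(E_2):I_i]$. The paper avoids both problems by never forming the product of ideals: it fixes the ambient order $\mathrm{End}(E_2)=\mathbb{Z}+\mathbb{Z}f\gamma$ once, proves the claimed $\mathbb{Z}$-basis for $\mathrm{Hom}(E'_i,E_2)\beta_i\cdots\beta_1$ by induction on $i$, and at each step pins down the ambiguous factor $t'$ by squeezing it between two divisibility constraints — one from the cyclic kernel $\langle P\rangle$ of $\beta$ on the $E_2$ side, and one from the dual chain on the $E_1$ side. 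That two-sided argument is the mechanism your proposal is missing, and it is where the explicit form (and in particular the factor $\prod\ell_i^{\rho(e'_i)}$) actually gets proved. Your last paragraph on $\prod_i\ell_i^{\rho(e'_i)-e'_i}\mid f$ is essentially correct and agrees with how the paper reads this off from $\mathrm{End}(E_2)\cap\mathrm{End}(E'_i)$.
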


\begin{proof}
Similarly, it is enough to prove the case $\beta$ has no backtracking. If $\beta$ has inseparable degree $p^e$ and $e>0$, then we factor $\beta$ as
\begin{equation*}
E_{\rm 2}\stackrel{\phi_{p^e,E_{\rm 2}}}{\longrightarrow}E_{\rm 2}^{(p^e)}\stackrel{\lambda}{\longrightarrow}E_{\rm 1}
\end{equation*}
where $\lambda$ is separable of degree $\ell_{\rm 1}^{e_{\rm 1}}\cdots \ell_s^{e_s}$. We consider $\widehat\beta=\widehat \phi_{p^e,E_{\rm 2}}\widehat\lambda $ and $\rm{Hom}(\it{E}_{\rm{2}},\it{E}_{\rm{1}})\widehat\beta $ instead.  
Assume $\beta$ is separable, let $\beta$ be factored as $\beta_{s+1} \cdots \beta_{\rm 2}\beta_{\rm 1}$ where $\rm{deg}\beta_{\it i}$ $=\ell_i^{e_i}$ and $\rm{deg}\beta_{\it s+\rm 1}=\it{p}^e$ as in the proof of Corollary 3.1. So we have
\begin{equation*}
E_{\rm 2}\stackrel{\beta_{\rm 1}}{\to}E'_{\rm 1}\stackrel{\beta_{\rm 2}}{\to}\cdots \stackrel{\beta_s}{\to}E'_{s}\stackrel{\beta_{s+1}}{\to}E'_{s+1}:=E_{\rm 1}.
\end{equation*}
 In the following, we prove the theorem recursively.

Suppose $\rm{End}(\it{E}_{\rm{2}})=\mathbb Z+\mathbb Z \mathit f \gamma$, let $\ell_{s+1}=p$ and $e_{s+1}=e$, then \begin{equation*}
\rm{End}(\it{E}'_i)=\mathbb Z+\mathbb Z \ell_{\rm 1}^{e'_{\rm 1}}\cdots \ell_i^{e'_i }\mathit f \gamma
                   \end{equation*}
for all $i$ and $e'_{s+1}=0$. We claim that
 $\rm{Hom}(\it{E}'_i,E_{\rm 2})\beta_i\cdots\beta_{\rm 1}$ has the form \begin{equation*}
\mathbb Z \prod\limits_{j=\rm 1}^i \ell_{\it j}^{\it e_j}+\mathbb  Z\prod\limits_{\it{j}=\rm 1}^i \ell_j^{\rho(e'_j)} (b\prod\limits_{j=\rm 1}^{i} \ell_j^ {\rho(e'_j)-e'_j}+\mathit f \gamma)
                                       \end{equation*}
for some integer $b$.

Since $\beta$ has no backtracking, every $\rm{ker} \beta_{\it{i}}$  and $\rm{ker} \beta$ are cyclic. Let $\rm{ker}\beta=\langle \it P\rangle$, then
\begin{equation*}
\rm{ker} (\beta_{\it{i}}\cdots\beta_{\rm 1})=\langle\prod\limits_{\it{j}=i+\rm 1}^{\it s+\rm 1} \ell_{\it j}^{\it e_j}\it P\rangle.
\end{equation*}Hence we have $\rm{I}({\rm{ker}} (\beta_{\it{i}}\cdots\beta_{\rm 1}))\cap \mathbb Z=\mathbb Z\prod\limits_{\it{j}=\rm 1}^{\it{i}}\ell_{\it j}^{\it e_j}$, since $\rm{ker}( \beta_{\it{i}}\cdots\beta_{\rm 1})$ is a cyclic group of order $\prod\limits_{j=\rm 1}^{i}\ell_{\it j}^{\it e_j}$. We also have $\rm{I}({\rm{ker}} (\beta_{\it{i}}\cdots\beta_{\rm 1}))\subseteq (\rm{End}(\it{E}_{\rm{2}})\cap \rm{End}(\it{E}'_i))= \mathbb Z+\mathbb Z \prod\limits_{j=\rm 1}^i \ell_j^{\rho(e'_j)}\mathit f \gamma$ where $\prod\limits_{j=\rm 1}^i \ell_j^{\rho(e'_j)-e'_j}\mid \it{f}$.

When $i=1$, by Lemma 3.3, $\rm{Hom}(\it{E}'_{\rm 1},E_{\rm 2})\beta_{\rm 1}$ has the form
\begin{equation*}
 \mathbb Z \rm{deg}\beta_{\rm 1}+\mathbb  Z \ell_{\rm 1}^{\rho(e'_{\rm 1})} (\ell_{\rm 1}^{\rho(e'_{\rm 1})-e'_{\rm 1}}\it b+\mathit f \gamma)
\end{equation*}
for some integer $b$. It holds.

If  $\rm{Hom}(\it{E}'_i,E_{\rm 2})\beta_i\cdots\beta_{\rm 1}$ has the form $\mathbb Z \prod\limits_{j=\rm 1}^i \ell_{\it j}^{\it e_j}+\mathbb  Z\prod\limits_{j=\rm 1}^i \ell_j^{\rho(e'_j)} (b\prod\limits_{j=\rm 1}^{i} \ell_j^ {\rho(e'_j)-e'_j}+\mathit f \gamma)$ for some integer $b$, we consider  $\rm{Hom}(\it{E}'_{\it i+\rm 1},E_{\rm 2})\beta_{\it i+\rm 1}\cdots\beta_{\rm 1}$. Since $\rm{I}({\rm{ker}}(\beta_{\it i+\rm 1}\cdots\beta_{\rm 1}))$  is an ideal of $\mathbb Z+\mathbb Z\mathit f \gamma$
 and $\rm{I}({\rm{ker}} (\beta_{\it i+\rm 1}\cdots\beta_{\rm 1}))\cap \mathbb Z=\mathbb Z\prod\limits_{\it{j}=\rm 1}^{\it i+\rm 1}\ell_{\it j}^{\it e_j}$, $\rm{I}({\rm{ker}}(\beta_{\it i+\rm 1}\cdots\beta_{\rm 1}))$ has the form
 $\mathbb Z \prod\limits_{j=\rm 1}^{\it i+\rm 1} \ell_{\it j}^{\it e_j}+\mathbb  Z t (b'+\mathit f \gamma)$
for some integers $t$ and $b'$, where $t\mid\prod\limits_{j=\rm 1}^{\it i+\rm 1} \ell_{\it j}^{\it e_j}$.
Since $\rm{I}({\rm{ker}} (\beta_{\it i+\rm 1}\cdots\beta_{\rm 1}))$$\subseteq  \mathbb Z+\mathbb Z \prod\limits_{j=\rm 1}^{\it i+\rm 1} \ell_j^{\rho(e'_j)}\mathit f \gamma$, we have  $\prod\limits_{j=\rm 1}^{\it i+\rm 1} \ell_j^{\rho(e'_j)} \mid t$. Let $t':=t/(\prod\limits_{j=\rm 1}^{\it i+\rm 1} \ell_j^{\rho(e'_j)})$, then $ \rm{I}({\rm{ker}}(\beta_{\it{i}+\rm{1}}\cdots\beta_{\rm 1}))$ has the form
\begin{equation*}
 \mathbb Z \prod\limits_{\it{j}=\rm 1}^{\it i+\rm 1} \ell_{\it j}^{\it e_j}+\mathbb  Z \it t' \prod\limits_{j=\rm 1}^{\it i+\rm 1} \ell_j^{\rho(e'_j)}(\it b'+\mathit f \gamma).
\end{equation*}
Since $\ell_{\it i+\rm 1}^{e_{\it i+\rm 1}} \prod\limits_{j=\rm 1}^i \ell_j^{\rho(e'_j)}$ $(\prod\limits_{j=\rm 1}^{i} \ell_j^ {\rho(e'_j)-e'_j}b+\mathit f \gamma)(\prod\limits_{j=i+2}^{s+1} \ell_j^{e_j}P)=\infty$, it follows that
\begin{equation*}
  \ell_{\it i+\rm 1}^{e_{\it i+\rm 1}} \prod\limits_{j=\rm 1}^i \ell_j^{\rho(e'_j)}(b\prod\limits_{j=\rm 1}^{i} \ell_j^ {\rho(e'_j)-e'_j}+\mathit f \gamma)\in \rm{I}({\rm{ker}}(\beta_{\it i+\rm 1}\cdots\beta_{\rm 1})),
\end{equation*} hence $t'\mid \ell_{\it i+\rm 1}^{e_{\it i+\rm 1}}$ and $\prod\limits_{j=\rm 1}^{i} \ell_j^ {\rho(e'_j)-e'_j}\mid b'$.

On the other hand, consider
\begin{equation*}
  \rm{Hom}(\it{E}_{\rm 2},E'_{\it i+\rm 1})(\widehat{\beta_{\it i+\rm 1}\cdots\beta_{\rm 1}})=\mathbb Z \prod\limits_{j=\rm 1}^{\it i+\rm 1} \ell_{\it j}^{\it e_j}+\mathbb  Z t' \prod\limits_{j=\rm 1}^{\it i+\rm 1} \ell_j^{\rho(e'_j)}(b'+\mathit f \bar{\gamma}).
\end{equation*}
 (note that $\widehat{\beta_{\it i+\rm 1}\cdots\beta_{\rm 1}}$ may be inseparable when $i=s$, so we don't use symbol like $\rm{I}({\rm{ker}}\beta)) here$.) Similarly, by Lemma 3.3, $\rm{Hom}(\it{E}'_i,E'_{\it i+\rm 1})\widehat{\beta_{\it i+\rm 1}}$ has the form
 \begin{equation*}
   \mathbb Z \ell_{\it i+\rm 1}^{e_{\it i+\rm 1}}+\mathbb  Z\ell_{\it i+\rm 1}^{\rho(e'_{\it i+\rm 1})}(\ell_{\it i+\rm 1}^{\rho(e'_{\it i+\rm 1})-e'_{\it i+\rm 1}}b''+\prod\limits_{j=\rm 1}^{i}\ell_j^{e'_j}\mathit f \bar{\gamma})
 \end{equation*} for some integer $b''$. Let $\rm{ker}(\widehat{\beta_{\it i+\rm 1}\cdots\beta_{\rm 1}})=\langle \it Q\rangle$, then $\rm{ker}(\widehat{\beta_{\it{i}}\cdots\beta_{\rm 1}})=\langle \widehat{\beta_{\it i+\rm 1}}(\it Q)\rangle$ is a cyclic group of order $\ell_{\rm 1}^{e_{\rm 1}}\cdots \ell_i^{e_i}$.
  Write $\ell_{\it i+\rm 1}^{\rho(e'_{\it i+\rm 1})}(\ell_{\it i+\rm 1}^{\rho(e'_{\it i+\rm 1})-e'_{\it i+\rm 1}}b''+\prod\limits_{j=\rm 1}^{i}\ell_j^{e'_j}\mathit f \bar{\gamma})=\alpha \widehat{\beta_{\it i+\rm 1}}$ for some $\alpha \in \rm{Hom}(\it{E}'_i,E'_{\it i+\rm 1})$, then we  have $\ell_{\rm 1}^{e_{\rm 1}}\cdots \ell_i^{e_i}\alpha \widehat{\beta_{\it i+\rm 1}}(Q)=\infty$. If $i\neq s$, then $\widehat{\beta_{\it i+\rm 1}\cdots\beta_{\rm 1}}$ is separable, and we have
 \begin{equation*}
   \ell_{\rm 1}^{e_{\rm 1}}\cdots \ell_i^{e_i}\ell_{\it i+\rm 1}^{\rho(e'_{\it i+\rm 1})}(\ell_{\it i+\rm 1}^{\rho(e'_{\it i+\rm 1})-e'_{\it i+\rm 1}}b''+\prod\limits_{j=\rm 1}^{i}\ell_j^{e'_j}\mathit f \bar{\gamma})\in \rm{Hom}(\it{E}_{\rm 2},E'_{\it i+\rm 1})(\widehat{\beta_{\it i+\rm 1}\cdots\beta_{\rm 1}}),
 \end{equation*}
 hence $t'\mid  \ell_{\rm 1}^{e_{\rm 1}}\cdots \ell_i^{e_i}$ and $\ell_{\it i+\rm 1}^{\rho(e'_{\it i+\rm 1})-e'_{\it i+\rm 1}}\mid b'$. Thus $t'=1$.
 If $i=s$, then $\rm{End}(\it{E}'_s)\cong \rm{End}(\it{E}'_{s+\rm 1})$ and there is an isogeny $\alpha'$ from $E'_{s+1}$ to $E'_s$ of degree prime to $\ell_{s+1}$. We have \begin{equation*}
   \alpha'\alpha \ell_{\rm 1}^{e_{\rm 1}}\cdots \ell_s^{e_s}\in \rm{I}({\rm{ker}}(\widehat{\beta_{\it s}\cdots\beta_{\rm 1}})).
 \end{equation*} Since $\rm{I}({\rm{ker}}(\widehat{\beta_{\it s}\cdots\beta_{\rm 1}}))=\rm{Hom}(\it{E}_{\rm 2},E'_s)\widehat{\beta_{s}\cdots\beta_{\rm 1}}$, it follows that
 \begin{equation*}
   \widehat\alpha'\alpha'\alpha \ell_{\rm 1}^{e_{\rm 1}}\cdots \ell_s^{e_s}\widehat{\beta_{s+1}}\in \rm{Hom}(\it E_{\rm 2},E'_{\it s+\rm 1})(\widehat{\beta_{\it s+\rm 1}\cdots\beta_{\rm 1}})
 \end{equation*} i.e.
 \begin{equation*}
   \rm{deg} \alpha' \ell_{\rm 1}^{e_{\rm 1}}\cdots \ell_s^{e_s}\ell_{s+1}^{\rho(e'_{s+1})}(b''+\prod\limits_{\it j=\rm 1}^{s}\ell_{\it j}^{\it e'_j}\mathit f \bar{\gamma})\in\mathbb Z \prod\limits_{\it j=\rm 1}^{s+1} \ell_{\it j}^{\it e_j}+\mathbb  Z \it t' \prod\limits_{\it j=\rm 1}^{s+1} \ell_{\it j}^{\rho(e'_{\it j})}(\it b'+\mathit f \bar{\gamma}).
 \end{equation*} Hence $t'\mid \rm{deg} \alpha' \ell_{\rm 1}^{\it e_{\rm 1}}\cdots \ell_{\it s}^{\it e_s}$. Thus $t'=1$.

\end{proof}

\begin{cor}
Given ordinary elliptic curves $E_{\rm 1}, E_{\rm 2}$ defined over $k$  where $[\rm{End}(\it{E}_{\rm{2}}):\rm{End}(\it{E}_{\rm{1}})]=\ell_{\rm 1}^{e'_{\rm 1}}\cdots \ell_s^{e'_s}$ for different primes $\ell_i$ and $e'_i\in \mathbb Z_{\ne0}$. Let $\beta: E_2 \to E_1$ be an isogeny without backtracking, suppose $\rm{End}(\it{E}_{\rm{2}})=\mathbb Z+\mathbb Z\mathit f\gamma$, then
\begin{equation*}
 \rm{Hom}(\it{E}_{\rm{1}},\it{E}_{\rm{2}})=\mathbb Z \widehat\beta+\mathbb Z( \prod\limits_{i=\rm 1}^s \ell_i^ {\rho(e'_i)}(b\prod\limits_{i=\rm 1}^s \ell_i^ {\rho(e'_i)-e'_i}+\mathit f\gamma)\widehat\beta)/(\rm{deg}\beta)
\end{equation*} for some integer $b$.
\end{cor}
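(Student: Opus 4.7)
The plan is to deduce this corollary directly from Theorem 3.2, using the standard trick $\beta\widehat{\beta}=[\deg\beta]$ to pass from the ideal $\mathrm{Hom}(E_1,E_2)\beta$ (which lives in $\mathrm{End}(E_2)$ and is exactly what Theorem 3.2 computes) back to $\mathrm{Hom}(E_1,E_2)$ itself. This mirrors the argument used for Corollary 3.2 in the prime-power case; the only difference is that we now have several primes at once, and the exponents have the more elaborate form coming from Theorem 3.2.

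Concretely, first I would note that because $\beta$ has no backtracking, we have $m' = 1$ in the notation of Theorem 3.2, so
\begin{equation*}
\mathrm{Hom}(E_1,E_2)\beta \;=\; \mathbb{Z}\,\deg\beta \;+\; \mathbb{Z}\,\Bigl(\prod_{i=1}^s \ell_i^{\rho(e'_i)}\Bigr)\Bigl(b\prod_{i=1}^s \ell_i^{\rho(e'_i)-e'_i}+f\gamma\Bigr)
\end{equation*}
for some integer $b$, with the divisibility $\prod_{i=1}^s \ell_i^{\rho(e'_i)-e'_i}\mid f$ ensuring that the displayed element actually lies in $\mathrm{End}(E_2)=\mathbb{Z}+\mathbb{Z}f\gamma$.

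Next I would right-multiply this identity by $\widehat{\beta}$. Since $\beta\widehat{\beta}=[\deg\beta]$, we have $\mathrm{Hom}(E_1,E_2)\beta\widehat{\beta} = (\deg\beta)\,\mathrm{Hom}(E_1,E_2)$; equivalently, $\mathrm{Hom}(E_1,E_2) = \mathrm{Hom}(E_1,E_2)\beta\widehat{\beta}/\deg\beta$. Substituting the $\mathbb{Z}$-basis from the previous step and dividing term-by-term by $\deg\beta$, the first generator $(\deg\beta)\widehat{\beta}/\deg\beta$ becomes $\widehat{\beta}$, and the second generator becomes
\begin{equation*}
\Bigl(\prod_{i=1}^s \ell_i^{\rho(e'_i)}\Bigr)\Bigl(b\prod_{i=1}^s \ell_i^{\rho(e'_i)-e'_i}+f\gamma\Bigr)\widehat{\beta}/\deg\beta,
\end{equation*}
yielding exactly the claimed expression.

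The main obstacle I anticipate is essentially bookkeeping rather than any conceptual difficulty: one must verify that the second generator really is an element of $\mathrm{Hom}(E_1,E_2)$ (not merely of its tensor product with $\mathbb{Q}$), which follows from the fact that Theorem 3.2 already places it in $\mathrm{End}(E_2)$ and then right-multiplication by $\widehat{\beta}$ combined with the factor $1/\deg\beta$ is absorbed exactly by the relation $\beta\widehat{\beta}=[\deg\beta]$. There is no issue with the integrality of $b$ or with the appearance of $\prod \ell_i^{\rho(e'_i)-e'_i}$ dividing $f$, since both are inherited directly from Theorem 3.2.
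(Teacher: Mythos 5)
Your proposal is correct and takes essentially the same route as the paper: the paper's proof of this corollary is simply ``Similar to the proof of Corollary 3.2,'' and that earlier proof is precisely the argument you spell out — apply Theorem 3.2 (the multi-prime analogue of Lemma 3.3) with $m'=1$ to get the $\mathbb{Z}$-basis of $\mathrm{Hom}(E_1,E_2)\beta$, then right-multiply by $\widehat\beta$ and divide by $\deg\beta$ using $\beta\widehat\beta=[\deg\beta]$.
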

\begin{proof}
Similar to the proof of Corollary 3.2.
\end{proof}

\begin{cor}
Given ordinary elliptic curves $E_{\rm 1}, E_{\rm 2}$ defined over $k$  where $[\rm{End}(\it{E}_{\rm{2}}):\rm{End}(\it{E}_{\rm{1}})]=\ell_{\rm 1}^{e'_{\rm 1}}\cdots \ell_s^{e'_s}$ for different primes $\ell_i$ and $e'_i\in \mathbb Z_{\ne0}$ Then every isogeny from $E_{\rm 2}$ to $E_{\rm 1}$ can correspond to an ideal of $\rm{End}(\it{E}_{\rm{2}})$ if and only if all $e'_i \leqslant 0$.
\end{cor}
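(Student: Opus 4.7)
The plan is to bootstrap from Corollary 3.3 (the prime-power case) to the general case by decomposing $\beta$ according to its prime-power factors and gluing pieces via Proposition 2.2; the converse will be obtained directly from the explicit description of $\mathrm{Hom}(E_1,E_2)\beta$ furnished by Theorem 3.2.

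For the forward direction (all $e'_i\le 0\Rightarrow\beta$ corresponds to an ideal), I will factor $\beta$ as $\beta_{s+1}\circ\beta_s\circ\cdots\circ\beta_1$ with $\deg\beta_i=\ell_i^{e_i}$ for $1\le i\le s$ and $\deg\beta_{s+1}=p^e$, exactly as in the proofs of Corollary 3.1 and Theorem 3.2. By Corollary 3.1, each intermediate step satisfies $[\mathrm{End}(E'_{i-1}):\mathrm{End}(E'_i)]=\ell_i^{e'_i}$ with $e'_i\le 0$, so Corollary 3.3 supplies a kernel ideal $J_i$ for each $\beta_i$; iterating Proposition 2.2 then assembles the $J_i$ into a kernel ideal for the composition $\beta$.

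For the reverse direction (some $e'_{i_0}>0$), assume without loss of generality that $\beta$ has no backtracking, and treat the separable case first. Set $N:=\prod_{i=1}^s\ell_i^{\rho(e'_i)}$, which is strictly greater than $1$ by hypothesis. From the explicit formula in Theorem 3.2 both generators of $\mathrm{Hom}(E_1,E_2)\beta$ are divisible by $N$ inside $\mathrm{End}(E_2)$: the first because $\rho(e'_i)\le e_i$ for every $i$ so $N\mid\deg\beta$, the second manifestly since it already carries the factor $N$. Hence $\mathrm{Hom}(E_1,E_2)\beta\subseteq N\cdot\mathrm{End}(E_2)$, and therefore $E_2[N]\subseteq H(\mathrm{Hom}(E_1,E_2)\beta)$. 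On the other hand, since $\beta$ has no backtracking, $\ker\beta$ contains no full prime-torsion subgroup $E_2[\ell]$ (otherwise $\beta$ would factor through $[\ell]$), and in particular $E_2[\ell_{i_0}]\not\subseteq\ker\beta$, so $H(\mathrm{Hom}(E_1,E_2)\beta)\supsetneq\ker\beta$. Combined with Lemma 3.2's identification $\mathrm{Hom}(E_1,E_2)\beta=I(\ker\beta)$, this forces any purported kernel ideal $J$ for $\beta$ to satisfy $J=I(H(J))=I(\ker\beta)=\mathrm{Hom}(E_1,E_2)\beta$, whence $H(J)\supsetneq\ker\beta$, contradicting $H(J)=\ker\beta$. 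For inseparable $\beta$, write $\beta=\lambda\phi_{p^e}$ and pass to $\widehat\beta=\widehat{\phi_{p^e}}\widehat\lambda$, which is separable for ordinary $E$; the identification $\mathrm{Hom}(E_2,E_1)\widehat\beta=\overline{\mathrm{Hom}(E_1,E_2)\beta}$ from Section 2 lets the separable argument, with $E_1,E_2$ interchanged and the signs of the $e'_i$ reversed, close the case.

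The main obstacle is the necessity direction: one must verify cleanly that the two generators of $\mathrm{Hom}(E_1,E_2)\beta$ supplied by Theorem 3.2 are jointly divisible by $N$ inside $\mathrm{End}(E_2)$, which yields the critical containment $E_2[N]\subseteq H(\mathrm{Hom}(E_1,E_2)\beta)$; the rest of the argument is then a strict incompatibility between this containment and the no-backtracking structure of $\ker\beta$.
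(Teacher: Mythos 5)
Your forward direction and your separable-case converse argument are both correct, and the converse is packaged a bit more cleanly than the paper's: you read the containment $\mathrm{Hom}(E_1,E_2)\beta\subseteq N\cdot\mathrm{End}(E_2)$ (hence $E_2[N]\subseteq H(\mathrm{Hom}(E_1,E_2)\beta)$) directly off the explicit formula in Theorem~3.2, combine it with Lemma~3.2 and the fact that a backtracking-free separable $\beta$ has cyclic kernel, and get a contradiction in one stroke. The paper instead defers to ``similarly to the proof of Corollary~3.3,'' which buries the same mechanism inside a prior argument. Since the paper only asserts the existence of a non-corresponding isogeny, your separable-only contradiction already closes the ``only if'' direction of the corollary (a separable, backtracking-free $\beta:E_2\to E_1$ of prime-to-$p$ degree always exists in this setting).

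The flaw is in your handling of the inseparable case, which is both unnecessary and incorrect as written. Passing from $\beta:E_2\to E_1$ to $\widehat\beta:E_1\to E_2$ replaces $[\mathrm{End}(E_2):\mathrm{End}(E_1)]=\prod\ell_i^{e'_i}$ with $[\mathrm{End}(E_1):\mathrm{End}(E_2)]=\prod\ell_i^{-e'_i}$. Your separable argument for $\widehat\beta$ then needs some exponent $-e'_i>0$, i.e.\ some $e'_i<0$; but the hypothesis of the converse is only that some $e'_{i_0}>0$, which is compatible with \emph{all} $e'_i>0$. In that case every $-e'_i<0$, $N'=\prod\ell_i^{\rho(-e'_i)}=1$, and the argument for $\widehat\beta$ is vacuous. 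Moreover, even when the reversed hypothesis does hold, your argument would show that $\widehat\beta$ fails to correspond to an ideal of $\mathrm{End}(E_1)$, and you supply no link between that and $\beta$ failing to correspond to an ideal of $\mathrm{End}(E_2)$; the conjugation identity $\mathrm{Hom}(E_2,E_1)\widehat\beta=\overline{\mathrm{Hom}(E_1,E_2)\beta}$ alone does not equate $H(\widehat\beta)=H(I(H(\widehat\beta)))$ with $H(\beta)=H(I(H(\beta)))$. The paper's route is the correct one here: write $\beta=\beta_2\beta_1$ with $\deg\beta_1$ a $p$-power (hence $\beta_1$ horizontal, since $\mathbb Z[\pi]$ is maximal at $p$ for ordinary curves, and $\beta_1$ corresponds to an invertible ideal) and $\beta_2$ separable with the \emph{same} index data $\prod\ell_i^{e'_i}$; your separable argument then applies to $\beta_2$, and Proposition~2.2 with the invertibility of $\beta_1$'s ideal transports the non-correspondence of $\beta_2$ to that of $\beta$. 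Alternatively, simply note that a separable counterexample suffices, so the inseparable case need not be considered at all.
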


\begin{proof}
Given any $\beta \in \rm{Hom}(\it{E}_{\rm{2}},\it{E}_{\rm{1}})$.  If all $e'_i \leqslant 0$, factor $\beta$ as $\beta_{r} \cdots \beta_{\rm 2}\beta_{\rm 1}$ for some integer $r$ and $\beta_1,\dots,\beta_r$ are isogenies of distinct prime power degrees. By  Corollary 3.3, every $\beta_i$ can correspond to an ideal, then $\beta$ can correspond to an ideal.

If there exists some $i$ such that $e'_i>0$. Factor $\beta$ as $\beta_2\beta_1$ where $\rm{deg}\beta_1$ is $p$-power and deg$\beta_2$ is prime to $p$. Then $\beta_2$ is separable. It can be proved similarly with the proof of Corollary 3.3 that $\beta$ can not correspond to an ideal.
\end{proof}

According to Theorem 3.1 and Corollary 3.5, whether an isogeny can correspond to a kernel ideal is only up to the two elliptic curves, more precisely, up to their endomorphism rings.

\section{The non-trivial minimal degree}
Let $\rm{Md}_{\it{k}}(\it{E}_{\rm{2}},\it{E}_{\rm{1}}):=\rm{min}\{deg\beta:\beta \in \rm{Hom}_{\it{k}}(\it{E}_{\rm{2}},\it{E}_{\rm{1}}), \rm{deg}\beta \ne 1\}$, $\rm{Md}_{\it{k}}(\it{E}):=\rm{Md}_{\it{k}}(\it{E},E)$,  $\rm{Md}(\it{E}_{\rm{2}},\it{E}_{\rm{1}}):=\rm{Md}_{\bar{\it{k}}}(\it{E}_{\rm{2}},\it{E}_{\rm{1}}), \rm{Md}(\it{E}):=\rm{Md}_{\bar{\it{k}}}(\it{E})$. Obviously, $\rm{Md}_{\it{k}}(\it{E}_{\rm{2}},\it{E}_{\rm{1}})=\rm{Md}_{\it{k}}(\it{E}_{\rm{1}},\it{E}_{\rm{2}})$. We call it the non-trivial minimal degree.

First we consider $\rm{Md}(\it{E})$. It suffices to check whether $E$ has an endomorphism of degree 2 or 3, since $2\in \rm{End}(\it{E})$. We need to check this with the help of the Deuring's lifting theorem\cite{deuring1941constructing,serge1987elliptic}.

\begin{thm}
Let E be an elliptic curve defined over a finite field and let $\alpha$ be an endomorphism of E. Then there exists an elliptic curve $\widetilde E$ defined over a finite extension H of $\mathbb Q$ and an endomorphism $\widetilde \alpha $ of   $\widetilde E$  s.t. E is the reduction of  $\widetilde E$ modulo some prime ideal of the ring of algebraic integers of $K$ and the reduction of  $\widetilde \alpha $  is $\alpha$.
\end{thm}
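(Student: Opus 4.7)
The plan is to split the argument by the type of $E$ (ordinary vs.\ supersingular), since the structure of $\mathrm{End}(E)$ differs markedly between the two cases, and to use the theory of complex multiplication as the lifting tool. The unifying observation is that even when $\mathrm{End}(E)$ is non-commutative (the supersingular setting), the subring $\mathbb{Z}[\alpha]$ generated by a single endomorphism is always commutative: $\alpha$ satisfies its reduced characteristic polynomial of degree at most $2$ over $\mathbb{Z}$, so $\mathcal{O}:=\mathbb{Z}[\alpha]$ is either $\mathbb{Z}$ (in which case the theorem is trivial, since any CM curve works) or an order in an imaginary quadratic field.

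In the ordinary case, $\mathrm{End}(E)$ itself is an imaginary quadratic order $\mathcal{O}'\supseteq\mathcal{O}$. By standard CM theory there is an elliptic curve $\widetilde E$ defined over the ring class field $H$ of $\mathcal{O}'$ with $\mathrm{End}(\widetilde E)\cong\mathcal{O}'$, realized analytically as $\widetilde E\simeq\mathbb{C}/\mathcal{O}'$ and then descended to $H$. Since $E$ is ordinary the prime $p$ splits in $\mathcal{O}'$, so one can choose a prime $\mathfrak{p}$ of $H$ above $p$ at which $\widetilde E$ has good ordinary reduction; the candidate $j$-invariants are the finitely many roots of the ring class polynomial of $\mathcal{O}'$, and the Deuring reduction--lifting dictionary lets one select $\mathfrak{p}$ so that the reduced curve is $\bar k$-isomorphic to $E$. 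The reduction map $\mathrm{End}(\widetilde E)\hookrightarrow\mathrm{End}(E)$ is then an injection of the abstract order $\mathcal{O}'$ into itself, so (up to adjusting the isomorphism by an automorphism of $\mathcal{O}'$) the element $\widetilde\alpha\in\mathrm{End}(\widetilde E)$ corresponding to $\alpha\in\mathcal{O}\subseteq\mathcal{O}'$ reduces to $\alpha$ by construction.

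The supersingular case is the substantive one. Here $\mathrm{End}(E)$ is a maximal order in $B_{p,\infty}$ and cannot be lifted wholesale; instead one only lifts the commutative subring $\mathcal{O}=\mathbb{Z}[\alpha]$. Take $\widetilde E\simeq\mathbb{C}/\mathcal{O}$ descended to a suitable number field $H$, so that $\mathrm{End}(\widetilde E)\cong\mathcal{O}$. Because $\alpha$ comes from a supersingular endomorphism ring, $p$ does not split in $\mathcal{O}$, which is exactly the criterion forcing $\widetilde E$ to have supersingular reduction at every prime of $H$ above $p$. The delicate point, and the main obstacle, is choosing a prime $\mathfrak{p}\mid p$ of $H$ so that the reduction of $\widetilde E$ is $\bar k$-isomorphic to $E$ \emph{and} the reduction of the CM action of $\alpha$ on $\widetilde E$ agrees with the original endomorphism $\alpha$ of $E$. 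Equivalently, one must realize the given optimal embedding $\mathcal{O}\hookrightarrow\mathrm{End}(E)$ as the specialization of an honest CM lift. This matching is typically handled by the moduli-theoretic description of CM points on the supersingular locus of the modular curve, or via Honda--Tate theory: every optimal embedding of an imaginary quadratic order into a maximal order of $B_{p,\infty}$ arises from the reduction of some CM elliptic curve at an appropriate prime. Granted this, setting $\widetilde\alpha$ to the element of $\mathrm{End}(\widetilde E)$ corresponding to $\alpha\in\mathcal{O}$ completes the construction.
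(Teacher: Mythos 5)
The paper does not actually prove this statement: Theorem 4.1 is Deuring's lifting theorem, which the authors cite to Deuring's original 1941 paper and Lang's \emph{Elliptic Functions} and then use as a black box for Theorems 4.3 and 4.4. So there is no ``paper proof'' to match your argument against; what you have attempted is a genuine (if abbreviated) proof of a cited classical result.

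Your outline follows the standard strategy --- reduce to the commutative order $\mathcal{O}=\mathbb{Z}[\alpha]$, lift it to a CM curve over a ring class field, and choose a prime of reduction to recover $(E,\alpha)$ --- and the ordinary case is essentially correct once one notices that the degree-preserving reduction map on endomorphism rings is an isomorphism, so the only ambiguity is complex conjugation, which you can absorb. However, both cases contain a real gap, and in the supersingular case you concede it yourself with ``Granted this.'' The step you appeal to --- that every optimal embedding $\mathcal{O}\hookrightarrow\mathrm{End}(E)$ into a maximal order of $B_{p,\infty}$ arises from reduction of some CM curve at a suitable prime --- is not a standard lemma you can invoke; it is precisely the content of Deuring's lifting theorem in the supersingular case. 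Asserting it does not prove it. The actual argument requires either a counting/mass-formula comparison (the number of $\bar{\mathbb{F}}_p$-embeddings of $\mathcal{O}$ into all supersingular endomorphism rings equals the number of CM lifts, forcing surjectivity of the reduction map on embeddings), or a deformation-theoretic argument showing that $(E,\alpha)$ deforms to characteristic $0$. Additionally, you fix $\widetilde E=\mathbb{C}/\mathcal{O}$, but to hit an arbitrary target $E$ you must allow $\widetilde E=\mathbb{C}/\mathfrak{a}$ for $\mathfrak{a}$ ranging over the proper $\mathcal{O}$-ideal classes; a single lattice gives too few CM curves. A parallel, smaller version of the same gap appears in the ordinary case, where ``the Deuring reduction--lifting dictionary lets one select $\mathfrak{p}$'' likewise presupposes the surjectivity that needs to be established. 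The skeleton is right, but the load-bearing step is assumed rather than argued.
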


In fact, $\rm{deg}\widetilde \alpha=\rm{deg} \alpha$. We lift $\alpha$ to $\widetilde \alpha $.  Since $\widetilde\alpha \in \rm{End}(\widetilde E)$ has degree 2 or 3,  $\widetilde\alpha$  corresponds to an element of some imaginary quadratic order of norm 2 or 3 respectively. However, the number of the imaginary quadratic orders containing elements of norm 2 or 3 are finite. Those with norm 2 are $\mathbb Z[ \sqrt{-1}],\mathbb Z[\sqrt{-2}]$ and $\mathbb Z[\frac{1+\sqrt{-7}}{2}]$, and those with norm 3 are $\mathbb Z[\sqrt{-2}], \mathbb Z [\frac{1+\sqrt{-3}}{2}], \mathbb Z[\sqrt{-3}]$ and $\mathbb Z [\frac{1+\sqrt{-11}}{2}]$. Since $1+\sqrt{-2}\in \rm{End}(\it{E})$ implies $\sqrt{-2}\in \rm{End}(\it{E})$, then we take out $\mathbb Z[\sqrt{-2}]$ when considering whether $\rm{Md}(\it{E})=\rm 3$. For an imaginary quadratic order $\mathbb Z[\tau]$, let $j(\tau)$ denote the $j$-invariant of elliptic curve (over $\mathbb C$) having endomorphism ring by $\mathbb Z[\tau]$. Luckily, the above $j(\tau)$ of the above orders had been given out and there are elliptic curves over $\mathbb Q$ with the j-invariants\cite{cox2011primes,silverman2013advance}. Let $\Delta_E$ be the discriminant of elliptic curve $E$.

\begin{table}[H]
\centering
\begin{tabular}[{\ell}]{|c|c|c|c|}
\hline
$\tau$&$j(\tau)$ & \makecell {minimal Weierstrass\\ equation of $E$ over $\mathbb Q$}& $\Delta_E$\\
\hline
 $\sqrt{-1}$  & $12^3=1728$ & $y^2=x^3+x$  & $2^6$\\

 \hline
 $ \sqrt{-2}$  & $20^3$ & $y^2=x^3+4x^2+2x$  & $2^9$\\

 \hline
 $\frac{1+\sqrt{-7}}{2}$  & $-15^3$ & $y^2+xy=x^3-x^2-2x-1$  & $7^3$\\

 \hline
 $\frac{1+\sqrt{-3}}{2}$  & $0$ & $y^2+y=x^3$  & $3^3$\\

 \hline
 $\sqrt{-3}$  & $2^43^35^3$ & $y^2=x^3-15x+22$  & $2^83^3$\\

 \hline
 $\frac{1+\sqrt{-11}}{2}$  & $-32^3$ & $y^2+y=x^3-x^2-7x+10$  & $11^3$\\
 \hline

\end{tabular}\\
\footnotesize{Table 1}\\
\end{table}

Note that all $j(\tau)\in \mathbb Z$, then $j(\tau)\in \mathbb F_p$ after reduction. To make things more clearly, we still give out the results of supersingular case and ordinary case respectively. Coming back to the elliptic curves over finite fields, we also need Deuring's reduction theorem.

\begin{thm}
Let $E$ be an elliptic curve over a number field $H$ with $\rm{End}(\it{E}) \cong \mathcal O$, where $\mathcal O$ is an order of an imaginary quadratic field $K$. Let $\mathfrak P$ be prime ideal of $H$ over $p$, where $E$ has non-degenerate reduction $\bar E$. Then $\bar E$ is supersingular if and only if $p$ doesn't split in $K$.
\end{thm}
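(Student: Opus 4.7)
The plan is to use the well-known fact that good (non-degenerate) reduction at $\mathfrak{P}$ induces an injective, degree-preserving ring homomorphism
\[
\mathrm{red}_{\mathfrak{P}} : \mathrm{End}(E) \hookrightarrow \mathrm{End}(\bar E).
\]
Tensoring with $\mathbb{Q}$, this embeds $K$ into the endomorphism algebra of $\bar E$. The two cases for $\bar E$ (supersingular vs.\ ordinary) then force the claimed splitting behavior of $p$ in $K$.

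For the forward direction, suppose $\bar E$ is supersingular. Then $\mathrm{End}(\bar E)\otimes\mathbb{Q}$ is the quaternion algebra $B_{p,\infty}$, ramified exactly at $p$ and $\infty$. The map above gives an embedding of the imaginary quadratic field $K$ into $B_{p,\infty}$. I will invoke the standard criterion for a quadratic field to embed into a quaternion algebra over $\mathbb{Q}$: $K$ embeds into $B_{p,\infty}$ if and only if no finite or infinite place ramified in $B_{p,\infty}$ splits in $K$. Since $K$ is imaginary quadratic, $\infty$ is nonsplit automatically, so the condition is exactly that $p$ does not split in $K$.

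For the reverse direction, I will prove the contrapositive: if $\bar E$ is ordinary, then $p$ splits in $K$. Let $q$ be the cardinality of the residue field at $\mathfrak{P}$, and let $\pi = \phi_{q,\bar E}$ be the Frobenius endomorphism, with characteristic polynomial $x^2 - tx + q$. Ordinariness means $p \nmid t$, so modulo $p$ the discriminant satisfies $t^2 - 4q \equiv t^2 \pmod{p}$, a nonzero square; hence the Kronecker symbol $\left(\tfrac{t^2-4q}{p}\right)=1$, so $p$ splits in $\mathbb{Q}(\pi)$. On the other hand, $\mathrm{End}(\bar E)\otimes\mathbb{Q}$ is an imaginary quadratic field containing $\pi$, so it equals $\mathbb{Q}(\pi)$; via the embedding $K \hookrightarrow \mathrm{End}(\bar E)\otimes\mathbb{Q}$ of two rank-$2$ $\mathbb{Q}$-algebras we get $K \cong \mathbb{Q}(\pi)$. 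Therefore $p$ splits in $K$, contradicting the hypothesis. Thus $\bar E$ must be supersingular.

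The main obstacle is the quaternion embedding criterion used in the forward direction; however, this is classical (a consequence of the local--global principle for quaternion algebras and the fact that a quadratic extension $K_v/\mathbb{Q}_v$ embeds in the local algebra $B_{p,\infty}\otimes\mathbb{Q}_v$ iff $B_{p,\infty}\otimes\mathbb{Q}_v$ is split or $K_v$ is a field), so I would simply cite it. Everything else is straightforward once the injectivity of reduction on endomorphism rings and the explicit form of the Frobenius characteristic polynomial are in hand.
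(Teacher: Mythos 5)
The paper does not give an argument for this theorem; it simply cites Lang's \emph{Elliptic Functions} (Chapter 13, Theorem 12). Your proposal, by contrast, supplies a self-contained proof, so it is a genuinely different route in the sense that the paper defers entirely to the literature. Your argument is essentially correct and standard: the injective, degree-preserving reduction map $\mathrm{End}(E)\hookrightarrow \mathrm{End}(\bar E)$ gives an embedding $K\hookrightarrow \mathrm{End}(\bar E)\otimes\mathbb{Q}$; in the supersingular case the target is $B_{p,\infty}$ and the Hasse--Minkowski embedding criterion forces $p$ to be nonsplit in $K$; in the ordinary case the target is the imaginary quadratic field $\mathbb{Q}(\pi)$, which must then coincide with $K$, and $p$ splits there because $p\nmid t$. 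One small point you slide past: from ``$t^2-4q$ is a nonzero square mod $p$'' you conclude $p$ splits in $\mathbb{Q}(\pi)$, but the splitting of $p$ is governed by the field discriminant $D_0$, where $t^2-4q=f_0^2D_0$ with $f_0$ the conductor of $\mathbb{Z}[\pi]$. Since $p\nmid(t^2-4q)$ forces $p\nmid f_0$, one indeed has $\bigl(\tfrac{D_0}{p}\bigr)=\bigl(\tfrac{t^2-4q}{p}\bigr)$, so the conclusion holds, and the same step requires the separate check $t^2-4q\equiv 1\pmod 8$ when $p=2$ (which follows from $t$ odd). These are minor gaps, routinely filled, and the paper itself uses the same Kronecker-symbol shorthand elsewhere. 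What the paper's choice buys is brevity and reliance on a canonical reference; what your version buys is transparency about exactly which classical ingredients (injectivity of reduction, quaternion embedding criterion, Frobenius characteristic polynomial) drive the dichotomy.
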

\begin{proof}
See \cite[Chapter 13, Theorem 12]{serge1987elliptic}.
\end{proof}

Hence we have

\begin{thm}
Let $E$ be a supersingular elliptic curve defined over $\overline{\mathbb F}_p$ with invariant $j$,  then $\rm{Md}(\it{E})=\rm{2}$ only when it satisfies one of the following conditions:\\
(1)\ $j=0$ when $p=2,3$;\\
(2)\ $j= 1728$ when $p\equiv 3\ (\rm{mod}\ 4)$;\\
(3)\ $j=2^65^3$ when $p\equiv 5,7\ (\rm{mod}\ 8)$;\\
(4)\ $j=-3^35^3$ when $p\equiv 3,5,6\ (\rm{mod}\ 7)$;\\
and $\rm{Md}(\it{E})=\rm{3}$ only when it satisfies one of the following conditions:\\
(5)\ $j=0$ when $p\equiv 2\ (\rm{mod}\ 3)$ and $p\ne2,5$;\\
(6)\ $j=2^43^35^3$ when $p\equiv 2\ (\rm{mod}\ 3)$ and $p\ne2,5,11,17,23$;\\
(7)\ $j=-2^{15}$ when $p\equiv 2,6,7,8 ,10\ (\rm{mod}\ 11)$ and $p\ne2,7,13,17,19$.\\
For other cases, $\rm{Md}(\it{E})=\rm4.$
\end{thm}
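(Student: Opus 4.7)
The plan is to lift the question to characteristic zero, list the imaginary quadratic orders that can host an element of reduced norm $2$ or $3$, and then translate supersingular reduction back to a congruence on $p$. Since $[2]\in\mathrm{End}(E)$ has degree $4$, we always have $\mathrm{Md}(E)\leqslant 4$, so $\mathrm{Md}(E)\in\{2,3\}$ is equivalent to $E$ possessing an endomorphism of the corresponding degree. Any such $\alpha$ lifts by Deuring's lifting theorem to an endomorphism $\widetilde\alpha$ of the same degree on a CM curve $\widetilde E$ over a number field; since $\widetilde\alpha\notin\mathbb Z$, it provides an element of reduced norm $2$ or $3$ in the imaginary quadratic order $\mathrm{End}(\widetilde E)$.

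A direct check of the norm forms $x^2+dy^2$ and $x^2+xy+\tfrac{1+d}{4}y^2$ shows that the minimal orders hosting such elements are $\mathbb Z[\sqrt{-1}]$, $\mathbb Z[\sqrt{-2}]$, and $\mathbb Z\bigl[\tfrac{1+\sqrt{-7}}{2}\bigr]$ for norm $2$, and, after discarding $\mathbb Z[\sqrt{-2}]$ (which already contributes to $\mathrm{Md}=2$), $\mathbb Z\bigl[\tfrac{1+\sqrt{-3}}{2}\bigr]$, $\mathbb Z[\sqrt{-3}]$, and $\mathbb Z\bigl[\tfrac{1+\sqrt{-11}}{2}\bigr]$ for norm $3$. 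All six orders have ring class number $1$, so $j(\mathcal O)\in\mathbb Z$ and Table~1 supplies an explicit minimal Weierstrass model over $\mathbb Q$ with discriminant $\Delta_E$. Fixing one such lift, Deuring's reduction theorem tells us that at every prime $p\nmid\Delta_E$ the reduction is supersingular iff $p$ does not split in $K=\mathrm{Frac}(\mathcal O)$; quadratic reciprocity then converts this into the congruences of cases (1)--(7), namely $p\equiv 3\pmod 4$ in $\mathbb Q(\sqrt{-1})$, $p\equiv 5,7\pmod 8$ in $\mathbb Q(\sqrt{-2})$, $p\equiv 3,5,6\pmod 7$ in $\mathbb Q(\sqrt{-7})$, $p\equiv 2\pmod 3$ in $\mathbb Q(\sqrt{-3})$, and $p\equiv 2,6,7,8,10\pmod{11}$ in $\mathbb Q(\sqrt{-11})$.

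The delicate part is the finite list of sporadic exceptions in each bullet. For every order one must throw out (a) primes $p\mid\Delta_E$ where the tabulated model has bad reduction, and (b) primes at which $j(\mathcal O)\bmod p$ coincides with a $j$-invariant of another order higher up the list, so that $E$ in fact carries a smaller-degree endomorphism and therefore belongs to an earlier case. Case (1) absorbs $p=2,3$ because $1728$, $20^3$ and $-15^3$ all reduce to $0$ in those characteristics; the exclusions $\{2,5\}$ in (5), $\{2,5,11,17,23\}$ in (6) and $\{2,7,13,17,19\}$ in (7) come out of such mod-$p$ checks---for instance $2^4 3^3 5^3\equiv 1728\pmod{11}$ and $2^4 3^3 5^3\equiv 20^3\pmod{23}$ absorb $p=11,23$ into cases (2) and (3) respectively, and a similar list of small-prime collisions of $-32^3$ with $1728,20^3,-15^3$ settles case (7). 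This collision bookkeeping is the main obstacle, but each collision reduces to one elementary congruence and the affected set of primes is a priori finite, since for large $p$ the candidate $j$-invariants are pairwise distinct. Once the cleanup is carried out for every row of Table~1, the seven listed items capture exactly the supersingular $j$-invariants with $\mathrm{Md}(E)\in\{2,3\}$, and hence any remaining supersingular $j$ satisfies $\mathrm{Md}(E)=4$.
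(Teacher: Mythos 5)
Your proposal follows exactly the same route as the paper: lift an alleged degree-$2$ or degree-$3$ endomorphism by Deuring's lifting theorem, observe that such an element forces the CM order to be one of a short list of small-discriminant orders with $h=1$, read off the $j$-invariants and models from Table~1, and use Deuring's reduction criterion (together with the Kronecker symbol) to turn ``supersingular reduction'' into the stated congruences on~$p$. You also correctly identify the two sources of sporadic exclusions (bad-reduction primes and mod-$p$ collisions among the listed $j$-invariants), and in fact spell out the collision bookkeeping more explicitly than the paper does; the paper only remarks that $E$ may simultaneously carry degree-$2$ and degree-$3$ endomorphisms. One small slip: it is not true that $1728$, $20^3$, $-15^3$ all vanish mod $2$ and mod $3$ (for instance $-15^3\equiv 1\pmod 2$ and $20^3\equiv 2\pmod 3$); what actually rules those primes out of cases (3) and (4) there is that the quadratic-reciprocity congruence itself fails, not a $j$-invariant collision. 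This does not affect the soundness of the overall argument.
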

\begin{proof}
If $\alpha\in \rm{End}(\it{E})$ has degree 2, then by Theorem 4.1, $E$ and $\alpha$ can be lifted to $\widetilde E$ over some number field $H$ and $\widetilde \alpha$ where $\rm{deg} \widetilde \alpha=2$. Then $ \widetilde \alpha$ can be $1+\sqrt{-1}, \sqrt{-2}$ or $\frac{1+\sqrt{-7}}{2}$ where $\mathbb Z[\sqrt{-1}], \mathbb Z[\sqrt{-2}]$ and $\mathbb Z[\frac{1+\sqrt{-7}}{2}]$ are both maximal, hence $\rm{End}(\widetilde E)$ is $\mathbb Z[\widetilde \alpha]$. Thus $j_{\widetilde E}$ is $1728, 2^65^3$ or $-3^35^3$ respectively. Although $\widetilde E$ may not be defined over $\mathbb Q$, there are elliptic curves defined over $\mathbb Q$ with the same $j$-invariant as $\widetilde E$. Suppose $\widetilde E$ is defined over $\mathbb Q$, just as those listed in Table 1. Then by Theorem 4.2, for proper $p$, $\widetilde E$ has non-degenerate reduction modulo $p$ and becomes supersingular after reduction with the same $j$-invariant as $E$.

It is similar for case $\rm{Md}(\it{E})=\rm{3}$. Notice that $E$ may have both endomorphisms of degree 2 and 3, at that time $\rm{Md}(\it{E})=\rm{2}$, that is why $p$ can't be some values for the case $\rm{Md}(\it{E})=\rm{3}$.
\end{proof}

\begin{remark}: For the supersingular elliptic curves defined over $\mathbb F_{p^n}$ with not all endomorphisms defined over $\overline{\mathbb F}_{p^n}$. We can get $\rm{Md}_{\mathbb F_{p^n}}(\it{E})$ easily by some computations according to the possible conditions listed by Waterhouse \cite[Theorem 4.1]{waterhouse1969abe}. We don't list them here.
\end{remark}

\begin{thm}
Let $E$ be an ordinary elliptic curve defined over $\overline{\mathbb F}_p$ with invariant $j$,  then $\rm{Md}(\it{E})=\rm{2}$ only when it satisfies one of the following conditions:\\
(1)\ $j= 1728$ when $p\equiv 1\ (\rm{mod}\ 4)$;\\
(2)\ $j=2^65^3$ when $p\equiv 1,3\ (\rm{mod}\ 8)$;\\
(3)\ $j=-3^35^3$ when $p\equiv 1,2,4\ (\rm{mod}\ 7)$;\\
and $\rm{Md}(\it{E})=\rm{3}$ only when it satisfies one of the following conditions:\\
(4)\ $j=0$ when $p\equiv 1\ (\rm{mod}\ 3)$;\\
(5)\ $j=2^43^35^3$ when $p\equiv 1\ (\rm{mod}\ 3)$\\
(6)\ $j=-2^{15}$ when $p\equiv 1,3,4,5,9\ (\rm{mod}\ 11)$.\\
For other cases, $\rm{Md}(\it{E})=\rm4.$
\end{thm}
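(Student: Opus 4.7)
The plan is to mirror the proof of Theorem 4.3, using Deuring's lifting theorem (Theorem 4.1) to reduce to the characteristic-zero classification of imaginary quadratic orders containing elements of small norm, and then invoking Deuring's reduction theorem (Theorem 4.2) in the \emph{split} direction because $E$ is now ordinary. Given an endomorphism $\alpha$ of $E$ of degree $2$ or $3$, I would lift $(E,\alpha)$ to $(\widetilde{E},\widetilde{\alpha})$ over a number field $H$ with $\deg\widetilde{\alpha}=\deg\alpha$, so that $\widetilde{\alpha}$ lies in an imaginary quadratic order containing an element of norm $2$ or $3$.

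As enumerated in the proof of Theorem 4.3, the admissible orders with an element of norm $2$ are $\mathbb{Z}[\sqrt{-1}]$, $\mathbb{Z}[\sqrt{-2}]$, $\mathbb{Z}[\tfrac{1+\sqrt{-7}}{2}]$, yielding via Table~1 the $j$-invariants $1728$, $2^65^3$, $-3^35^3$; the admissible orders with an element of norm $3$, after excluding $\mathbb{Z}[\sqrt{-2}]$ (already captured in the $\mathrm{Md}=2$ list), are $\mathbb{Z}[\tfrac{1+\sqrt{-3}}{2}]$, $\mathbb{Z}[\sqrt{-3}]$, $\mathbb{Z}[\tfrac{1+\sqrt{-11}}{2}]$, yielding $0$, $2^43^35^3$, $-2^{15}$. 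Next I would invoke Theorem 4.2 to force $p$ to split in the corresponding imaginary quadratic field $K$, and compute the splitting conditions via the Kronecker symbol and quadratic reciprocity: $p$ splits in $\mathbb{Q}(\sqrt{-1})$ iff $p\equiv 1\pmod{4}$; in $\mathbb{Q}(\sqrt{-2})$ iff $p\equiv 1,3\pmod{8}$; in $\mathbb{Q}(\sqrt{-7})$ iff $p\equiv 1,2,4\pmod{7}$; in $\mathbb{Q}(\sqrt{-3})$ iff $p\equiv 1\pmod{3}$; and in $\mathbb{Q}(\sqrt{-11})$ iff $p$ is a nonzero quadratic residue modulo $11$, i.e., $p\equiv 1,3,4,5,9\pmod{11}$. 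These are precisely the congruence conditions (1)--(6) of the theorem.

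For sufficiency, given such a $p$ and the matching $\mathbb{Q}$-model from Table~1, I would note that each tabulated discriminant is a prime power not divisible by the eligible $p$, so the model has good reduction modulo $p$; Theorem 4.2 applied in reverse confirms that the reduction is ordinary and inherits the degree-$2$ or degree-$3$ endomorphism from $\widetilde{\alpha}$. For any ordinary $E$ whose $j$-invariant is not among the six listed values, the endomorphism order contains no element of norm $2$ or $3$, so the minimal nontrivial degree is $\deg[2]=4$. The main obstacle is the bookkeeping: unlike in the supersingular case, I must verify that no small primes need to be excluded, which holds because each ordinary endomorphism ring is a single imaginary quadratic order and the only such order containing both a norm-$2$ and a norm-$3$ element is $\mathbb{Z}[\sqrt{-2}]$, already absorbed into the $\mathrm{Md}=2$ case.
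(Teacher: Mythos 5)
Your proof is correct and follows exactly the route the paper intends: the paper's own proof of this theorem is the single line ``Similar to the proof of the supersingular case,'' and you have carried out that mirroring faithfully — Deuring lifting, classification of imaginary quadratic orders with elements of norm $2$ or $3$, Deuring's reduction theorem applied in the split direction, and the Kronecker-symbol/quadratic-reciprocity computation of the splitting congruences. The closing observation that the ordinary endomorphism ring is a single quadratic order (so no prime exclusions are needed beyond absorbing $\mathbb{Z}[\sqrt{-2}]$ into the $\rm{Md}=2$ list) is precisely the reason the ordinary statement is cleaner than Theorem~4.3, and you have identified it correctly.
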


\begin{proof}
Similar to the proof of the supersingular case.
\end{proof}

Given two ordinary elliptic curves $E_{\rm 1},E_{\rm 2}$ defined over $k$ where $E_{\rm 1}\ncong E_{\rm 2}$ and $\rm{Hom}(\it{E}_{\rm{2}},\it{E}_{\rm{1}})\ne \rm0$. If $[\rm{End}(\it{E}_{\rm{2}}):\rm{End}(\it{E}_{\rm{1}})]=\ell^{e'}$ where $e'>0$, and suppose $E_{\rm 1}$ and $E_{\rm 2}$ are in the same $\ell$-volcano. If $E_{\rm 2}$ is above $E_{\rm 1}$ directly, then by Lemma 3.1, $\rm{Md}(\it{E}_{\rm{2}},\it{E}_{\rm{1}})=\ell^{e'}$. Otherwise, $\ell^{e'}\mid \rm{Md}(\it{E}_{\rm{2}},\it{E}_{\rm{1}})$ but they are not equal. Let $E_3$ be the elliptic curve directly above $E_{\rm 1}$ and $E_{\rm 2}$ with largest level. Suppose  $[\rm{End}(\it{E}_{\rm 3}):\rm{End}(\it{E}_{\rm{2}})]=\ell^{e''}$ where $e''>0$. The path from $E_{\rm 2}$ ascending to $E_3$ and then descending to $E_{\rm 1}$ corresponds to the isogeny of $\ell$-power degree from $E_{\rm 2}$ to $E_{\rm 1}$ without backtracking or endomorphism cycles and it is the shortest path from $E_2$ to $E_1$ in the $l$-volcano. Let $E'_{\rm 2}$ be the elliptic curve directly above $E_{\rm 1}$, if $\rm{Md}(\it{E}_{\rm{2}},\it{E}'_{\rm{2}})=\ell^{\rm2\it e''}$, then $\rm{Md}(\it{E}_{\rm{2}},\it{E}_{\rm{1}})=\ell^{e'+\rm2\it e''}$. But it does not always hold.

\begin{exam}
  Let $k=\mathbb{F}_{41}$, $t=6$, and $\ell=2$. For simplicity, the $k$-isomorphism classes of elliptic curves with trace $t$ are represented by their $j$-invariants in the following. Because the number of the $k$-isomorphism classes of elliptic curves with the same $j$-invariant is $2$ for the $j$-invariants not equal to 0 or $1728$ \cite[Theorem 2.2]{broker2006constructing} and the two $k$-isomorphism classes are twist of each other with their traces being opposite numbers. Here is the volcano we are considering.

\begin{table}[H]
\centering
\begin{tikzpicture}[> = stealth, 
	shorten > = 1pt, 
	auto,
	node distance = 3cm, 
	semithick 
	,scale=.8,auto=left]
  \node (n1) at (0,0) {5};
  \node (n2) at (-2,-2) {29};
  \node (n3) at (2,-2) {22};
  \node (n4) at (-3,-4) {13};
  \node (n5) at (-1,-4) {33};
  \node (n6) at (1,-4) {25};
  \node (n7) at (3,-4) {35};
  \draw (n1) arc(-88:260:0.5);
  \draw (n1)--(n2);
  \draw (n1)--(n3);
   \draw (n2)--(n4);
    \draw (n2)--(n5);
     \draw (n3)--(n6);
      \draw (n3)--(n7);
\end{tikzpicture}
\end{table}
 Let $j(E_{\rm 2})=29$, $j(E_{\rm 1})=25$ and $j(E'_{\rm 2})=22$, then $[\rm{End}(\it{E}_{\rm{2}}):\rm{End}(\it{E}_{\rm{1}})]$ $=2$ and $\rm{Md}(\it{E}'_{\rm{2}},\it{E}_{\rm1})$ $=2 $. Since it can be checked by the modular equation \cite[Chapter 5, Theorem 5]{serge1987elliptic} of order $3$ that there is an isogeny from $E_{\rm 2}$ to $E'_{\rm 3}$ of degree $3$, then $\rm{Md}(\it{E}_{\rm{2}},\it{E}'_{\rm{2}})$ $=3\ne 2^2$. Hence  $\rm{Md}(\it{E}_{\rm{2}},\it{E}_{\rm1})$ $=6 $.
\end{exam}

More generally, if $[\rm{End}(\it{E}_{\rm{2}}): \rm{End}(\it{E}_{\rm{1}})]=$ $\ell^{e'}$ where $e'>0$, then there is $E'_2$ directly above $E_1$ such that $\rm{End}(\it{E}_{\rm{2}})\cong\rm{End}(\it{E}'_{\rm{2}})$. In this case,  $\rm{Md}(\it{E}_{\rm{2}},\it{E}_{\rm{1}})=\rm{Md}(\it{E}_{\rm{2}},\it{E}'_{\rm{2}})\ell^{e'}$ still holds. Thus this problem can be reduced to the case $\rm{End}(\it{E}_{\rm{2}})\cong\rm{End}(\it{E}'_{\rm{2}})$.
To get an upper bound for $\rm{Md}(\it{E}_{\rm{2}},\it{E}'_{\rm{2}})$, we need the following lemma.
\begin{lem}
  Let $\mathcal{O}$ be an imaginary quadratic order, then every invertible ideal class of $\mathcal{O}$ contains an ideal $J$ with
  \begin{equation*}
    \lVert J \rVert \leqslant \frac{2}{\pi} \sqrt{\vert \rm{disc}(\mathcal{O}) \vert}.
  \end{equation*}
\end{lem}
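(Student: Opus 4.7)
The plan is to apply Minkowski's theorem from the geometry of numbers, embedding $\mathcal{O}$ as a lattice in $\mathbb{C}\cong\mathbb{R}^2$ and controlling norms by volumes of disks. Start with an invertible ideal class $\mathcal{C}$ and pick any invertible integral ideal $\mathfrak{a}$ in the inverse class $\mathcal{C}^{-1}$. The goal is to produce a nonzero $\alpha\in\mathfrak{a}$ of small complex absolute value; then $(\alpha)=\mathfrak{a}\cdot J$ for an integral invertible ideal $J$, and $J$ lies in $\mathcal{C}$ with $\lVert J\rVert = \mathrm{N}(\alpha)/\lVert\mathfrak{a}\rVert$.

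First I would set up the lattice computation. Writing $\mathcal{O}=\mathbb{Z}+\mathbb{Z}\,f\gamma$ as in Section 2.4, one checks that under the standard embedding $\mathcal{O}\hookrightarrow\mathbb{C}$ the fundamental covolume equals $\tfrac{1}{2}\sqrt{|\mathrm{disc}(\mathcal{O})|}$ (the imaginary part of $f\gamma$ is $\tfrac{f}{2}\sqrt{|D_0|}$, and $\mathrm{disc}(\mathcal{O})=f^2 D_0$). Consequently the lattice $\mathfrak{a}\subset\mathbb{C}$ has covolume $\lVert\mathfrak{a}\rVert\cdot\tfrac{1}{2}\sqrt{|\mathrm{disc}(\mathcal{O})|}$.

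Next I would apply Minkowski's theorem to the closed disk $K_r=\{z\in\mathbb{C}:|z|\le r\}$, which is convex, symmetric, of area $\pi r^2$. Choose $r$ so that $\pi r^2=4\cdot\mathrm{covol}(\mathfrak{a})=2\lVert\mathfrak{a}\rVert\sqrt{|\mathrm{disc}(\mathcal{O})|}$; Minkowski then yields a nonzero $\alpha\in\mathfrak{a}$ with
\begin{equation*}
\mathrm{N}(\alpha)=|\alpha|^2\le r^2=\frac{2\lVert\mathfrak{a}\rVert\sqrt{|\mathrm{disc}(\mathcal{O})|}}{\pi}.
\end{equation*}
Since $\mathfrak{a}$ is invertible, the fractional ideal $J:=(\alpha)\mathfrak{a}^{-1}$ is integral (as $(\alpha)\subseteq\mathfrak{a}$) and invertible, of norm $\mathrm{N}(\alpha)/\lVert\mathfrak{a}\rVert\le\tfrac{2}{\pi}\sqrt{|\mathrm{disc}(\mathcal{O})|}$, and by construction $[J]=[\mathfrak{a}^{-1}]=\mathcal{C}$.

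The main obstacle is really the bookkeeping for a \emph{non-maximal} order: one must use that the input ideal is invertible (equivalently proper) so that $J=(\alpha)\mathfrak{a}^{-1}$ is a genuine integral ideal of $\mathcal{O}$ and the multiplicativity of the norm is valid. The hypothesis that $\mathfrak{a}$ represents an invertible class is used precisely here; without it the same Minkowski step still produces $\alpha$, but $(\alpha)\mathfrak{a}^{-1}$ need not be an ideal of $\mathcal{O}$.
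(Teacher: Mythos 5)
Your proposal is correct and follows the same approach the paper invokes by citation, namely the standard Minkowski-bound argument (embed the order as a lattice in $\mathbb{C}$, compute the covolume $\tfrac{1}{2}\sqrt{|\mathrm{disc}(\mathcal{O})|}$, apply Minkowski's convex-body theorem to a disk, and pass from a short vector in a representative of the inverse class to a small-norm ideal in the given class); the paper simply refers to Marcus and Stevenhagen for this argument rather than writing it out. Your remark about where invertibility (properness) is needed — to guarantee $(\alpha)\mathfrak{a}^{-1}$ is integral and that the ideal norm is multiplicative in the non-maximal order — correctly identifies the only point where the orders case genuinely differs from the ring-of-integers case.
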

\begin{proof}
  See \cite[Chapter 5]{marcus1977number}. The proof is given there for the algebraic integer rings of number fields with the aid of Minkowski's theorem. It is easy to check that the proof still holds for imaginary quadratic orders. In fact, a proof for general orders of number rings is given by \cite[Theorem 5.4]{Stevenhagen2017number}.
\end{proof}

\begin{thm}
  Given two ordinary elliptic curves $E_1,E_2$ over $\mathbb{F}_{q}$ with trace $t$ and they are not $\mathbb{F}_{q}$-isomorphic. Then \begin{equation*}
         \rm{Md}(\it{E}_{\rm{2}},\it{E}_{\rm{1}})  \leqslant  \frac{\rm 2}{\pi}(\rm{4}\it q-t^{\rm 2})^{\frac{\rm1}{\rm2}}.
       \end{equation*}
\end{thm}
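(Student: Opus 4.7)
The plan is to combine the invertible-ideal description of isogenies (Proposition 2.4) with the Minkowski-type bound of Lemma 4.1, after reducing to the case $\mathrm{End}(E_1)\cong\mathrm{End}(E_2)$. For the reduction I would invoke the volcano discussion preceding the theorem: each prime $\ell$ for which $[\mathrm{End}(E_2):\mathrm{End}(E_1)]$ has non-trivial $\ell$-part forces an ascending/descending segment in the $\ell$-volcano, and so I can replace $E_1$ by a neighbour $E_1'$ with $\mathrm{End}(E_1')\cong\mathrm{End}(E_2)$, paying a multiplicative factor $\ell^{|e_i'|}$ in the degree, as already formalised (for a single prime) in the paragraph just before the theorem.

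With $\mathcal{O}:=\mathrm{End}(E_2)$ and $\mathrm{End}(E_1)\cong\mathcal{O}$, any isogeny $\beta\colon E_2\to E_1$ makes $I:=\mathrm{Hom}(E_1,E_2)\beta$ an invertible ideal of $\mathcal{O}$ with norm $\deg\beta$ by Proposition 2.4. The class group $\mathrm{Cl}(\mathcal{O})$ acts freely and transitively on the $\mathbb{F}_q$-isomorphism classes of curves in the isogeny class with endomorphism ring $\mathcal{O}$, so the ideal class $[I]$ is non-trivial precisely because $E_1\not\cong_{\mathbb{F}_q} E_2$. Apply Lemma 4.1 to produce $J\in[I]$ with $\mathrm{Nr}(J)\leq\tfrac{2}{\pi}\sqrt{|\mathrm{disc}(\mathcal{O})|}$; then $J$ corresponds to an isogeny $\beta'\colon E_2\to E_1'$ with $E_1'\cong_{\mathbb{F}_q} E_1$ and $\deg\beta'=\mathrm{Nr}(J)$. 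This $\beta'$ is non-trivial because the only integral invertible ideal of norm $1$ is the principal ideal $\mathcal{O}$, and $[J]=[I]$ is non-principal.

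To convert the bound to the stated one, use the chain $\mathbb{Z}[\pi_{q,E_2}]\subseteq\mathcal{O}$: if their conductors are $f_0$ and $f$ respectively, then $f\mid f_0$, so
\[
|\mathrm{disc}(\mathcal{O})|=f^2|D_0|\leq f_0^2|D_0|=|t^2-4q|=4q-t^2
\]
by Hasse's theorem, and the bound $\mathrm{Md}(E_2,E_1)\leq\deg\beta'\leq\tfrac{2}{\pi}\sqrt{4q-t^2}$ follows. The main obstacle will be the clean execution of the reduction: after replacing $(E_2,E_1)$ by $(E_2,E_1')$ with matching endomorphism rings one picks up an extra factor $\prod_i \ell_i^{|e_i'|}$, and I must check that this factor is compensated by the enlargement of the conductor from that of $\mathcal{O}$ to that of $\mathbb{Z}[\pi_{q,E_2}]$. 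This is elementary bookkeeping: in the reduced case the conductor of $\mathcal{O}$ is smaller by exactly $\prod_i\ell_i^{|e_i'|}$, so its square absorbs the extra factor inside the square root, and the single inequality $\tfrac{2}{\pi}\sqrt{4q-t^2}$ survives intact.
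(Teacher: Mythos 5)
Your middle paragraph (the class-group bijection on curves with fixed endomorphism ring, Lemma~4.1, non-triviality of the ideal class) is exactly the paper's core argument, but the reduction step is flawed and the ``elementary bookkeeping'' at the end is false. You replace $E_1$ by $E_1'$ with $\mathrm{End}(E_1')\cong\mathrm{End}(E_2)=:\mathcal{O}$ at cost $\prod_i\ell_i^{|e_i'|}$ (where $[\mathrm{End}(E_2):\mathrm{End}(E_1)]=\prod_i\ell_i^{e_i'}$), then claim $f_0/\mathrm{cond}(\mathcal{O})=\prod_i\ell_i^{|e_i'|}$, which would let the square root absorb the cost. That identity is not true: $\mathrm{cond}(\mathcal{O})$ is determined by $E_2$ alone, not by the relative position of $E_1$ and $E_2$. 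For a concrete failure, take $\mathrm{End}(E_1)=\mathcal{O}_K$ maximal and $\mathrm{End}(E_2)=\mathbb{Z}[\pi]$ of conductor $f_0=\ell$. Moving $E_1$ down to a curve with conductor $\ell$ costs a degree-$\ell$ descent, and Lemma~4.1 applied to $\mathbb{Z}[\pi]$ gives $\tfrac{2}{\pi}\ell\sqrt{|D_0|}$, so your bound reads $\tfrac{2}{\pi}\ell^2\sqrt{|D_0|}$ while the target is $\tfrac{2}{\pi}\sqrt{4q-t^2}=\tfrac{2}{\pi}\ell\sqrt{|D_0|}$ --- off by a full factor of $\ell$. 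More generally, whenever $\mathrm{End}(E_1)\not\subseteq\mathrm{End}(E_2)$ the reduction forces a descent, the correction factor is $\prod_i\ell_i^{|e_i'-e_i''|}$ (in the paper's notation for the conductors $f_1=\prod\ell_i^{e_i'}$, $f_2=\prod\ell_i^{e_i''}$), and $f_2\cdot\prod_i\ell_i^{|e_i'-e_i''|}$ can exceed $f_0$.

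The fix, which is what the paper actually does, is to move \emph{both} curves upward: pick $E_1'$ above $E_1$ and $E_2'$ above $E_2$ whose common endomorphism ring has conductor $\gcd(f_1,f_2)$ (the smallest order containing both $\mathrm{End}(E_1)$ and $\mathrm{End}(E_2)$), apply Lemma~4.1 there, and note that
\begin{equation*}
\mathrm{Md}(E_1,E_1')\,\mathrm{Md}(E_1',E_2')\,\mathrm{Md}(E_2,E_2')
\;\leqslant\;\frac{2}{\pi}\sqrt{|D_0|}\,\gcd(f_1,f_2)\cdot\frac{f_1}{\gcd(f_1,f_2)}\cdot\frac{f_2}{\gcd(f_1,f_2)}
\;=\;\frac{2}{\pi}\sqrt{|D_0|}\,\mathrm{lcm}(f_1,f_2)\;\leqslant\;\frac{2}{\pi}\sqrt{|D_0|}\,f_0.
\end{equation*}
Reducing to $\mathrm{End}(E_2)$ (or to $\mathrm{End}(E_1)$) fails in general because in the multi-prime case neither conductor need divide the other, so either choice can force descents at some primes; only the $\gcd$-conductor order sits above both curves and makes all correction steps ascending. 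Once you change the reduction target accordingly, your argument coincides with the paper's.
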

\begin{proof}
Suppose $\mathbb Z[\pi_{q,E_2}]=\mathbb Z +\mathbb Z\mathit f_0 \gamma$ with conductor $\mathit f_0=\prod\limits_{i=1}^{r}\ell_i^{e_i}$. And suppose $\rm{End}(\it E_{\rm1})$ $(\rm{End}(\it E_{\rm2}))$ has conductor $\mathit f_1=\prod\limits_{i=1}^{r}\ell_i^{e'_i}$ ($\mathit f_2=\prod\limits_{i=1}^{r}\ell_i^{e_i^{''}}$) where $0\leqslant e'_i (e_i^{''}) \leqslant e_i$ for all $i$. Then there are $E'_1, E'_2$ whose endomorphism rings have conductor $f=\prod\limits_{i=1}^{r}\ell_i^{\rm{min}(\it e'_i,e_i^{''})}$ such that $\rm{Md}(\it{E}_{\rm 1},\it{E}'_{\rm 1})=\prod\limits_{i=1}^{r}\ell_i^{e'
_i-\rm{min}(\it e'_i,e_i^{''}) }$ and $\rm{Md}(\it{E}_{\rm 2},\it{E}'_{\rm 2})=\prod\limits_{i=1}^{r}\ell_i^{e^{''}_i-\rm{min}(\it e'_i,e^{''}_i) }$.
Hence
\begin{equation*}
\begin{split}
    \rm{Md}(\it{E}_{\rm 1},\it{E}_{\rm 2}) & \leqslant \rm{Md}(\it{E}'_{\rm 1},\it{E}'_{\rm 2})\rm{Md}(\it{E}_{\rm 1},\it{E}'_{\rm 1})\rm{Md}(\it{E}_{\rm 2},\it{E}'_{\rm 2}) \\
     & \leqslant \frac{2}{\pi} \sqrt{\frac{\rm 4 \it q-t^{\rm 2}}{\prod\limits_{i=1}^{r}\ell_i^{2(e_i-\rm{min}(\it e'_i,e^{''}_i))}}}(\prod\limits_{i=1}^{r}\ell_i^{e'
_i-\rm{min}(\it e'_i,e_i{''}) })( \prod\limits_{i=1}^{r}\ell_i^{e^{''}_i-\rm{min}(\it e'_i,e^{''}_i) }) \   \rm (by\ Lemma 4.1)\\
     & = \frac{2}{\pi}\sqrt{(\rm 4 \it q-t^{\rm 2})\prod\limits_{i=1}^{r}\ell_i^{\rm{2}(\it e'_i+e^{''}_i-e_i-\rm{min}(\it e'_i,e^{''}_i))}}\\
     & \leqslant \frac{\rm 2}{\pi}(\rm{4}\it q-t^{\rm 2})^{\frac{\rm1}{\rm2}}.
\end{split}
\end{equation*}
\end{proof}
The result in Theorem 4.5 is rough. For two elliptic curves with known endomorphism rings, the result can be more precise.
\begin{exam}
 Let eB$(k,t)$ denote the upper bound given by Theorem 4.5, and rB$(k,t)$ denote the  largest non-trivial minimal degree in the $k$-isogenous class with trace $t$.

  As in Example 1, let $k=\mathbb{F}_{41}$, $t=6$, we have eB$(\mathbb{F}_{41},6)=7$. The $3$-isogeny garph is
\begin{table}[H]
\centering
\begin{tikzpicture}[> = stealth, 
	shorten > =0.5pt, 
	auto,
	node distance = 3cm, 
	semithick 
	,scale=.8,auto=left]
  \node (n1) at (0,-1) {5};
  \node (n2) at (2,0) {29};
  \node (n3) at (4,0) {22};
  \node (n4) at (6,0) {13};
  \node (n5) at (8,-2) {33};
  \node (n6) at (8,0) {25};
  \node (n7) at (6,-2) {35};
  \draw (n1) arc(-88:260:0.5);
  \draw (n1) arc(-88:260:0.25);
  \draw (2.5,0)-- (3.5,0) [yshift=2pt]  (2.5,0) --  (3.5,0);
  \draw (n4)--(n6);
   \draw (n4)--(n7);
     \draw (n5)--(n6);
      \draw (n5)--(n7);
\end{tikzpicture}
\end{table}
Then rB$(\mathbb{F}_{41},6)=6$.

  Similarly, let $k=\mathbb{F}_{53}$, $t=-4$, we have  eB$(\mathbb{F}_{53},-4)=8$ and rB$(\mathbb{F}_{53},-4)=7$. Let $k=\mathbb{F}_{67}$, $t=12$, we have  eB$(\mathbb{F}_{67},12)=7$ and rB$(\mathbb{F}_{67},12)=5$.

\end{exam}

\begin{thm}
  Given two supersingular elliptic curves $E_{\rm 1},$ $E_{\rm 2}$ over $\mathbb{F}_{p^2}$. Then $\rm{Md}(\it{E}_{\rm{2}},\it{E}_{\rm{1}}) $ $\sim O(p)$ and $\rm{Md}_{\mathbb{F}_{\it p^{\rm 2}}}(\it{E}_{\rm{2}},\it{E}_{\rm{1}}) $ $\sim O(p)$.
   If $E_{\rm 1},$ $E_{\rm 2}$ can be defined over $\mathbb{F}_{p}$ and they are not $\mathbb{F}_{p}$-isomorphic, then $\rm{Md}_{\mathbb{F}_{\it p}}(\it{E}_{\rm{2}},\it{E}_{\rm{1}}) $ $\leqslant  \frac{4}{\pi}p^{\frac{1}{2}}$.
\end{thm}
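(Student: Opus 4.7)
The plan is to handle each of the three assertions by a Minkowski-style argument on the appropriate order, converting a short representative in a left ideal class into a low-degree isogeny.

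For $\mathrm{Md}(E_2,E_1)=O(p)$, I would fix any nonconstant isogeny $\beta\colon E_2\to E_1$ over $\bar k$ and consider the left ideal $I=\mathrm{Hom}(E_1,E_2)\beta$ of the maximal order $\mathcal O=\mathrm{End}(E_2)\subset B_{p,\infty}$; by Theorem 3.1, $\mathrm{Nrd}(I)=\deg\beta$ and the class of $I$ depends only on $E_1$ (changing $\beta$ multiplies $I$ on the right by a principal factor). Applying Minkowski's theorem to $\mathcal O$ as a rank-$4$ lattice whose positive definite reduced-norm form has discriminant of order $p^2$, every left ideal class of $\mathcal O$ contains an integral representative $J$ with $\mathrm{Nrd}(J)=O(\sqrt p)\subset O(p)$. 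Remark 3.1 then converts $J$ into an isogeny $\alpha\colon E_2\to E_1$ of degree $\mathrm{Nrd}(J)=O(p)$. The $\mathbb F_{p^2}$-analogue follows by the same quaternionic argument whenever all endomorphisms of $E_2$ are $\mathbb F_{p^2}$-rational (trace $\pm 2p$), for then every subgroup of $E_2$ is stable under Frobenius $=[\pm p]$ and hence $\mathrm{Md}_{\mathbb F_{p^2}}=\mathrm{Md}$; in the remaining cases $\mathrm{End}_{\mathbb F_{p^2}}(E_2)$ is an imaginary quadratic order and one invokes the argument of the next paragraph, which already gives the stronger bound $O(\sqrt p)$.

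For the $\mathbb F_p$-bound, supersingularity over $\mathbb F_p$ forces $\pi_{p,E_2}^2=-p$, so $\mathcal O:=\mathrm{End}_{\mathbb F_p}(E_2)$ is an order in $K=\mathbb Q(\sqrt{-p})$ with discriminant $D$ satisfying $|D|\le 4p$. I would pick any $\mathbb F_p$-isogeny $\beta\colon E_2\to E_1$ and, via the identification of Section 2, view $I=\mathrm{Hom}_{\mathbb F_p}(E_1,E_2)\beta$ as a left ideal of $\mathcal O$ with $\|I\|=\deg\beta$. Lemma 4.1 applied to the inverse class $[I]^{-1}$ produces an integral representative $J$ with $\|J\|\le\frac{2}{\pi}\sqrt{|D|}\le\frac{4}{\pi}\sqrt p$, and $IJ=(\gamma)$ is principal with $\gamma\in I$. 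Unwrapping the identification writes $\gamma=\alpha\beta$ for some $\alpha\in\mathrm{Hom}_{\mathbb F_p}(E_1,E_2)$; comparing reduced norms gives $\deg\alpha\cdot\deg\beta=\|I\|\cdot\|J\|=\deg\beta\cdot\|J\|$, whence $\deg\alpha=\|J\|\le\frac{4}{\pi}\sqrt p$. Since $E_1\not\cong_{\mathbb F_p}E_2$, no $\mathbb F_p$-isogeny of degree $1$ connects them, so $\deg\alpha\ge 2$ and the bound $\mathrm{Md}_{\mathbb F_p}(E_2,E_1)\le\frac{4}{\pi}\sqrt p$ follows.

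The main obstacle is the invertibility hypothesis of Lemma 4.1: when $p\equiv 3\pmod 4$ and $\mathcal O=\mathbb Z[\sqrt{-p}]$ is the non-maximal order of discriminant $-4p$, the ideal $I$ may fail to be invertible. I would circumvent this by factoring $I=tI_0$ with $I_0$ primitive in the sense of Section 2.4, absorbing $t\in\mathbb Z$ into a rescaled isogeny, and applying the Minkowski bound on reduced binary quadratic forms directly to $I_0$: this yields a class representative of norm $O(\sqrt{|D|})$ irrespective of invertibility, and $|D|\le 4p$ still gives the constant $\tfrac{4}{\pi}$. A secondary point to verify is that the small-norm generator $\gamma$ indeed corresponds to an isogeny defined over $\mathbb F_p$; this is automatic because the Section 2 identification places $\mathcal O$ inside $\mathrm{End}_{\mathbb F_p}(E_2)$, so every element of $I\subseteq\mathcal O$ is realised by an element of $\mathrm{Hom}_{\mathbb F_p}(E_1,E_2)\beta$.
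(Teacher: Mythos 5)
The paper's own proof is terse: it cites Kohel's Theorem 79 for $\mathrm{Md}(E_2,E_1)\sim O(p)$, handles $\mathrm{Md}_{\mathbb F_{p^2}}$ by splitting on the trace (trace $\pm 2p$ gives $\mathrm{Md}_{\mathbb F_{p^2}}=\mathrm{Md}$, otherwise it cites \cite{adj2019iso} for the exact value $p$), and handles $\mathrm{Md}_{\mathbb F_p}$ by Lemma 4.1. Your argument for the first assertion is a genuine alternative: instead of citing Kohel, you run a Minkowski/Hermite bound directly on the left ideal $\mathrm{Hom}(E_1,E_2)\beta$ of the maximal order, whose reduced-norm form has determinant $\asymp p^2$, yielding a short vector of reduced norm $O(N\sqrt p)$ and hence an isogeny of degree $O(\sqrt p)\subseteq O(p)$. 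This is tidy, self-contained, and in fact stronger than what the theorem states.

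However, there is a concrete error in your treatment of $\mathrm{Md}_{\mathbb F_{p^2}}$ when the trace $t\neq\pm 2p$. You invoke ``the argument of the next paragraph'' and claim it ``already gives the stronger bound $O(\sqrt p)$,'' but that paragraph's bound $O(\sqrt p)$ rests on $\pi_{p,E}^2=-p$, i.e.\ $|\mathrm{disc}(\mathbb Z[\pi_{p,E}])|\leqslant 4p$. Over $\mathbb F_{p^2}$ with $t\in\{0,\pm p\}$ the relevant equation is $\pi_{p^2,E}^2-t\pi_{p^2,E}+p^2=0$, so the discriminant is $t^2-4p^2\in\{-4p^2,-3p^2\}$, and Lemma 4.1 only gives $\frac 2\pi\sqrt{|D|}\asymp p$, not $O(\sqrt p)$. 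Indeed the paper records (via \cite{adj2019iso}) that in this case $\mathrm{Md}_{\mathbb F_{p^2}}(E_2,E_1)$ is \emph{exactly} $p$, which flatly contradicts an $O(\sqrt p)$ bound. The final conclusion $O(p)$ is still obtainable once the discriminant is computed correctly, but the intermediate claim of $O(\sqrt p)$ is false. There is also an implicit invertibility issue here (the two $\mathbb F_{p^2}$-endomorphism rings may be unequal imaginary quadratic orders) that you do not address.

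On the $\mathbb F_p$-bound your approach coincides with the paper's, but your proposed fix for non-invertibility --- factoring $I=tI_0$ with $I_0$ primitive and applying Minkowski ``directly to $I_0$'' --- does not quite close the gap: a primitive ideal need not be proper (invertible), so it need not lie in the ideal class group, and $\|I_0\|$ need not equal $[\mathcal O:I_0]$ (cf.\ Theorem 3.2, where the index can exceed the degree). The cleaner repair, which is implicit in the paper and explicit in the proof of Theorem 4.5, is to route through an intermediate pair $E_1',E_2'$ with $\mathrm{End}_{\mathbb F_p}(E_1')\cong\mathrm{End}_{\mathbb F_p}(E_2')$, apply Lemma 4.1 there where invertibility is guaranteed, and absorb the ascending/descending factors; since $4q-t^2=4p$ for supersingular curves over $\mathbb F_p$ (with $t=0$, $p>3$), the resulting bound is $\frac 2\pi\sqrt{4p}=\frac 4\pi\sqrt p$ as claimed.
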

\begin{proof}
  As a result of \cite[Theorem 79]{kohel1996endomorphism}, $\rm{Md}(\it{E}_{\rm{2}},\it{E}_{\rm{1}}) $ $\sim O(p)$. If the trace is $\pm2p$, then $\rm{Md}_{\mathbb{F}_{\it p^{\rm 2}}}(\it{E}_{\rm{2}},\it{E}_{\rm{1}})$ $=\rm{Md}(\it{E}_{\rm{2}},\it{E}_{\rm{1}})$. If the trace is not $\pm2p$ and they are not $\mathbb{F}_{p^2}$-isomorphic, then $\rm{Md}_{\mathbb{F}_{\it p^{\rm 2}}}(\it{E}_{\rm{2}},\it{E}_{\rm{1}})=p$\cite{adj2019iso}. If $E_{\rm 1},$ $E_{\rm 2}$ can be defined over $\mathbb{F}_{p}$, by Lemma 4.1, we get $\rm{Md}_{\mathbb{F}_{\it p}}(\it{E}_{\rm{2}},\it{E}_{\rm{1}}) $ $\leqslant  \frac{4}{\pi}p^{\frac{1}{2}}$.
\end{proof}
\begin{exam}
Let $p=53$, then eB$(\mathbb{F}_{53},0)=9$. There is only one order $\mathcal O$ with discriminant $-2^253$ and it has ring class number 6. There are totally six $\mathbb{F}_{53}$-isomorphism classes and one $j$-invariant correspond to two $\mathbb{F}_{53}$-isomorphism classes. Denote them by $E(0,1),E(0,2),E(-3,1),E(-3,2)$, $E(-7,1),$ $E(-7,2)$. The $2$-isogeny graph is
\begin{table}[H]
\centering
\begin{tikzpicture}[> = stealth, 
	shorten > = 1pt, 
	auto,
	node distance = 3cm, 
	semithick 
	,scale=.8,auto=left]
  \node (n1) at (0,0) {E(0,1)};
  \node (n2) at (2,0) {E(-7,1)};
  \node (n3) at (4,0) {E(-3,1)};
  \node (n4) at (6,0) {E(-3,2)};
  \node (n5) at (8,0) {E(0,2)};
  \node (n6) at (10,0) {E(-7,2)};

  \draw (n1)--(n2);
   \draw (n3)--(n4);
     \draw (n5)--(n6);
\end{tikzpicture}
\end{table}
And the $3$-isogeny graph is
\begin{table}[H]
\centering
\begin{tikzpicture}[> = stealth, 
	shorten > = 1pt, 
	auto,
	node distance = 3cm, 
	semithick 
	,scale=.8,auto=left]
  \node (n1) at (0,0) {E(0,1)};
  \node (n2) at (2,0) {E(0,2)};
  \node (n3) at (4,0) {E(-3,2)};
  \node (n4) at (0,-1.5) {E(-3,1)};
  \node (n5) at (2,-1.5) {E(-7,2)};
  \node (n6) at (4,-1.5) {E(-7,1)};
  \draw (n1)--(n2);
   \draw (n2)--(n3);
   \draw (n3)--(n6);
     \draw (n4)--(n5);
      \draw (n5)--(n6);
      \draw (n1)--(n4);
\end{tikzpicture}
\end{table}
Then rB$(\mathbb{F}_{53},0)=6$. The five non-principal ideal classes in ring class group $cl(\mathcal{O})$ can be represented by one ideal of norm 2, two ideals of norm 3 and two ideals of norm 6.
\end{exam}

\section*{References}
\bibliographystyle{plain}
\bibliography{bibfilelorry}
\section*{Appendix A. The discussion about the index problem in field with characteristic 0.}
Consider two elliptic curves $E_1,E_2$ defined over a perfect field $F$ with characteristic 0, then the endomorphism rings of them are $\mathbb Z$ or a imaginary quadratic order\cite[Corollary \uppercase\expandafter{\romannumeral3}.9.4]{silverman2009arithmetic}. If Hom$(E_1,E_2)\ne 0$ and rank$_{\mathbb{Z}}(\rm{End}\it E_{\rm2})=\rm 1$, then rank$_{\mathbb{Z}}(\rm{End}\it E_{\rm1})=\rm 1$ and Hom$(E_1,E_2)=\mathbb Z \beta_0$  where $\beta_0\in \rm{Hom}(\it E_{\rm 1}, E_{\rm 2})$. Hence for any isogeny $\beta:E_2\to E_1$, we have
\begin{equation*}
  [\rm{End}(\it E_{\rm 2}):\rm{Hom}(\it E_{\rm 1}, E_{\rm 2})\beta]=(\frac{\rm{deg}\beta}{\rm{deg}\beta_0})^{\frac{\rm1}{\rm2}}\rm{deg}\beta_0.
\end{equation*}
If Hom$(E_1,E_2)\ne 0$ and rank$_{\mathbb{Z}}(\rm{End}\it E_{\rm2})=\rm 2$, then rank$_{\mathbb{Z}}(\rm{End}\it E_{\rm1})=\rm 2$ and End$(E_1)$ has the same endomorphism algebra as End$(E_2)$. If $E_1, E_2$ are defined over some number field, then for any isogeny $\beta:E_2\to E_1$, the index of $\rm{Hom}(\it E_{\rm 1}, E_{\rm 2})\beta$ in $\rm{End}(\it E_{\rm 2})$ has the same results with the ordinary case of elliptic curves defined over a finite field $k$. That is, if $\rm{End}(\it E_{\rm2})=\mathbb Z+\mathbb{Z}f\gamma$ and $[\rm{End}(\it{E}_{\rm{2}}):\rm{End}(\it{E}_{\rm{1}})]=$ $\ell_{\rm 1}^{e'_{\rm 1}}\cdots \ell_s^{e'_s}$ where $e'_i \in \mathbb{Z}_{\ne0}$ for all $i$, then
\begin{equation*}
[\rm{End}(\it{E}_{\rm{2}}):\rm{Hom}(\it{E}_{\rm{1}},\it{E}_{\rm{2}})\beta]=\prod\limits_{\it{i}=\rm 1}^s \ell_i^ {\rho(e'_i)}\rm{deg} \beta.
\end{equation*}

 For an elliptic curve $E$ defined over $\mathbb C$, then there is a lattice $L$ in the form $\mathbb Z+ \mathbb Z \tau$ where $\tau$ lies in the upper half plane
  \begin{equation*}
    \mathcal{H}=\{x+iy\in \mathbb C: y>0\},\  \rm{where}\   \it i^{\rm2}=-\rm1,
  \end{equation*}
such that $E\cong\mathbb C/L$. Given such two lattices $L_1=\mathbb Z+ \mathbb Z \tau_1$ and $L_2=\mathbb Z+ \mathbb Z \tau_2$, consider the isogenies from $\mathbb C/L_1$ to $\mathbb C/L_2$, according to \cite[Theorem \uppercase\expandafter{\romannumeral6}.5.3]{silverman2009arithmetic}, it is equivalent to consider
\begin{equation*}
  \rm{Map}(\it L_{\rm 1},L_{\rm 2})=\{\alpha\in \mathbb C: \alpha L_{\rm 1}\subseteq L_{\rm 2}\}.
\end{equation*}
Let $K$ be an imaginary quadratic field, by some computations we have the following facts:
\begin{enumerate}[1)]
  \item $\mathbb C/(\mathbb Z+ \mathbb Z \tau)$ has the endomorphism ring isomorphic to an order in $K$ if and only if $\tau \in K$.
  \item If $\tau_1\in K$, then Hom$(\mathbb C/L_1, \mathbb C/L_2)\ne 0$ if and only if $\tau_2\in K$.
\end{enumerate}

If $\tau_1$ and $\tau_2$ are not contained in any imaginary quadratic fields, and Hom$(\mathbb C/L_1, \mathbb C/L_2)\ne 0$, then $\mathbb{Q}(\tau_1)=\mathbb{Q}(\tau_2)$. If $\tau_1=a_0+a_1\tau_2$ for some $a,b \in \mathbb Q$, then $\rm{Map}(\it L_{\rm 1},L_{\rm 2})=\mathbb Z m$ form some integer $m$. Otherwise, suppose $[\mathbb{Q}(\tau_2):\mathbb{Q}]=n$, let $\tau_1$ be in the form of $a_0+a_1\tau_2+\cdots+a_{n-1}\tau_2^{n-1}$ and the minimal polynomial of $\tau_2$ be $b_0+b_1x+\cdots+b_{n-1}x^{n-1}+x^n$ where $a_i,b_i\in \mathbb{Q}$ for all $i$, then $\rm{Map}(\it L_{\rm 1},L_{\rm 2})=\mathbb Z\beta $ where $\beta\notin \mathbb Z$ if and only if $ a_{n-1}\ne 0$ and
\begin{equation*}
  \left(
  \begin{array}{cc}
    a_2 & a_1+a_{n-1}b_2 \\
    a_3 & a_2+a_{n-1}b_3 \\
    \vdots & \vdots \\
    a_{n-1}& a_{n-2}+a_{n-1}b_{n-1}
  \end{array}
  \right)
\end{equation*}
has rank $1$.
\section*{Appendix B. An application of Corollary 3.5.}
Let $K$ be an imaginary quadratic field, and $ \mathbb Z [\gamma]$ be its algebraic integer ring. For simplicity, we assume $\gamma\ne \sqrt{-1}, \frac{1+\sqrt{-3}}{2}$  . Arbitrarily given a conductor $f$, for any $\ell \mid f$, we can enumerate the number of invertible or non-invertible ideals of $\ell$-power norms. We will recur to the Corollary 3.5. Since the endomorphism ring of $\mathbb C/(\mathbb Z [f\gamma])$ is $\mathbb Z[ f\gamma]$, then after doing reduction by proper prime ideal , we can get an elliptic curve $E$ defined over a finite field with endomorphism ring $\mathbb Z [f\gamma]$. Since distinct ideals of $\mathbb Z [f\gamma]$ correspond to distinct isogenies from $E$, to enumerate the number of invertible or non-invertible ideals of $\ell$-power norms, it suffices to enumerate the number of the corresponding isogenies in the $\ell$-volcano. Let $D$ be the discriminant of $K$,  iG$(f,\ell^n)$ denote the number of invertible ideals of norm $\ell^n$, and niG$(f,\ell^n)$ denote the number of non-invertible ideals of norm $\ell^n$ in the order with conductor $f$. Then we have the following results.
For the invertible ideals, we have
\begin{equation*}
\rm{iG}(\it f, \ell^n)=\left\{
\begin{array}{ll}
  0&if\ n\ is\ odd\ and\ (\romannumeral1)0<n<2v_{\ell}(f)\ or\ (\romannumeral2)   (\frac{D}{\ell})=-1; \\
  \ell^{\frac{n}{2}} & \ if\ n\ is\ even\ and\ 0< n<2v_{\ell}(f); \\
  \ell^{v_{\ell}(f)-1}(\ell+1) & \ if\ n\geqslant2v_{\ell}(f),\ n\ is\ even\ and\ (\frac{D}{\ell})=-1; \\
  \ell^{v_{\ell}(f)} &\ if\ n\geqslant2v_{\ell}(f)\ and\ (\frac{D}{\ell})=0; \\
  (n-2v_{\ell}(f)+1)\ell^{v_{\ell}(f)-1}(\ell-1) & \ if\ n\geqslant2v_{\ell}(f)\ and\  (\frac{D}{\ell})=1.
\end{array}
\right.
\end{equation*}
 And for the non-invertible ideals, we have
\begin{equation*}
  \rm{niG}(\it f, \ell^n)=\sum\limits_{1\leqslant k\leqslant \rm{min}(\it n,v_{\ell}(f))}\rm{iG}(\it \frac{f}{\ell^k}, \ell^{n-k}),\  where\ \rm{iG}(1,1)=1.
\end{equation*}
\begin{exam}
  Consider the order $\mathbb Z[2^2\sqrt{-2}]$. We list the ideals and the numbers in the following table.
  \begin{table}[H]
\centering
\begin{tabular}[{\ell}]{|c|c|c|c|c|}
\hline
\text{norm}&\text{invertible ideals} & &\text{non-invertible ideals} & \\
\hline
 $2$  &  &$\rm{iG}(2^2, 2)=0$  & $2\mathbb Z +2^2\sqrt{-2}\mathbb Z$ & $\rm{niG}( 2^2, 2)=1$\\

 \hline
 $ 2^2$  & \makecell{$2^2\mathbb Z +(2+2^2\sqrt{-2})\mathbb Z$\\$2\mathbb Z +2^3\sqrt{-2}\mathbb Z$} & $\rm{iG}(2^2, 2^2)=2$  & \makecell{$ 2^2\mathbb Z+2^2\sqrt{-2}\mathbb Z$} & \makecell{$\rm{iG}(2,2)+\rm{iG}(1,1)$\\=0+1=1} \\

 \hline
 $2^3$  & &$\rm{iG}(2^2, 2^3)=0$  &\makecell{$ 2^2\mathbb Z+2^3\sqrt{-2}\mathbb Z$\\$ 2^3\mathbb Z+(2^2+2^2\sqrt{-2})\mathbb Z$\\$2^3\mathbb Z +2^2\sqrt{-2}\mathbb Z$}& \makecell{$\rm{iG}(2,2^2)+\rm{iG}(1,2)$\\=2+1=3}\\

 \hline
 $2^4$  &\makecell{$2^2\mathbb Z +2^4\sqrt{-2}\mathbb Z$\\$2^3\mathbb Z +(2^2+2^3\sqrt{-2})\mathbb Z$\\$ 2^4\mathbb Z+\mathbb Z(2^2+2^2\sqrt{-2})$\\$2^4\mathbb Z +(2^23+2^2\sqrt{-2})\mathbb Z$} & $\rm{iG}(2^2, 2^4)=4$  & \makecell{$2^3\mathbb Z +2^3\sqrt{-2}\mathbb Z$\\$ 2^4\mathbb Z+2^2\sqrt{-2}\mathbb Z$\\$2^4\mathbb Z +(2^3+2^2\sqrt{-2})\mathbb Z$}& \makecell{$\rm{iG}(2,2^3)+\rm{iG}(1,2^2)$\\=2+1=3}\\

 \hline
 $2^5$  & \makecell{$2^5\mathbb Z +2^2\sqrt{-2}\mathbb Z$\\$2^5\mathbb Z +(2^3+2^2\sqrt{-2})\mathbb Z$\\$2^5\mathbb Z +(2^4+2^2\sqrt{-2})\mathbb Z$\\$2^5\mathbb Z +(2^33+2^2\sqrt{-2})\mathbb Z$} & $\rm{iG}(2^2, 2^5)=4$  & \makecell{$2^3\mathbb Z +2^4\sqrt{-2}\mathbb Z$\\$2^4\mathbb Z +(2^3+2^3\sqrt{-2})\mathbb Z$\\$2^4\mathbb Z +2^3\sqrt{-2}\mathbb Z$} & \makecell{$\rm{iG}(2,2^4)+\rm{iG}(1,2^3)$\\=2+1=3}\\

 \hline

\end{tabular}\\

\end{table}

\end{exam}
\end{document}